\theoremstyle{plain}
\newtheorem{thm}{Theorem}[section] 
\newtheorem{defn}[thm]{Definition}
\newtheorem{lem}[thm]{Lemma}
\newtheorem{prop}[thm]{Proposition}
\newtheorem{cor}[thm]{Corollary}
\newtheorem{rem}[thm]{Remark}
\newcommand{\plus}{\,+\,}
\providecommand{\ind}{\mathds{1}} 
\providecommand{\sm}{\setminus} 
\providecommand{\N}{\mathbb{N}}
\providecommand{\R}{\mathbb{R}}
\providecommand{\T}{\mathbb{T}}  
\providecommand{\Z}{\mathbb{Z}}
\providecommand{\bS}{\mathbb{S}}  
\providecommand{\C}{\mathbb{C}}
\providecommand{\cA}{\mathcal{A}}
\providecommand{\cL}{\mathcal{L}}
\providecommand{\cM}{\mathcal{M}}
\providecommand{\cNP}{\mathcal{N}_{\mathcal{P}}}
\providecommand{\eps}{\varepsilon}
\providecommand{\ov}{\overline}
\providecommand{\un}{\underline}
\providecommand{\wto}{\rightharpoonup}
\providecommand{\weak}{\rightharpoonup}
\providecommand{\skp}[2]{\langle#1,#2\rangle}
\providecommand{\les}{\lesssim}
\providecommand{\ges}{\gtrsim}
\DeclareMathOperator{\supp}{supp}
\DeclareMathOperator{\id}{id}
\begin{document}

\allowdisplaybreaks

\title[Ground state solutions to nonlinear wave equations]{Ground state solutions to generalized nonlinear wave equations with infinite-dimensional kernel}

\author{Rainer Mandel\textsuperscript{1}}
\author{Tobias Weth\textsuperscript{2}}
\address{\textsuperscript{1}Karlsruhe
Institute of Technology, Institute for Analysis, Englerstra{\ss}e 2, 76131 Karlsruhe, Germany}
\address{\textsuperscript{2}Institut f\"ur Mathematik, Goethe-Universit\"at Frankfurt, 60629 Frankfurt am
Main}
  

\keywords{}
\date{\today}   

\begin{abstract}
  The present paper is devoted to existence results for time-periodic solutions of generalized nonlinear wave
  equations in a closed Riemannian manifold $M$. Our main focus lies on the doubly degenerate setting where
  the associated generalized wave operator has an infinite dimensional kernel and the nonlinearity may vanish
  on open subsets of $M$. To deal with this setting, we apply a direct variational approach based on a new
  variant of the nonlinear saddle point reduction to the associated Nehari-Pankov set.
  This allows us to find ground state solutions and to characterize the associated ground state energy by a
  fairly simple minimax principle.
\end{abstract}

\maketitle
\allowdisplaybreaks 
\setlength{\parindent}{0cm}

\section{Introduction}
\label{sec:introduction}
The present paper is devoted to a class of generalized nonlinear wave equations of the form
\begin{equation}
  \label{eq:nonlinear-wave-intro}
\cA u + \partial_{tt} u = q(x,t)|u|^{p-2}u,\qquad x \in M,\quad t \in \bS^1,  
\end{equation}
where $\bS^1$ is the unit circle and $(M,g)$ is a closed Riemannian manifold. 

We mostly consider differential operators of the form $\cA = P(-\Delta)$ where $-\Delta = -\Delta_g$ is
the Laplace-Beltrami operator on $(M,g)$ and $P$ is a polynomial $P$ with real coefficients and $P(\tau) \to \infty$ as $\tau \to \infty$. By assuming $t \in \bS^1$, we restrict our attention to solutions of
(\ref{eq:nonlinear-wave-intro}) which are $2\pi$-periodic in time.  Moreover, we assume that $q \in L^\infty(M \times \mathbb{S}^1)$ and $p>2$. Equations of the form~(\ref{eq:nonlinear-wave-intro}) have received extensive attention in recent years, and they are known under different names depending on the specific form of the generalized wave operator $\cA+ \partial_{tt}$. In the case $\cA=-\Delta$, we are dealing with a class of superlinear wave equations, while (\ref{eq:nonlinear-wave-intro})  is called a nonlinear Klein-Gordon equation with mass $c> 0$ in the case where $\cA=-\Delta + c$. In the case $\cA = \Delta^2$, equation (\ref{eq:nonlinear-wave-intro}) is called a nonlinear beam equation 
if $M= \bS^1$ and a {nonlinear} membrane equation if $M$ is two-dimensional, see e.g.
\cite{BaDi_PeriodicSolutions} and the references therein.

From a functional analytic point of view, it is convenient to consider (\ref{eq:nonlinear-wave-intro}) in a more general framework of metric measure spaces. So in the following we assume that $(M,d,\mu)$ is a metric measure space, i.e., $(M,d)$ is a compact metric space and $\mu$ is a Borel measure
on $M$. Furthermore, we assume that the operator $\cA: D(\cA) \subset L^2(M) \to L^2(M)$ in (\ref{eq:nonlinear-wave-intro}) satisfies the following assumptions:
\begin{itemize}
  \item[(A)] $\cA$ is a selfadjoint operator with compact resolvent and
    \begin{itemize}
    \item[(A1)] $\inf \limits_{\phi \in D(\cA) \setminus \{0\}}\frac{\skp{\mathcal A
    \phi}{\phi}}{\skp{\phi}{\phi}}>-\infty$,
    \item[(A2)] $\sup \bigl\{ \frac{\skp{\mathcal A \phi}{\phi}}{\skp{\phi}{\phi}}\::\: \phi \in D(\cA)
    \setminus \{0\},\; \phi \equiv 0 \; \text{in }M \setminus U\bigr\} = \infty\:$ for any open subset
    $\varnothing \not= U \subset M$,
    \item[(A3)] $\phi \equiv 0 \text{ in } U$ implies $\cA\phi\equiv 0$ in $U$ 
    for any open subset $\varnothing \not= U \subset M$.
    \end{itemize}
\end{itemize}
We stress that $(A)$ is satisfied in the setting specified above, i.e., when $(M,g)$ is a closed Riemannian
manifold with induced measure $\mu_g$ and $\cA = P(-\Delta)$ with a real polynomial with $P(\tau) \to \infty$ as $\tau \to \infty$.

As before we let $q \in L^\infty(M \times \mathbb{S}^1)$, where, here and in the following, the spaces $L^p(M\times \mathbb{S}^1)$,
$1 \le p \le \infty$ are defined with respect to the product measure of $\mu$ with the one-dimensional 
Lebesgue measure on $\mathbb{S}^1$.

As a consequence of assumption (A), the spectrum of $\cA$ consists of an unbounded
sequence of eigenvalues 
$$
\nu_0(\cA) \le \nu_1(\cA) \le \dots \le \nu_k(\cA) \le \dots \to +\infty
$$
(counted with multiplicity), and there exists an orthonormal basis of $L^2(M)$ of associated eigenfunctions
$\zeta_k\in D(\cA)$, $k \in \N_0:= \N\cup\{0\}$. As shown in  Proposition~\ref{prop-self-adjoint-realization}
below, the generalized wave operator $L_\cA:= \cA + \partial_{tt}$ then has a self adjoint realization in
the Hilbert space $L^2(M \times \mathbb{S}^1)$ with spectrum 
$$
  \sigma(L_\cA) = \{\nu_k(\cA)- l^2 : l,k \in \N_0\}.  
$$
The corresponding eigenfunctions are of the form
$$
(x,t) \mapsto (a_1 \cos(lt)+ a_2 \sin(lt))\zeta_k(x),
$$
where $a_1, a_2 \in \R$, and $\zeta_k$ is an eigenfunction of $\cA$ on $M$ corresponding to the eigenvalue
$\nu_k(\cA)$. Hence we observe the following:
 \begin{equation}
   \label{eq:square-observatoin}
   \begin{aligned}
  &\textit{If $\nu_k(\cA)$ is a square for infinitely many $k \in \N_0$,}\\
 &\textit{then  the kernel $\ker(L_\cA)$ of $L_\cA$ is
infinite dimensional.}     
   \end{aligned}
\end{equation}
While (\ref{eq:nonlinear-wave-intro}) has the structure of a strongly indefinite variational problem and therefore allows the application of tools from critical point theory, the presence of an infinite-dimensional kernel of the generalized wave operator $L_\cA$ leads to severe technical and conceptual difficulties and is therefore excluded by assumption in most of the available literature. To discuss these difficulties in an abstract framework, let us consider a real Hilbert space $(E,\|\cdot\|)$ with an orthogonal splitting $E= E^+ \oplus E^0  \oplus E^-$, so every $u \in E$ writes in a unique way as $u = u^+ + u^0 + u^-$ with $u^\pm \in E^\pm$ and $u^0 \in E^0$. Under suitable additional assumptions, (\ref{eq:nonlinear-wave-intro}) can be written as the Euler Lagrange equation of a functional $\Phi \in C^1(E,\R)$ of the form 
\begin{equation}
  \label{eq:general-functional-intro}
\Phi(u) = \|u^+\|^2-\|u^-\|^2 - I(u),
\end{equation}
where $I \in C^1(E, \R)$ is superquadratic in a sense to be specified below. Hence solutions of (\ref{eq:nonlinear-wave-intro}) can be found as critical points of $\Phi$. Of particular
interest are {\em ground state solutions}, i.e., $\Phi$-minimizers within the set of non-zero critical points of $\Phi$.  
A key role in the search of these least energy nonzero critical points of $\Phi$ is played by the associated {\em Nehari-Pankov set}\footnote{This set is also called the Nehari-Pankov manifold, but additional restrictions are needed to guarantee that $\cNP$ is a manifold, see \cite{pankov-2005}.}
$$  
\cNP:= \{u \in E \setminus (E^{-}\oplus E^0) \::\: \Phi'(u)\Big|_{E^- \oplus E^0 \oplus \R u}= 0\}. 
$$
which contains all critical points of $\Phi$. Indeed, in the case where $\dim E^0< \infty$, the following properties have been shown in \cite{szulkin-weth-2009,SzuWet} for a fairly large class of nonlinear potentials $I$:
\begin{itemize}
\item[(i)] The minimum of $\Phi$ on $\cNP$ is attained, and every minimizer is a least energy nonzero critical
point of $\Phi$;
\item[(ii)] for every $u \in E \setminus (E^- \oplus E^0)$, the half space $E^- \oplus E^0 + \R^+ u$ intersects $\cNP$ in precisely one point which is the unique global maximum of $\Phi\big|_{E^- \oplus E^0 + \R^+ u}$.
\end{itemize}
In the case where $E^0 = \{0\}$, property (i) has already been proved independently in \cite{pankov-2005}
in a setting related to periodic Schrödinger equations and in \cite{ramos-tavares-2008} in the framework of an elliptic Hamiltonian system. We point out that property (ii) gives rise to a nonlinear saddle point reduction of the functional $\Phi$ and implies, in particular, the relatively simple minimax characterization
$$
c = \inf_{w \in E^+ \setminus \{0\}}\; \sup_{u \in E^- \oplus E^0 + \R^+ w}\Phi(u)
$$
of the minimal nonzero critical value $c:= \min \Phi\big|_{\cNP}$, also known as ground state energy.  

In the case where $\dim E^0= \infty$, the properties (i) and (ii) above are much more difficult to obtain. In fact we are only aware of the work of Bartsch and Mederski \cite{bartsch-mederski-2015,bartsch-mederski-2017-JFA,bartsch-mederski-2017-JFPT} where a saddle point reduction to the Nehari-Pankov set $\cNP$ has been developed and applied in the context of nonlinear Maxwell equations.
As can be seen from their work, drastic changes are needed to deal with this indefinite and strongly degenerate variational
setting. In particular, one is forced to leave the Hilbert space framework, since one needs to replace $E^0$ by a dense subspace endowed with a non-Hilbertian norm adapted to the nonlinear potential $u \mapsto I(u)$. 

In the case where the weight function $q$ is bounded below by
a positive constant, we could in principle apply the abstract theory in \cite{bartsch-mederski-2017-JFPT} to the generalized wave equation (\ref{eq:nonlinear-wave-intro}). However, a different approach in a modified setting is needed to include the case where $q$ has a nontrivial zero set. In fact, as we shall discuss in Remark~\ref{comparison-remark}(ii)
below, a key abstract assumption in \cite{bartsch-mederski-2017-JFPT} is not satisfied in this doubly
degenerate case, which also prevents the application of a classic dual variational approach in the space
$L^p(M)$ as in \cite{BreCorNir}, see also \cite[Chapter I.6]{Struwe}.
In Section~\ref{sec:an-abstr-exist} below, we therefore
present such an alternative approach to a nonlinear saddle point reduction under a different set of
assumptions. This approach leads rather directly to the existence of ground states of $\Phi$.

To present our main results for (\ref{eq:nonlinear-wave-intro}), we need to cast this equation in the abstract variational framework outlined above. In the following, we put $H:= L^2(M \times \bS^1)$, we let
$P^\pm \in \cL(H)$ denote the spectral projections associated with the positive and negative part of the
spectrum $\sigma(L_\cA)$, and we let $P^0 \in \cL(H)$ be the orthogonal projection on the kernel $\ker(L_\cA)$ of $L_\cA$. So we have $P^+ + P^- + P^0 = \id_H$. To shorten the notation, we also set $u^\pm:= P^\pm u$ and $u^0:= P^0u$ for $u \in H$. 
We then consider the subspaces $E^\pm \subset H$ given by
 $$
 E^\pm := \Bigl \{u \in P^\pm H \::\: \|u\|_\pm < \infty \Bigr\},
 $$
 where, here and in the following,
 $$
 \|u\|_{+} := \Bigl(  \int_{(0,\infty)} \lambda \,d \langle E(\lambda)u,u \rangle_H  \Bigr)^{1/2},
 \qquad \|u\|_{-} := \Bigl(  \int_{(-\infty,0)} |\lambda|\,d \langle E(\lambda)u,u \rangle_H  \Bigr)^{1/2}
$$
and $\lambda \mapsto E(\lambda)$ denotes the spectral resolution associated with the operator $L_\cA$ in
$H$.
 It is easy to see that $(E^\pm,\|u\|_{\pm})$ are pre-Hilbert spaces with scalar products $\langle \cdot,
 \cdot\rangle_\pm$ given by 
 $$
 \langle u,v \rangle_+ := \int_{(0,\infty)} \lambda \,d\langle  E(\lambda)u,v \rangle_H, \qquad 
 \langle u,v \rangle_-= \int_{(-\infty,0)} |\lambda| \,d\langle E(\lambda)u,v \rangle_H .
 $$
Recalling that we consider a fixed exponent
 $p>2$ in (\ref{eq:nonlinear-wave-intro}), we now make the following assumptions:
 \begin{itemize}
 \item[$(CE)_p$] {\em (Compact embedding)} The spaces $(E^\pm,\|\cdot\|_\pm)$ are 
 compactly embedded into $L^p(M \times \mathbb{S}^1)$.
 \item[$(CC)_q$] ({\em $q-$control condition}) There exists a constant $C>0$ with
   $$
 \int_{M\times \mathbb{S}^1} |u|^2\,d(x,t)  
   \le C \int_{M \times \mathbb{S}^1} q(x,t)|u|^2\,d(x,t) \qquad \text{for all }u \in \ker(L_\cA).
   $$
 \end{itemize}
Here and in the following, it will be convenient to let $dx$ refer to integration with respect to the measure
$\mu$ on $M$ and the symbol $d(x,t)$ refer to integration with respect to the product measure on $M\times
\bS^1$.  Since $\mu(M)< \infty$ by our assumptions on $M$, condition $(CE)_p$ implies that the spaces $E^\pm$
are compactly embedded into $L^2(M \times \mathbb{S}^1)$ as well. In particular, it ensures that 
the set of {\em nonzero} eigenvalues of $L_\cA$ has no accumulation points in $\R$ and that 
all nonzero eigenvalues have finite multiplicity. From this we deduce that the spaces
$(E^\pm,\|\cdot\|_{\pm})$ are complete, so they are Hilbert spaces.  Moreover, condition $(CC)_q$ allows us to
introduce a topology on $\ker(L_\cA)$ adapted to the nonlinearity in (\ref{eq:nonlinear-wave-intro}). For
this we define, for any given $u\in H$, 
$$ 
  \|u\|_0 :=  \Bigl(\int_{M \times \mathbb{S}^1} q(x,t)|u^0|^p\,d(x,t)\Bigr)^{\frac{1}{p}}.
$$
We note that $(CC)_q$ implies that $\{u \in \ker(L_\cA) \::\: \|u\|_0<\infty\}$ is a subspace of
$\ker(L_\cA)$ and hence of $L^2(M\times\mathbb{S}^1)$, and that $\|\cdot\|_0$ is a norm on this space.
Moreover, $(CC)_q$ also implies that the completion of this space with respect to $\|\cdot\|_0$ is again a subspace of $\ker(L_\cA)$, and we denote this completion by $E^0$. Then 
$$ 
  E := E^-\oplus E^0\oplus E^+ 
   = \Bigl \{u \in H\::\: \|P^\pm u\|_\pm < \infty \quad \text{and}\quad \int_{M \times \mathbb{S}^1} q(x,t)|u|^p\,d(x,t) < \infty \Bigr\} 
$$
is a Banach space endowed with the norm $\|\cdot\|$ given by
 $$
  \|u\|^2 := \|P^+ u\|_+^2 + \|P^- u\|_-^2 + \|P^0u\|_{{0}}^2.
$$
We can now define the notion of weak solutions $u \in E$ of (\ref{eq:nonlinear-wave-intro}). 

\begin{defn}
  \label{def-weak-solution}
  A function $u \in E$ will be called a {\em weak solution of (\ref{eq:nonlinear-wave-intro}) on $M \times \mathbb{S}^1$} if
  \begin{equation*}
    \langle P^+ u , P^+ v \rangle_+ - \langle P^- u , P^- v \rangle_- 
    = \int_{M \times \mathbb{S}^1} q(x,t)|u|^{{p-2}}u v \,d(x,t) \quad \text{for every $v \in E$.}
  \end{equation*}
\end{defn}
Note that regular functions $u\in D(L_{\cA})\cap E$ satisfy
\begin{equation} \label{eq:SKP_vs_operator}
  \langle P^+u ,P^+v \rangle_+ -\langle P^-u,P^-v \rangle_{-}
  = \skp{L_{\cA}u}{v}_H
  = \int_{M\times\mathbb{S}^1} (L_{\cA}u)v\,d(x,t). 
\end{equation}
So weak solutions of (\ref{eq:nonlinear-wave-intro}) are critical points of the energy functional
$$
\Phi \in C^1(E,\R), \qquad \Phi(u) = \frac{1}{2}\Bigl(\|P^+u\|_+^2 - \|P^-u\|_-^2\Bigr) - \frac{1}{p} \int_{M \times \mathbb{S}^1} q(x,t)|u|^p\,d(x,t).
$$
As before, a $\Phi$-minimizer within the set of nontrivial weak solutions of (\ref{eq:nonlinear-wave-intro})
will be called a ground state solution of (\ref{eq:nonlinear-wave-intro}) on $M \times \mathbb{S}^1$.
Our main existence result in this setting is the following.

\begin{thm}
\label{thm-general-hyperbolic-intro}
  Let $\cA$ be a selfadjoint operator in $L^2(M)$ satisfying the assumption (A).
  Suppose that $p>2$ and $q \in L^\infty(M \times \mathbb{S}^1)$, $q \ge 0$ are chosen with the
  properties that $q>0$ on some open subset of $M \times \mathbb{S}^1$ and that conditions $(CE)_p$ and $(CC)_q$ hold.
  Then (\ref{eq:nonlinear-wave-intro}) admits a ground state solution on $M
  \times \mathbb{S}^1$. 
\end{thm}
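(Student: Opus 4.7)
My plan is to apply the abstract nonlinear saddle-point reduction theorem developed in Section~\ref{sec:an-abstr-exist} to the functional $\Phi$ on the Banach space $E$. The proof then reduces to verifying the abstract hypotheses for the concrete potential
$$
I(u) := \frac{1}{p}\int_{M \times \mathbb{S}^1} q(x,t) |u|^p\,d(x,t),
$$
so my first step is to record its relevant analytic properties: $I$ is $C^1$, convex, $p$-homogeneous, and weakly lower semicontinuous on $E$; moreover $\|u\|_0^p = p\, I(u^0)$, so the topology of $E^0$ is precisely the one needed to make $I$ continuous in its kernel component. The condition $(CC)_q$ guarantees both that $\|\cdot\|_0$ is a genuine norm on the $\ker(L_\cA)$-subspace on which it is defined and that $E^0 \hookrightarrow L^2(M \times \mathbb{S}^1)$.

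The main geometric step is the Nehari-Pankov parametrization. For every $w \in E^+ \setminus \{0\}$, I would show that $\Phi$ restricted to the half-space $E^- \oplus E^0 + \R^+ w$ admits a unique maximum $m(w) \in \cNP$, so that $w \mapsto m(w)$ parametrizes $\cNP$ (and $m(tw)=m(w)$ for $t>0$). Existence of a maximizer follows from the fact that $\Phi$ tends to $-\infty$ along any unbounded sequence in the half-space together with weak upper semicontinuity coming from $(CE)_p$ and convexity of $I$. Uniqueness reduces to strict concavity of $\Phi$ along line segments in the half-space, which in turn follows from strict convexity of $s \mapsto |s|^p$ on the set $\{q>0\}$, combined with the fact, guaranteed by $(CC)_q$, that distinct elements of $E^- \oplus E^0 + \R^+ w$ cannot coincide almost everywhere on $\{q>0\}$.

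Given this reduction, the ground state energy $c := \inf_{u \in \cNP} \Phi(u) = \inf_{w \in E^+ \setminus \{0\}} \Phi(m(w))$ is strictly positive by the linking estimate $\Phi(u) \ge \tfrac{1}{2}\|u\|_+^2 - C\|u\|_+^p$ on $E^+$ coming from $(CE)_p$. Ekeland's variational principle applied to the reduced functional $w \mapsto \Phi(m(w))$ on the unit sphere of $E^+$ then produces a Palais-Smale sequence $(u_n) \subset \cNP$ at level $c$. Combining $\Phi(u_n) \to c$ and $\Phi'(u_n) \to 0$ with the Nehari-Pankov relations tested against $u_n^\pm$ and $u_n^0$ yields boundedness of $(u_n)$ in $E$; after passing to a weak limit $u \in E$, the compact embedding $(CE)_p$ gives strong convergence of the $E^\pm$-components in $L^p(M \times \mathbb{S}^1)$, which together with the convexity properties of $I$ on $E^0$ is enough to show that $u$ is a critical point with $\Phi(u) \ge c > 0$, and hence a ground state.

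The principal obstacle is the interplay between the infinite-dimensional kernel $E^0$ and the possible vanishing of $q$. In this doubly degenerate regime $E^0$ is merely a Banach, not Hilbert, space, topologized by a non-Hilbertian norm $\|\cdot\|_0$ which ``sees'' only the set $\{q>0\}$; consequently the convexity and compactness arguments that would be routine in a Hilbert space require careful replacement. The key structural input is the combination of $(CC)_q$, which forces $\|\cdot\|_0$ to be nondegenerate inside $\ker(L_\cA)$, with the locality-type assumption (A3) on $\cA$; together these rule out loss of mass of minimizing sequences along directions in $E^0$ which $\Phi$ detects only through the weight $q$, and thereby secure both the uniqueness in the Nehari-Pankov parametrization and the convergence of the Palais-Smale sequence to a nontrivial limit.
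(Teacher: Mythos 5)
Your overall plan — verify the hypotheses of the abstract saddle-point theorem of Section~\ref{sec:an-abstr-exist} for the concrete $\Phi$ and $I$ — matches the paper's architecture, and you correctly identify $(CE)_p$, $(CC)_q$ and (A3) as the key structural inputs. Two of the load-bearing steps, however, do not go through as stated. In the Nehari--Pankov parametrization, you claim that for \emph{every} $w \in E^+ \setminus \{0\}$ the restriction of $\Phi$ to $E_w$ has a unique maximum, with existence from coercivity and uniqueness from strict concavity. Neither holds: in this degenerate setting one may have $\sup_{E_w}\Phi = +\infty$, which is precisely why the paper introduces $\cM^+ = \{w : \sup_{E_w}\Phi < \infty\}$ and must prove $\cM^+ \neq \emptyset$ (condition $(S2)$) from (A3) via Propositions~\ref{existence-weighted-eigenvalue} and \ref{Mplus-criterion}; and $\Phi|_{E_w}$ is not concave, since along $t \mapsto tw$ one has $\frac{d^2}{dt^2}\Phi(tw) = \|w\|^2 - (p-1)t^{p-2}\int q|w|^p > 0$ for small $t > 0$. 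Uniqueness is obtained instead from the Szulkin--Weth identity \eqref{eq:szulkin-weth-lemma} together with the monotonicity $(I0)$ and $(CC)_q$ (Proposition~\ref{prop:criterionB2I}); this is not a concavity argument.

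The second, deeper gap is in the convergence step. Your Ekeland/Palais--Smale route requires passing to the limit in $\Phi'(u_n)[v] \to 0$ for all $v$, in particular for $v \in E^0$, which needs $q|u_n|^{p-2}u_n \to q|u|^{p-2}u$ in $L^{p'}$; but the $E^0$-component of the bounded PS sequence only converges \emph{weakly} in $E^0$ (i.e.\ weakly in $L^p(q\,d(x,t))$), and ``convexity of $I$ on $E^0$'' does not upgrade this. The paper sidesteps PS sequences entirely: Proposition~\ref{prop:conclusion} minimizes the reduced functional $\Psi = \Phi \circ \hat m$ directly on $M^+$, using only weak convergence of $w_n$ in $E^+$ and the strong continuity $(I2)$ of $I$ under weak $E^+$-perturbations (a consequence of $(CE)_p$), so that no compactness in $E^0$ is needed at that stage. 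Strong convergence of the $E^0$-part is recovered only \emph{a posteriori}, and only along sequences for which $w_n$ converges strongly, via the strict lower semicontinuity $(I3)$ and uniform convexity of $E^-$ in Proposition~\ref{prop:hatm}; this is what makes $\Psi \in C^1(\cM^+)$ (Proposition~\ref{prop:Derivative}) and lets Proposition~\ref{prop:groundstate} conclude that a minimizer of $\Psi$ is a ground state. An Ekeland argument on $M^+$ would already require these regularity facts about $\Psi$ to be formulated, at which point the direct minimization finishes the proof without it.
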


We note here that it already follows from $(CC)_q$ that $q$ must be positive on a set of positive measure,
but not necessarily on an open set. Clearly, this holds if $q$ is continuous on $M
\times \bS^1$.
While our assumptions do not require $E^0$ to be infinite dimensional or even nontrivial, we wish to
concentrate on applications to the most difficult case where $\dim E^+ = \dim E^0 =\dim E^- = \infty$ in the following. To demonstrate
the applicability of Theorem~\ref{thm-general-hyperbolic-intro}, we discuss some specific examples. The
respective assumptions on the exponent $p$ and the coefficient function $q$ arise from the verification of
the contions $(CE)_p,(CC)_q$ that we will present later.
We first consider the case where $M= \mathbb{S}^1$, so we consider solutions of
(\ref{eq:nonlinear-wave-intro})  which are $2\pi$-periodic in time and space. Explicit computations for the
classical wave operator in one spatial dimension reveal the following.  

\begin{thm}  \label{thm-wave-1+1}
  Let $M= \mathbb{S}^1$, $\cA=-\Delta$ and $2<p<\infty$.  Moreover, suppose that $q \in L^\infty(\mathbb{S}^1 \times \mathbb{S}^1)$ is
  nonnegative and satisfies
    \begin{equation}  \label{eq:q-con-wave-1+1}
     \inf_{\Omega} q  >0 \qquad 
     \begin{aligned}
       &\text{for a subset $\Omega \subset \mathbb{S}^1 \times \mathbb{S}^1$ containing a rectangle }\\
&\text{$[a_1,b_1]\times [a_2,b_2]$ with 
       $b_1+b_2-a_1-a_2 > 2\pi.$}        
   \end{aligned}
 \end{equation}
  Then (\ref{eq:nonlinear-wave-intro}) admits a ground state solution on $\mathbb{S}^1 \times \mathbb{S}^1$.
  In particular, this holds for $\Omega = \omega\times \mathbb S^1$ or $\Omega = \mathbb S^1\times \omega$
  where $\omega\subset \mathbb S^1$ is an  arbitrary nonempty open set.
\end{thm}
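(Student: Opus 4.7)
The plan is to verify, in the concrete 1+1 setting $M=\mathbb{S}^1$, $\cA=-\Delta$, the three hypotheses of Theorem~\ref{thm-general-hyperbolic-intro}: (A), $(CE)_p$ and $(CC)_q$. Assumption (A) is classical for the Laplacian on $\mathbb{S}^1$: the spectrum is $\{k^2:k\in\N_0\}$ with trigonometric eigenfunctions, and (A1)--(A3) are immediate. Consequently $L_\cA=-\partial_{xx}+\partial_{tt}$ has spectrum $\{k^2-l^2:k,l\in\N_0\}$, and in the characteristic coordinates $\xi=x+t,\ \eta=x-t$ its kernel is explicitly
\begin{equation*}
\ker(L_\cA)=\{u(x,t)=f(x+t)+g(x-t):f,g\in L^2(\mathbb{S}^1)\},
\end{equation*}
with a unique decomposition once we impose the normalization $\int_{\mathbb{S}^1}g=0$. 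For $(CE)_p$ with $p\in(2,\infty)$, I would invoke the classical Brezis--Coron--Nirenberg $L^p$-theory of the 1+1 periodic wave equation: on the orthogonal complement of $\ker(L_\cA)$ the reduced resolvent of $L_\cA$ is sufficiently smoothing that, combined with Rellich-type compactness in each of the characteristic variables, one obtains the compact embedding $E^\pm\hookrightarrow L^p(\mathbb{S}^1\times\mathbb{S}^1)$ for every finite $p>2$. This can be derived directly from the Fourier characterization $\|u\|_\pm^2=\sum|k^2-l^2|\,|\widehat u(k,l)|^2$ or cited.

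The heart of the proof is the verification of $(CC)_q$. Since $q\geq c_0:=\inf_\Omega q>0$ on the rectangle $P=[a_1,b_1]\times[a_2,b_2]$, it suffices to establish the observability-type inequality
\begin{equation*}
\int_{\mathbb{S}^1\times\mathbb{S}^1}|u|^2\,d(x,t)\;\leq\;C\int_P|u|^2\,dx\,dt\qquad\text{for every }u\in\ker(L_\cA).
\end{equation*}
Writing $u=f(x+t)+g(x-t)$ with $\int_{\mathbb{S}^1}g=0$, Fourier-orthogonality of the right- and left-moving modes gives $\|u\|_{L^2(\mathbb{S}^1\times\mathbb{S}^1)}^2=2\pi(\|f\|_{L^2}^2+\|g\|_{L^2}^2)$. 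Passing to characteristic coordinates, the rectangle $P$ maps to a parallelogram whose projections onto both the $\xi$- and $\eta$-axes are intervals of length $L:=b_1+b_2-a_1-a_2$. The assumption $L>2\pi$ is precisely what forces the periodized slice-length functions $A(\xi):=|\{(x,t)\in P:x+t\equiv\xi\ \mathrm{mod}\ 2\pi\}|$ and $B(\eta):=|\{(x,t)\in P:x-t\equiv\eta\ \mathrm{mod}\ 2\pi\}|$ to be continuous, $2\pi$-periodic, and strictly positive on $\mathbb{S}^1$, hence uniformly bounded below by a positive constant $c_1$. Splitting
\begin{equation*}
\int_P|u|^2\,dx\,dt=\int_{\mathbb{S}^1}|f|^2A\,d\xi+\int_{\mathbb{S}^1}|g|^2B\,d\eta+2\,\mathrm{Re}\int_P f(x+t)\overline{g(x-t)}\,dx\,dt,
\end{equation*}
the diagonal contributions already dominate $c_1(\|f\|^2+\|g\|^2)$. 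The main obstacle is controlling the cross-interaction between the right- and left-moving pieces of $u$, since a careless Cauchy--Schwarz bound only reproduces the diagonal and therefore cannot be absorbed. I would proceed by a direct Fourier expansion: the cross-term bilinear symbol is essentially the Fourier transform of the periodized indicator of $P$ in $(\xi,\eta)$-coordinates, whose coefficients decay at infinity and, thanks to $L>2\pi$ and the mean-zero constraint on $g$, stay strictly below the diagonal thresholds; a quantitative version of this then absorbs the cross term into a fraction of the diagonal, yielding the required coercive inequality.

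Once (A), $(CE)_p$ and $(CC)_q$ are verified, Theorem~\ref{thm-general-hyperbolic-intro} directly delivers the ground state solution on $\mathbb{S}^1\times\mathbb{S}^1$. The \emph{in particular} statement is then immediate: for $\Omega=\omega\times\mathbb{S}^1$ with $\omega\subset\mathbb{S}^1$ an arbitrary nonempty open set, choose any non-degenerate subinterval $[a_1,b_1]\subset\omega$ and let $[a_2,b_2]=[0,2\pi]$, giving $b_1+b_2-a_1-a_2=(b_1-a_1)+2\pi>2\pi$; the case $\Omega=\mathbb{S}^1\times\omega$ is handled symmetrically.
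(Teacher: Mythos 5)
Your overall plan is the same as the paper's: reduce to the abstract result Theorem~\ref{thm-general-hyperbolic-intro} by checking (A), $(CE)_p$ and $(CC)_q$. The paper does this by citing Corollary~\ref{cor-compact-embedding-m-even-S-N} (with $N=m=1$) for $(CE)_p$ and Theorem~\ref{thm:CCq_for_wave1+1} with its subsequent corollary for $(CC)_q$. Your verification of $(CE)_p$ by the Fourier characterization $\|u\|_\pm^2=\sum|k^2-l^2||\widehat u(k,l)|^2$ and Hausdorff--Young--type arguments is a slightly different route (closer to Brezis--Coron--Nirenberg) than the paper's Sogge-based estimate, but for $M=\mathbb S^1$ the two reduce to essentially the same computation; that part is fine.

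The genuine issue is in your treatment of $(CC)_q$. You correctly recognize, as the paper does, that this reduces to an observability estimate for d'Alembert waves $u=f(x+t)+g(x-t)$, and that the slice-length functions $A(\xi),B(\eta)$ of the rectangle are bounded below thanks to $b_1+b_2-a_1-a_2>2\pi$. You also correctly flag the cross term $2\,\mathrm{Re}\int_P f(x+t)\overline{g(x-t)}\,dx\,dt$ as the real obstacle. But your claim that the Fourier coefficients of the periodized indicator of $P$ ``stay strictly below the diagonal thresholds'' so that the cross term can be absorbed into a fixed fraction of the diagonal terms is not correct as stated, and the gap is not cosmetic: take $f(\xi)=e^{i\xi}$, $g(\eta)=-e^{i\eta}$, so $u(x,t)=2ie^{ix}\sin t$, and take a rectangle $P=[a_1,b_1]\times[-\tfrac12,\tfrac12]$ with $b_1-a_1>2\pi-1$ (so that the length hypothesis holds). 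Then
$$
\int_P|u|^2 \,dx\,dt = 4(b_1-a_1)\Bigl(\tfrac12-\tfrac{\sin 1}{2}\Bigr)\approx 0.32\,(b_1-a_1)
\quad<\quad \tfrac12\int_P\bigl(|f|^2+|g|^2\bigr)\,dx\,dt=(b_1-a_1),
$$
even though $g$ has mean zero. So the cross term cannot in general be absorbed into half (or any fixed fraction independent of the data) of the diagonal contribution; there is no uniform ``spectral gap'' between the off-diagonal symbol and the slice-length bounds. One must instead argue more globally. The paper organizes this via the doubled set $\tilde\Omega$ and the slice functions $A_\xi,B_\eta$ in Theorem~\ref{thm:CCq_for_wave1+1}, and your plan, as written, does not supply a correct replacement for the step you identified as the obstacle. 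This is the piece that would need to be filled in before the argument can be considered complete.

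The final reduction of the ``in particular'' statement (for $\Omega=\omega\times\mathbb{S}^1$ or $\mathbb S^1\times\omega$, take any nondegenerate interval in $\omega$ and the full circle in the other factor, giving $L>2\pi$) is exactly what the paper does and is correct.
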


\begin{thm}
\label{thm-S-1-intro}
  Let $M= \mathbb{S}^1$, $\cA= (-\Delta)^m$ 
  for some $m \in \N,m\geq 2$ and $2<p<\infty$. Moreover, suppose that $q \in L^\infty(\mathbb{S}^1 \times
  \mathbb{S}^1)$ is nonnegative and satisfies
    \begin{equation}  \label{eq:q-con-S1}
     \inf_{\Omega} q  >0 \qquad \text{for some nonempty open subset }\Omega \subset \mathbb{S}^1
   \times \mathbb{S}^1. 
 \end{equation} 
  Then (\ref{eq:nonlinear-wave-intro}) admits a ground state solution on $\mathbb{S}^1 \times \mathbb{S}^1$.
\end{thm}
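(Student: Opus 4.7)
The plan is to apply Theorem~\ref{thm-general-hyperbolic-intro} with $\cA=(-\Delta)^m$ and $M=\mathbb{S}^1$, so the task reduces to checking assumption~(A) on $\cA$ together with $(CE)_p$ for every $p\in(2,\infty)$ and $(CC)_q$ for the given weight. Since $-\Delta$ is selfadjoint with compact resolvent on $L^2(\mathbb{S}^1)$, the same holds for $\cA=(-\Delta)^m\ge 0$, which immediately gives (A1) and, because $\cA$ is a local differential operator, also (A3); for (A2) I would localize highly oscillating smooth test functions into any open $U\subset\mathbb{S}^1$ to drive the $\cA$-Rayleigh quotient to infinity.

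For $(CE)_p$ I would exploit the explicit spectrum $\lambda_{k,l}=k^{2m}-l^2$, $(k,l)\in\mathbb{N}_0^2$, of $L_\cA$ on $\mathbb{S}^1\times\mathbb{S}^1$. The key algebraic estimate $|\lambda_{k,l}|=|k^m-l|(k^m+l)\ge k^m+l$ (for $k\ge 1$ and $\lambda_{k,l}\ne 0$), combined with $|\lambda_{0,l}|=l^2$, shows that for $m\ge 2$ the series $\sum_{\lambda_{k,l}\ne 0}1/|\lambda_{k,l}|$ converges (the inner sum over $l$ is $O(\log k/k^m)$, summable in $k$). By Cauchy--Schwarz applied to Fourier expansions this yields a continuous embedding $E^\pm\hookrightarrow L^\infty(\mathbb{S}^1\times\mathbb{S}^1)$; truncating to finitely many modes and using that the $L^\infty$-tail is controlled uniformly on bounded sets of $E^\pm$ then upgrades this to compact embeddings $E^\pm\hookrightarrow L^p(\mathbb{S}^1\times\mathbb{S}^1)$ for every $p<\infty$.

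The decisive step is $(CC)_q$. Since $q\ge \delta>0$ on some open rectangle $I_1\times I_2\subset\Omega$, $(CC)_q$ would follow from the observability inequality $\|u\|_{L^2(I_1\times I_2)}^2\ge c\|u\|_{L^2(\mathbb{S}^1\times\mathbb{S}^1)}^2$ for every $u\in\ker L_\cA$. Each such $u$ expands as $u(x,t)=\sum_{\lambda\in\Lambda}d_\lambda(x)e^{i\lambda t}$ with $\Lambda:=\{0\}\cup\{\pm k^m:k\in\mathbb{N}\}$, where each $d_\lambda$ with $\lambda\ne 0$ is a one- or two-dimensional combination of $e^{\pm i|\lambda|^{1/m}x}$. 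The plan is, first, to invoke a generalized Ingham/Kahane--Komornik--Loreti inequality in the $t$-variable: because the gaps of $\Lambda$ satisfy $(k+1)^m-k^m\ge mk^{m-1}\to\infty$ for $m\ge 2$, for any interval $I_2$ there exists $A(I_2)>0$ with $\int_{I_2}|f|^2\,dt\ge A(I_2)\sum_\lambda|c_\lambda|^2$ for every $f=\sum c_\lambda e^{i\lambda t}$ whose Fourier support lies in $\Lambda$. Applying this pointwise in $x$ and integrating over $x\in I_1$ reduces the problem to a uniform lower bound $\int_{I_1}|d_\lambda(x)|^2\,dx\ge c(I_1)\|d_\lambda\|_{L^2(\mathbb{S}^1)}^2$, which I would obtain by Riemann--Lebesgue control of $\int_{I_1}e^{2i|\lambda|^{1/m}x}\,dx$ for $|\lambda|$ large, together with a compactness/unique continuation argument for the finitely many remaining values of $\lambda$.

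The principal obstacle is the Ingham-type estimate used in $(CC)_q$: classical Ingham requires the time interval to exceed $\pi$ divided by the minimal gap, and here the minimal gap in $\Lambda$ is only $1$, so a sharper result that exploits the gaps tending to infinity (Kahane; Komornik--Loreti) is essential in order to accommodate arbitrary open $I_2$. Once this inequality is available, the rest of the verification of $(CC)_q$ and the final application of Theorem~\ref{thm-general-hyperbolic-intro} are routine.
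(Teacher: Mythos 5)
Your plan is correct and reaches the same conclusion, but it takes a genuinely different route from the paper in both of the two nontrivial verifications.

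For $(CE)_p$, the paper deduces this from its general eigenfunction-expansion machinery (Corollary~\ref{cor-compact-embedding-m-even-S-N} with $N=1$, using the ``mode shift'' and Sogge bounds). Your Cauchy--Schwarz argument giving $E^\pm\hookrightarrow L^\infty$ is much more elementary and in fact stronger in the special case $M=\mathbb{S}^1$, $m\geq 2$: the factorization $|\lambda_{kl}|=|k^m-l|(k^m+l)\geq k^m+l$ does give the convergence $\sum_{\lambda_{kl}\neq 0}|\lambda_{kl}|^{-1}<\infty$, hence a uniform bound $\|u\|_\infty\lesssim\|u\|_{E^\pm}$, and compactness into $L^p$ for all $p<\infty$ follows from the uniformly small tails of the expansion plus the finite-dimensionality of the truncations. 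This buys you a cleaner estimate than the paper needs, but it does not generalize to $N\geq 2$.

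For $(CC)_q$, the paper's proof of Theorem~\ref{thm-komornik}(ii) invokes Komornik's two-dimensional lattice criterion $K(A)=0$ for the point set $A=\{(d,\pm|d|^m):d\in\Z\}\subset\Z^2$, which directly yields the observability inequality on an arbitrary open $\Omega\subset\mathbb{S}^1\times\mathbb{S}^1$. You instead reduce to a rectangle $I_1\times I_2\subset\Omega$ and tensorize: first a one-dimensional Kahane/Ingham inequality in $t$ over the frequency set $\Lambda=\{0\}\cup\{\pm k^m\}$, valid on an arbitrary interval $I_2$ precisely because the gaps $(k+1)^m-k^m\to\infty$ for $m\geq 2$; then, after integrating over $x\in I_1$, a uniform-in-$k$ lower bound $\int_{I_1}|a e^{ikx}+b e^{-ikx}|^2\,dx\geq c(I_1)(|a|^2+|b|^2)$ via Riemann--Lebesgue for large $k$ and linear independence for the finitely many small $k$. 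This is correct, though you should be explicit that the Ingham constant $A(I_2)$ must be uniform over all admissible coefficient sequences (which it is, by the statement of Kahane's theorem) so that it can be applied at each fixed $x$ and then integrated. Both methods are implementations of the same underlying gap-to-infinity principle; Komornik's $K(A)$ criterion packages the two-variable problem at once and is what the paper needs for the higher-dimensional tori in Theorem~\ref{thm-T-N-intro}, whereas your split into $t$- and $x$-inequalities is more transparent in the present one-dimensional case and makes clear exactly which classical result is being used. Both recover the sharp hypothesis $m\geq 2$: for $m=1$ the gaps in $\Lambda$ do not tend to infinity and neither argument applies (consistently with the stronger hypothesis~\eqref{eq:q-con-wave-1+1} required in Theorem~\ref{thm-wave-1+1}).
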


In the above theorems case we have $\nu_k(\cA)= k^{2m}$ with $m\in\N$, so $\nu_k(\cA)$ is a square for all $k
\in \N_0$ and therefore $\dim E^0 = \infty$ by (\ref{eq:square-observatoin}). The assumptions
\eqref{eq:q-con-wave-1+1},\eqref{eq:q-con-S1} are needed for the verification of the abstract condition
$(CC)_q$ above. It is worth pointing out that the case $m \ge 2$ leads to a much weaker assumption. 

Next we consider the case $M=\T^N$, where $\T^N= \mathbb{S}^1 \times \dots \times \mathbb{S}^1$ is the flat
$N$-torus.  

\begin{thm}
\label{thm-T-N-intro}
  Let $M= \T^N$, let $\cA=(-\Delta)^m$ for some even $m \in \N$, and let $2 < p <\frac{2N}{(N-m)_+}$.
  Moreover, suppose that $q \in L^\infty(\T^N \times \mathbb{S}^1)$ is nonnegative and satisfies
    \begin{equation}
 \label{eq:q-con-TN}
  \begin{cases}
     \;\;\inf \limits_{\Omega} q  &>0 \qquad 
     \text{for some nonempty open subset $\Omega \subset \T^N \times \mathbb{S}^1$ in case $m = 2$;}\\      
     \inf \limits_{\omega \times \mathbb{S}^1} q &>0 \qquad \text{for some nonempty open subset $\omega
     \subset \T^N$ in case $m>2$.}
   \end{cases}
 \end{equation}
 Then (\ref{eq:nonlinear-wave-intro}) admits a ground
 state solution on $\T^N \times \mathbb{S}^1$.
\end{thm}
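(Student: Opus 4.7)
The overall strategy is to derive Theorem~\ref{thm-T-N-intro} as an application of the abstract existence result Theorem~\ref{thm-general-hyperbolic-intro}. It suffices to verify that the operator $\cA = (-\Delta)^m$ on $L^2(\T^N)$ satisfies assumption (A) and that the compact embedding condition $(CE)_p$ as well as the $q$-control condition $(CC)_q$ hold under the stated hypotheses. Assumption (A) is standard for nonnegative integer powers of the Laplace--Beltrami operator on the flat torus, using the explicit Fourier diagonalization, locality, and the unbounded density of the exponentials on any open subset.

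For $(CE)_p$, I would compute in Fourier coordinates on $\T^N \times \bS^1$, where $L_\cA$ has eigenvalues $|\xi|^{2m} - l^2$ indexed by $(\xi, l) \in \Z^N \times \Z$. The hypothesis that $m$ is even is crucial here: it implies $|\xi|^m = (|\xi|^2)^{m/2} \in \N_0$, so one can factor
\[
|\xi|^{2m} - l^2 \;=\; (|\xi|^m - l)(|\xi|^m + l),
\]
and whenever this expression is nonzero, both integer factors have magnitude at least one, yielding the spectral gap
\[
\bigl||\xi|^{2m} - l^2\bigr| \;\geq\; c\,(1 + |\xi|^m + |l|) \qquad \text{whenever } |\xi|^{2m} \ne l^2.
\]
Consequently $E^\pm$ embeds continuously into an anisotropic Sobolev space of type $H^{m/2}(\T^N; L^2(\bS^1)) \cap L^2(\T^N; H^{1/2}(\bS^1))$, which in turn embeds compactly into $L^p(\T^N \times \bS^1)$ for every $p < 2N/(N-m)_+$ by Rellich--Kondrachov and the spatial Sobolev embedding $H^{m/2}(\T^N) \hookrightarrow L^{2N/(N-m)_+}(\T^N)$, combined with the temporal regularity gain.

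For $(CC)_q$, elements $u^0 \in \ker(L_\cA)$ have Fourier expansions concentrated on the set $\{(\xi,l) : l = \pm |\xi|^m\}$, so they take the form
\[
u^0(x,t) \;=\; \sum_{\xi \in \Z^N} \bigl(a_\xi e^{i|\xi|^m t} + b_\xi e^{-i|\xi|^m t}\bigr) e^{i\xi \cdot x}.
\]
In the case $m > 2$ with $\Omega = \omega \times \bS^1$, Fourier decomposition in time and Parseval reduce the required observability bound to a family of uniform estimates
\[
\|f_l\|_{L^2(\T^N)}^2 \;\leq\; C \int_\omega q\,|f_l|^2\,dx
\]
over the finite-dimensional spatial subspaces $V_l = \spa\{e^{i\xi\cdot x} : |\xi|^m = l\}$; these can be pursued via analyticity of trigonometric polynomials on the open set $\omega$, combined with the sparse structure of lattice spheres $|\xi|^m = l$ when $m>2$ (and with care about uniformity of constants across the increasing dimensions). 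In the case $m = 2$ with $\Omega$ an arbitrary open subset of $\T^N \times \bS^1$, kernel elements solve the beam equation $\partial_{tt}u + \Delta^2 u = 0$, and I would combine unique continuation for this operator (together with the analytic/smooth nature of the Fourier series of $u^0$) with a compactness-contradiction argument inside the closed subspace $\ker(L_\cA) \subset L^2(\T^N\times\bS^1)$ in order to upgrade the qualitative "no nontrivial kernel element vanishes on $\Omega$" statement into the quantitative estimate $(CC)_q$.

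The main obstacle will be the case $m = 2$ in the verification of $(CC)_q$: the kernel is infinite-dimensional, the spatial Laplace eigenspaces on $\T^N$ have multiplicities growing with the eigenvalue, and $\Omega$ is not required to be of product form. The required observability-type bound is therefore genuinely delicate, and its proof is expected to draw on analogues of spectral observability results for Schr\"odinger and beam equations on tori (in the spirit of Jaffard and Maci\`{a}), or a careful argument ruling out concentration of linear combinations of the traveling-wave eigenfunctions outside $\Omega$ uniformly across the frequency parameter.
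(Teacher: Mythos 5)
Your overall strategy (reduce to Theorem~\ref{thm-general-hyperbolic-intro}, then verify $(A)$, $(CE)_p$, $(CC)_q$) is the paper's strategy, and your treatment of assumption $(A)$ is fine. But both the $(CE)_p$ and the $(CC)_q$ verification contain genuine gaps.

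\emph{Gap in $(CE)_p$.} Your factorization $|\xi|^{2m}-l^2 = (|\xi|^m - l)(|\xi|^m + l)$ and the consequence $\bigl||\xi|^{2m}-l^2\bigr| \gtrsim 1+|\xi|^m+|l|$ are correct, but this bound is far too weak away from resonance and the resulting anisotropic Sobolev space is genuinely larger than $E^+\oplus E^-$. Concretely, writing $s = \frac{p}{p-2}$, the Hausdorff--Young plus H\"older route requires
\begin{equation*}
\sum_{\xi\in\Z^N}\sum_{l\in\Z} \bigl(1+|\xi|^m+|l|\bigr)^{-s} \;\sim\; \sum_{\xi} (1+|\xi|^m)^{1-s},
\end{equation*}
which converges only for $s > 1+\tfrac{N}{m}$, i.e.\ $p < \tfrac{2(N+m)}{N}$. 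The same bound comes out of your interpolation between $H^{m/2}(\T^N;L^2)$ and $L^2(\T^N;H^{1/2})$. This is strictly smaller than the claimed range $p < \tfrac{2N}{(N-m)_+}$. The paper's Corollary~\ref{cor-compact-embedding-torus} works directly on $E^\pm$ and exploits the finer spectral gap estimate
\begin{equation*}
\bigl| (|\xi|^m \pm p)^2 - |\xi|^{2m} \bigr| \;\gtrsim\; p\,|\xi|^m \qquad (p\in\N,\ p\le |\xi|^m \text{ resp.\ } p\ge 1),
\end{equation*}
i.e.\ the gap grows linearly in the distance $p$ from the resonance $l=|\xi|^m$, not merely like $\max\{|\xi|^m,|l|\}$. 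With this estimate the inner sum over $l$ produces $|\xi|^{-ms}$ rather than $|\xi|^{m(1-s)}$, which is exactly the extra factor $|\xi|^m$ you need to reach the stated critical exponent. Passing through the anisotropic Sobolev space discards this information and cannot recover it.

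\emph{Gap in $(CC)_q$ for $m=2$.} The compactness-contradiction scheme you propose does not close: $\ker(L_\cA)$ is an \emph{infinite-dimensional} closed subspace of $L^2(\T^N\times\bS^1)$, so its unit ball is not compact and unique continuation yields no quantitative observability estimate by this mechanism. The paper instead cites Komornik's explicit $L^2$-estimate \cite{komornik-1992} for exponential sums $\sum_{\alpha\in A} b_\alpha e^{i\alpha\cdot(x,t)}$ with frequency set $A = \{(d,\pm|d|^2): d\in\Z^N\}$; Komornik's argument is a lacunary/separation argument (the $K(A)=0$ criterion), not a unique continuation or semiclassical argument of Jaffard--Maci\`a type. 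For $m>2$ the paper does not work with the sparse lattice spheres $\{|\xi|^m=l\}$ at all: it deduces from the $m=2$ case (Theorem~\ref{thm-komornik}(iii)) that every nonempty open $\omega\subset\T^N$ controls all eigenfunctions of $-\Delta$, observes that $(-\Delta)^m$ has the same eigenfunctions, and then upgrades from spatial control of $\omega$ to space-time control of $\omega\times\bS^1$ via Lemma~\ref{control-condition-implication}. This sidesteps the uniformity issue across growing eigenspaces that you flag but do not resolve.
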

 
We note here that the set of eigenvalues of $-\Delta$ on $\T^N$ 
is given by $\{|k|^2\::\: k \in \Z^N\}$, and consequently the set $\{|k|^{2m}\::\: k \in \Z^N\}$ of
eigenvalues of $(-\Delta)^m$ on $\T^N$ contains an infinite number of squares, which by
(\ref{eq:square-observatoin}) implies that $\dim E^0 = \infty$. We stress that this is true for every $m \in
\N$, whereas the evenness assumption for $m$ in Theorem~\ref{thm-T-N-intro} allows us to prove 
$(CE)_p$ in a rather direct way, see Corollary~\ref{cor-compact-embedding-torus} below.

\begin{rem}  \label{rem:polyharmonicwaves_on_torus}
{\rm   Our method does not apply to the nonlinear wave equation on tori in higher space dimensions, which
  corresponds to  $N\geq 2$ and $m=1$ in Theorem~\ref{thm-T-N-intro}. 
  Indeed, the compactness property $(CE)_p$ fails for any $p\in
  [1,\infty]$. To see this, take any infinite sequence of tuples $(k,l)\in\Z^N\times\Z$ such that 
  $||k|^2-l^2|$ is bounded away from zero and infinity. As an example, one may choose $k=(l,1,0,\ldots,0)$.  
  A straightforward computation then shows that the corresponding eigenfunctions, say 
  $$
    (x,t)\mapsto \cos(k_1x_1)\cdot\ldots\cdot \cos(k_Nx_N)\cos(lt),
  $$ 
  are bounded in $E^+\oplus E^-$ and weakly convergent to zero  with constant $L^p$-norm. In particular, these
  eigenfunctions do not converge to zero in $L^p(\mathbb T^N\times\bS^1)$. So $E^+\oplus E^-$ does not embed
  compactly into $L^p(\mathbb T^N\times\bS^1)$, which disproves $(CE)_p$.  
Note that this reasoning breaks down for even $m$ since, for any given $M\in\N$, the inequality $1\leq
  ||k|^{2m}-l^2|\leq M$ holds for at most finitely many tuples $(k,l)\in\Z^N\times\Z$. The case of odd
  $m\geq 3$ remains open. }\end{rem}

\medskip
   
Next we consider the case where $M = \mathbb{S}^N$ for some $N \ge 2$ and $\cA=(-\Delta)^m$. 
Since the set of eigenvalues of $-\Delta$ on $\mathbb{S}^N$ is given by $\{k(k+N-1)\::\: k \in \N_0\}$ and is
 therefore integer-valued, the set of eigenvalues of $(-\Delta)^m$ on $\mathbb{S}^N$ contains an infinite
number of squares if $m \in \N$ is even, and this implies again that $\dim E^0 = \infty$ in this case. We have the
following result.

\begin{thm}
  \label{thm-intro-sphere}
  Let $M= \mathbb{S}^N$, $N \ge 2$, let $\cA=(-\Delta)^m$ for some even $m \in \N$, and let
   $2 < p < \frac{2(N+1)}{(N-m)_+}$.  Moreover, suppose that $q \in
   L^\infty(\mathbb{S}^N \times \mathbb{S}^1)$ is nonnegative and satisfies
    \begin{equation}
 \label{eq:q-con-SN-intro}
\inf \limits_{\omega \times \mathbb{S}^1} q  >0 \qquad \text{for some open subset $\omega \subset \mathbb{S}^N$ which meets every great
circle of $\mathbb{S}^N$}.
 \end{equation}
  Then (\ref{eq:nonlinear-wave-intro}) admits a ground state solution on $\mathbb{S}^N \times \mathbb{S}^1$.
\end{thm}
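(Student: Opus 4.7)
The plan is to apply Theorem~\ref{thm-general-hyperbolic-intro} with $\cA = (-\Delta)^m$ on $M = \mathbb{S}^N$; this reduces matters to verifying assumption (A) together with the two conditions $(CE)_p$ and $(CC)_q$. Assumption (A) is essentially immediate: on the closed manifold $\mathbb{S}^N$ the operator $(-\Delta)^m$ is self-adjoint and nonnegative with compact resolvent, so (A1) holds; (A2) is checked by transplanting highly oscillating, compactly supported test functions from Euclidean space via a coordinate chart into any prescribed nonempty open subset; and (A3) is the locality of the differential operator $(-\Delta)^m$.

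For $(CE)_p$, the plan is to exploit the explicit product structure of the $L_\cA$-eigenfunctions on $\mathbb{S}^N \times \bS^1$: they are of the form $Y_{k,j}(x)\cos(lt)$ and $Y_{k,j}(x)\sin(lt)$ with eigenvalue $\mu_k^m - l^2 = (\mu_k^{m/2}-l)(\mu_k^{m/2}+l)$, where $\mu_k = k(k+N-1)$ and $(Y_{k,j})_j$ is an $L^2$-orthonormal basis of the degree-$k$ spherical harmonics. Since $m$ is even, $\mu_k^{m/2}$ is a nonnegative integer, so whenever $l \neq \mu_k^{m/2}$ one has $|\mu_k^{m/2}-l|\geq 1$ and hence $|\mu_k^m - l^2| \geq \mu_k^{m/2} \gtrsim k^m$. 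Combining this quantitative spectral gap with Sogge's $L^p$-estimates $\|Y_{k,j}\|_{L^p(\mathbb{S}^N)} \lesssim k^{\sigma(p,N)} \|Y_{k,j}\|_{L^2}$ and the trivial bound $\|\cos(lt)\|_{L^p(\bS^1)}\lesssim 1$, a direct computation on individual eigenfunctions yields $\|u\|_{L^p} \lesssim k^{\sigma(p,N)-m/2}\|u\|_{\pm}$, which stays bounded as $k\to\infty$ precisely in the subcritical range $p < 2(N+1)/(N-m)_+$. A standard high-frequency truncation argument then upgrades continuity to compactness, in the same spirit as Corollary~\ref{cor-compact-embedding-torus}.

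The main obstacle is $(CC)_q$. Since $q \geq \inf_{\omega\times\bS^1} q > 0$ on $\omega\times\bS^1$, the condition is equivalent to the observability inequality
\begin{equation*}
  \int_{\mathbb{S}^N \times \bS^1}|u|^2\,d(x,t) \;\leq\; C \int_{\omega \times \bS^1}|u|^2\,d(x,t) \qquad \text{for all } u \in \ker L_\cA.
\end{equation*}
Every $u \in \ker L_\cA$ admits the representation
\begin{equation*}
  u(x,t) = \sum_{k \in \N_0}\bigl(a_k(x)\cos(\ell_k t)+b_k(x)\sin(\ell_k t)\bigr),
\end{equation*}
with $\ell_k := \mu_k^{m/2}\in\N_0$ and $a_k, b_k$ spherical harmonics of degree $k$. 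Since $k \mapsto \ell_k$ is strictly increasing on $\N_0$, Parseval in the $t$-variable reduces the estimate above to the uniform eigenfunction observability
\begin{equation*}
  \|Y\|_{L^2(\mathbb{S}^N)}^2 \;\leq\; C\,\|Y\|_{L^2(\omega)}^2 \qquad \text{for every spherical harmonic $Y$ on $\mathbb{S}^N$.}
\end{equation*}
This is the crux of the argument, and I expect to establish it by contradiction via semiclassical defect measures: a normalized sequence $(Y_{k_n})$ with $\|Y_{k_n}\|_{L^2(\omega)} \to 0$ would produce a probability measure on the cosphere bundle $S^*\mathbb{S}^N$, invariant under the geodesic flow, whose support is disjoint from $\pi^{-1}(\omega)$. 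Since the maximal geodesics of $\mathbb{S}^N$ are precisely its great circles and, by hypothesis, every great circle meets $\omega$, no such invariant measure can exist. With (A), $(CE)_p$ and $(CC)_q$ in hand, Theorem~\ref{thm-general-hyperbolic-intro} delivers the desired ground state solution on $\mathbb{S}^N \times \bS^1$.
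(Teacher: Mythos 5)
Your overall strategy---reducing to Theorem~\ref{thm-general-hyperbolic-intro} and verifying (A), $(CE)_p$ and $(CC)_q$---is exactly the paper's. Your defect-measure argument for $(CC)_q$ is sound and differs from the paper's route, which derives the required eigenfunction observability on $\omega$ from the Bardos--Lebeau--Rauch geometric control theorem applied to the wave-equation solution $e^{i\sqrt{\lambda}t}v(x)$ (Theorem~\ref{closed-geodesic}, then Corollary~\ref{cor-CC-sphere} and Lemma~\ref{control-condition-implication}); either approach works, yours trading the HUM/geometric-control machinery for a semiclassical invariance argument.

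The genuine gap is in $(CE)_p$. Your minimal spectral gap $|\mu_k^m - l^2|\gtrsim k^m$ and the resulting per-eigenfunction bound $\|Y_{ki}e_l\|_{L^p}\lesssim k^{\sigma_p - m/2}\|Y_{ki}e_l\|_\pm$ are correct, but the assertion that this stays bounded ``precisely in the range $p<2(N+1)/(N-m)_+$'' is false: $\sigma_p-m/2\le 0$ iff $p(N-1-m)\le 2N$, which imposes \emph{no} constraint when $N\le m+1$ and otherwise gives the strictly larger threshold $2N/(N-1-m)$. For example, for $N=3$ and $m=2$ one has $\sigma_p - m/2 = -3/p<0$ for all $p$, whereas the correct critical exponent is $p^*=8$. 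More fundamentally, a per-eigenfunction bound cannot by itself yield even continuity of the embedding $E^+\oplus E^-\hookrightarrow L^p(\mathbb{S}^N\times\mathbb{S}^1)$: eigenfunctions are not orthogonal in $L^p$, so the triangle inequality controls only the $\ell^1$ norm of the expansion coefficients rather than the $\ell^2$ norm governed by $\|\cdot\|_\pm$. One needs a Hausdorff--Young-type estimate and a convergent sum over all modes, as in Corollary~\ref{cor-compact-embedding-torus}. On $\mathbb{S}^N$ there is no spatial Hausdorff--Young inequality, so the paper substitutes Sogge's spectral cluster bounds~\eqref{eq:Sogge_bound} together with a ``mode shift'' $k_l$ to the nearest resonant spherical harmonic degree; the near-resonant shell $j=0$ in Proposition~\ref{prop:EmbeddingSn} and Corollary~\ref{cor-compact-embedding-m-even-S-N} is then precisely what fixes $p^*=2(N+1)/(N-m)_+$. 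Your worst-case gap bound $\gtrsim k^m$ discards exactly this near-resonant structure, so the claimed critical exponent cannot be recovered from the argument you sketch.
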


We assume $N \ge 2$ here since the case $N=1$ is already covered by Theorem~\ref{thm-S-1-intro}, which only
requires the weaker condition~(\ref{eq:q-con-S1}) in place of (\ref{eq:q-con-SN-intro}). We remark that
(\ref{eq:q-con-SN-intro}) is for instance satisfied if $\omega$ is an open neighborhood of a great
circle in $\mathbb{S}^N$.
Theorem~\ref{thm-intro-sphere} excludes the case $m=1$ which leads to the classical wave operator
  $-\Delta + \partial_{tt}$. In fact, in this case  we have $\dim E^0 < \infty$ as the set
 $\{k(k+N-1)\::\: k \in \N_0\}$  of eigenvalues of $-\Delta$ contains at most finitely many
 squares.  The latter follows from the fact that, for large $k$, a solutions $l$ of  $k(k+N-1)=l^2$ must satisfy $k+\frac{N-1}{2}-\frac{1}{2}< l< k+\frac{N-1}{2}$ and therefore cannot be integer.
 
 \medskip

 If $N \in \N,N\geq 3$ is odd and
\begin{equation}
  \label{eq:def-c-N}
c_N:=  \Bigl(\frac{N-1}{2}\Bigr)^2,
\end{equation}
then $-c_N$ is a non-zero eigenvalue of infinite multiplicity of $-\Delta +\partial_{tt}$ on $\mathbb{S}^N
\times I$ since 
$$
k(k+N-1)+ c_N  = (k+\frac{N-1}{2})^2
$$
is a square for every $k \in \N$. Hence condition $(CE)_p$ cannot be satisfied for  $-\Delta +\partial_{tt}$
in this case. 
However, we may consider the Schr\"odinger operator $\cA=-\Delta + c_N$ in (\ref{eq:nonlinear-wave-intro}),
which then leads to a nonlinear Klein-Gordon equation with mass $c_N$. 

\begin{thm}
  \label{thm-intro-sphere-klein-gordon}
  Let $M= \mathbb{S}^N$ for some odd $N \ge 3$, let $\cA=-\Delta+c_N$ with $c_N$ given in (\ref{eq:def-c-N}), and let
  $2 < p <  \frac{2(N+1)}{N-1}$.  Moreover, suppose that $q \in L^\infty(\mathbb{S}^N
  \times \mathbb{S}^1)$ is nonnegative and satisfies
    \begin{equation}
 \label{eq:q-con-SN-intro-1}
\inf \limits_{\omega \times \mathbb{S}^1} q  >0 \qquad \text{for some open subset $\omega \subset
\mathbb{S}^N$ that meets every great circle of $\mathbb{S}^N$}.
 \end{equation}
  Then (\ref{eq:nonlinear-wave-intro}) admits a ground state solution on $\mathbb{S}^N \times \mathbb{S}^1$.
\end{thm}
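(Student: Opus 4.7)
\medskip
\noindent\emph{Proof plan.} The strategy is to verify the hypotheses of Theorem~\ref{thm-general-hyperbolic-intro} for $\cA = -\Delta + c_N$ on $M = \mathbb{S}^N$ and then invoke that theorem. There are three items to check: assumption (A) on the operator, the compact embedding $(CE)_p$, and the control condition $(CC)_q$ (the structural assumptions on $q$ are given).

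Assumption (A) is routine. Because $c_N>0$, the operator $-\Delta + c_N$ is selfadjoint with compact resolvent and strictly positive spectrum on the closed manifold $\mathbb{S}^N$, which immediately gives (A1); for (A2) one takes smooth bump functions with shrinking support inside $U$ to produce arbitrarily large Rayleigh quotients; and (A3) is immediate from the locality of the differential operator $\cA$.

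For $(CE)_p$, the crucial observation is that for odd $N$ the eigenvalues $\nu_k(\cA)=(k + \tfrac{N-1}{2})^2$ are squares of positive integers, so that for every nonzero eigenvalue of $L_{\cA}$ the factorization
$$
\nu_k(\cA)-l^2 = \bigl(\alpha_k - |l|\bigr)\bigl(\alpha_k + |l|\bigr), \qquad \alpha_k := k+\tfrac{N-1}{2}\in\N,
$$
yields the spectral gap $|\nu_k(\cA)-l^2| \ge \alpha_k + |l|$ via $|\alpha_k-|l||\ge 1$. This already embeds $E^+ \oplus E^-$ continuously into $H^{1/2}(\mathbb{S}^N \times \mathbb{S}^1)$, and the upper bound $p<\tfrac{2(N+1)}{N-1}$ is precisely the Strichartz exponent of the half-wave group at this regularity. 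I would therefore conclude $(CE)_p$ by appealing to Sogge-type spectral cluster / Strichartz estimates for $e^{\pm it\sqrt{\cA}}$ on $\mathbb{S}^N$ (essentially a repetition of the argument used in the proof of Theorem~\ref{thm-intro-sphere} for even $m$, but here with $m=1$), the strict inequality then upgrading boundedness to compactness.

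The main obstacle is $(CC)_q$. Since $\sqrt{\cA}$ has integer spectrum $\{\alpha_k\}_{k\in\N_0}$, we have $e^{2\pi i\sqrt{\cA}}=\id$, so every $u \in \ker(L_\cA)$ is a $2\pi$-periodic solution of the free Klein--Gordon equation $\cA u + \partial_{tt}u=0$ and can be written as $u(x,t) = \Real\bigl(e^{it\sqrt{\cA}}f\bigr)(x)$ for some $f \in L^2(\mathbb{S}^N)$. I would then reduce $(CC)_q$ to the observability estimate
$$
\int_0^{2\pi}\!\!\int_{\mathbb{S}^N} |u|^2\,d(x,t) \;\le\; C \int_0^{2\pi}\!\!\int_\omega |u|^2\,d(x,t)
$$
for all such $u$, which by the Bardos--Lebeau--Rauch theory follows from the geometric control condition: the bicharacteristics of $L_\cA$ project to the geodesic flow on $\mathbb{S}^N$, whose orbits are great circles of length $2\pi$, so the hypothesis that $\omega$ meets every great circle gives exactly the required control within one period. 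Combined with $q\ge 0$ and $\inf_{\omega\times\mathbb{S}^1} q>0$, this yields $(CC)_q$, after which Theorem~\ref{thm-general-hyperbolic-intro} supplies the desired ground state solution.
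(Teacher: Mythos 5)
Your proposal is correct and follows essentially the same strategy as the paper: verify (A), $(CE)_p$, and $(CC)_q$, then invoke Theorem~\ref{thm-general-hyperbolic-intro}. Two remarks on where you diverge from the paper's implementation. For $(CE)_p$, the spectral gap $|\nu_k-l^2|\ge\alpha_k+|l|$ and the identification of $p^*=\frac{2(N+1)}{N-1}$ as the half-wave Strichartz exponent at $H^{1/2}$-regularity are exactly the heuristics driving Corollary~\ref{cor-compact-embedding-N-odd-S-N}; note however that $E^\pm\hookrightarrow H^{1/2}(\mathbb{S}^N\times\bS^1)$ alone gives only $p\le\frac{2(N+1)}{N}$ by Sobolev, so one must genuinely exploit the concentration of the $E^\pm$-norm near the light cone. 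The paper does this via the ``mode shift'' decomposition $u=\sum_j T_ju$ in Proposition~\ref{prop:EmbeddingSn} together with Sogge's cluster bounds; this is the precise version of your appeal to Strichartz estimates, and the case $m=1$ (Corollary~\ref{cor-compact-embedding-N-odd-S-N}) is indeed, as you say, a simpler repetition of that argument, also attributed to \cite{Zhou_waveSn}. For $(CC)_q$, you apply Bardos--Lebeau--Rauch observability directly to the $2\pi$-periodic Klein--Gordon solutions $\Real(e^{it\sqrt{\cA}}f)$ that make up $\ker(L_\cA)$. The paper instead factors the argument: it first derives from BLR a purely spatial observability estimate for individual eigenfunctions of $-\Delta$ (Theorem~\ref{closed-geodesic}, Corollary~\ref{cor-CC-sphere}), and then lifts it to $\ker(L_\cA)$ by the Fubini/orthogonality argument of Lemma~\ref{control-condition-implication}. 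The two routes are equivalent in this setting; the paper's version avoids having to invoke BLR for the mass-perturbed operator and, because it is stated at the level of eigenfunctions of $\cA$, is reusable across all the examples via Remark~\ref{rem-control-space-time}(ii). Your direct route is fine here but requires the (true, but extra) observation that the geometric control argument is insensitive to the zeroth-order term $c_N$.
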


We note that the upper bound   $\frac{2(N+1)}{N-1}$ for $p$ is needed for the
verification of the compactness condition $(CE)_p$. We summarize our applications of
Theorem~\ref{thm-general-hyperbolic-intro} in a table, where $p^*$ denotes the exponent such that we can
prove the existence of ground states whenever $2<p<p^*$.  

\medskip 

  \begin{center}
  \captionof{table}{    
     Applications of Theorem~\ref{thm-general-hyperbolic-intro} \label{tab:results} }
     \renewcommand{\arraystretch}{1.5}
     \begin{tabular}{|c|c|c|c|c|c|c|c|}
     \hline
    ~ & $M$ & $\cA$ & $p^*$ & $q$ & $(CE)_p$  & $(CC)_q$    \\  \hline
     Thm~\ref{thm-wave-1+1} & $\mathbb{S}^1$ & $-\Delta$ & $\infty$ & \eqref{eq:q-con-wave-1+1} &
     Cor.~\ref{cor-compact-embedding-m-even-S-N} &  Thm~\ref{thm:CCq_for_wave1+1}
     \\     
    Thm~\ref{thm-S-1-intro} & $\mathbb{S}^1$ & $(-\Delta)^m$, $m\geq 2$ & $\infty$ & \eqref{eq:q-con-S1} & 
     Cor.~\ref{cor-compact-embedding-m-even-S-N}  & Cor.~\ref{cor-summary-CC-q}(iii),(iv)   \\
     Thm~\ref{thm-T-N-intro} & $\T^N$ &  $(-\Delta)^m$, $m$ even &
     $\frac{2N}{(N-m)_+}$ & \eqref{eq:q-con-TN} & Cor.~\ref{cor-compact-embedding-torus} &
     Cor.~\ref{cor-summary-CC-q}(ii),(iv)\\
     Thm~\ref{thm-intro-sphere} & $\mathbb{S}^N$, $N \ge 2$ & $(-\Delta)^m$, $m$ even& $
     \frac{2(N+1)}{(N-m)_+}$ & \eqref{eq:q-con-SN-intro} &
     Cor.~\ref{cor-compact-embedding-m-even-S-N} & Cor.~\ref{cor-summary-CC-q}(i)   \\
     Thm~\ref{thm-intro-sphere-klein-gordon} & $\mathbb{S}^N$, $N \ge 3$ odd & $-\Delta+c_N$ &
      $\frac{2(N+1)}{N-1}$    &       
     \eqref{eq:q-con-SN-intro-1} & Cor.~\ref{cor-compact-embedding-N-odd-S-N} & Cor~\ref{cor-summary-CC-q}(i)
     \\    \hline      
  \end{tabular} 
  \end{center}

\medskip

\begin{rem}
\label{comparison-remark}
  {\rm 
(i) In all of our results stated above, the pure power nonlinearity $q(x,t)|u|^{p-2}u$ in (\ref{eq:nonlinear-wave-intro}) can be replaced by the more general variant $q(x,t)f(u)$ with $f\in C(\R)$ satisfying the following conditions. 
   \begin{itemize}
   \item[$(f_1)$] The function $\frac{f(r)}{|r|}$ is strictly increasing on $\R$ with $\lim \limits_{r \to 0} \frac{f(r)}{|r|} = 0$.
   \item[$(f_2)$] There exists constants $c,C>0$ with 
        $$
         \int_0^s f(\tau)d\tau \geq c |s|^{p}\quad \text{and}\quad |f(s)|\leq C(1+|s|^{p-1}) \qquad \text{for all $s \in \R$.}
        $$
   \end{itemize}
   The more general version of Theorem~\ref{thm-general-hyperbolic-intro} with these assumptions is given in Theorem~\ref{thm-general-hyperbolic-section} below. The conditions $(f_1)$, $(f_2)$ include quasipolynomials of the form
   $$
   f(u)= \sum_{i=1}^k a_i |u|^{p_i-2}u\qquad \text{with}\qquad \text{$2 < p_1 <p_2 < \dots < p_k=
   p\quad$ and $\quad a_1,\dots,a_k > 0$.} $$
 (ii) As mentioned above, the abstract theory in \cite{bartsch-mederski-2017-JFPT} cannot be applied to
 obtain the results above. This is due to the key assumption (I8) in \cite[Section
 4]{bartsch-mederski-2017-JFPT} which is not satisfied in the present setting. In fact, in the context of
 (\ref{eq:nonlinear-wave-intro}), this assumption reads as 
 $$
 \frac{I(t_n u_n)}{t_n^2} \to \infty \qquad \text{if $t_n \to +\infty$ and $u_n^+ \to u^+ \not = 0$,}
 $$
 where the functional $I \in C^1(E,\R)$ is defined by $I(u)=\int_{M \times \bS^1}q(x)|u|^p\,d(x,t)$. So this
 assumption can only be satisfied if $I(u)>0$ for every $u \in E^+$. This, however, is wrong if $q \equiv 0$ on $U \times \bS^1$ for an open set $U \subset M$.
 To see this, choose $k_0 \in \N_0$ with $\nu_k(\cA) >0$ for $k > k_0$. Then take a function $\tilde u\in C^\infty_c(U)$ satisfying
 $$
 \int_{M} \tilde u \zeta_k \,dx = 0 \qquad \text{for $k=0,...,k_0$.}
 $$
 This is possible since $C^\infty_c(U)$ is infinite-dimensional. Exploiting the eigenfunction expansion of $\tilde u$,
 we find that $u \in E^+$ for the function $(x,t) \to u(x,t):=\tilde u(x)$, and $I(u)=0$ since $u$ is supported in $U \times \bS^1$.}   
\end{rem}

To close this introduction, we wish to compare the present work with the previously available critical point theory for strongly indefinite functionals and its application to the existence of time-periodic solutions to superlinear wave equations.
The direct variational approach goes back to the seminal papers by Rabinowitz \cite{Rab_FreeVibrations} and Benci-Rabinowitz \cite{benci-rabinowitz-1979}. It has then been developed further by many authors, including Hofer \cite{hofer:1983}, Li-Willem \cite{Li-Willem-1995}, Li-Szulkin \cite{Li-Szulkin-1996},   Bartsch-Ding \cite{BaDi_PeriodicSolutions} and Bartsch-Ding-Lee \cite{BaDiLee_PeriodicSolutions}. Moreover, the dual variational approach to superlinear wave equations has been introduced by Brezis-Coron-Nirenberg \cite{BreCorNir}.

In most of the works on periodic solutions to superlinear wave equations, the Dirichlet problem in bounded subsets of $\R^N$ is considered. An exception is the work of Zhou \cite{Zhou_waveSn} where the author considered a variant of (\ref{eq:nonlinear-wave-intro}) with the operator $\cA=-\Delta+c_N$ on a higher-dimensional sphere $M=\mathbb{S}^N$ with a direct variational approach, and he proves the existence of time-periodic solutions for a class of superlinear nonlinearities. Our Theorem~\ref{thm-intro-sphere-klein-gordon} complements his results in \cite{Zhou_waveSn} by providing the existence of ground states and admitting coefficient functions $q(x,t)$ that may vanish on open subsets of $\bS^N\times\bS^1$.
As we mentioned already, we are not aware of any previously available abstract result which applies to the double degenerate case where the linear wave operator in (\ref{eq:nonlinear-wave-intro}) has an infinite-dimensional kernel and the weight $q$ has a nontrivial zero set.

\subsection{Organization of the paper}

The paper is organized as follows. In Section~\ref{sec:an-abstr-exist} we formulate an abstract result on the
existence and variational characterization of ground state solutions of functionals of the form
(\ref{eq:general-functional-intro}). In Section~\ref{sec:exist-ground-state}, we apply this result in the
framework of generalized nonlinear wave equations, and we prove a generalization of
Theorem~\ref{thm-general-hyperbolic-intro}, see Theorem~\ref{thm-general-hyperbolic-section} below. In
Section~\ref{sec:compact-embeddings} we provide a general criterion for compact embeddings of the spaces
$E^\pm$ defined above into weighted $L^p$-spaces, and we apply this criterion to the specific examples
addressed in Theorems~\ref{thm-S-1-intro},~\ref{thm-T-N-intro},~\ref{thm-intro-sphere} and \ref{thm-intro-sphere-klein-gordon} above. 
Section~\ref{sec:control-property} is devoted to the verification of the control condition $(CC)_q$  in
these examples.
Finally, in Section~\ref{sec:proofs-theor-refthm} we complete the proofs of Theorems~\ref{thm-S-1-intro},~\ref{thm-T-N-intro},~\ref{thm-intro-sphere} and \ref{thm-intro-sphere-klein-gordon}.

\section{An abstract existence result for ground state solutions}
\label{sec:an-abstr-exist}
In this section we establish an existence theorem for ground state solutions in an abstract setting which allows
us to derive the main results presented in the introduction.

Let $(E,\|\cdot\|)$ be a reflexive Banach space with the following further properties.
\begin{itemize}
\item[(E1)] We have $\|u\|^2 = \|P^+ u\|^2+\|P^-u\|^2 + \|P^0 u\|^2$ for $u \in E$, where $P^j: E \to E^j
\subset E$, $j \in \{\pm,0\}$ are continuous projections associated to a topological decomposition $E = E^+
\oplus E^0 \oplus E^-$ into Banach spaces. Furthermore, the map $\|\cdot\|^2$ is continuously
differentiable on $E^+\oplus E^-$.  
\item[(E2)] The closed subspace $(E^-,\|\cdot\|)$ is uniformly convex.
\end{itemize}
We recall that $(E2)$ implies that for every sequence $(u_n)_n$ in $E^-$ with $u_n \weak u$ and $\|u_n\| \to
\|u\|$ we have $u_n \to u$ strongly as $n \to \infty$.

\medskip

In the following, to abbreviate the notation, we write $u^\pm$ in place of $P^\pm u$ for $u \in E$ and $u^0$ in place of $P^0 u$. We consider functionals $\Phi \in C^1(E)=C^1(E,\R)$ of the form
\begin{equation}\label{eq:defPhi}
u \mapsto \Phi(u)=\frac{1}{2}\|u^+\|^2-\frac{1}{2}\|u^-\|^2-I(u),
\end{equation}
where  $I$ satisfies the following assumptions: 
\begin{itemize}
\item[(I0)] {\em (Growth conditions)} $I \in C^1(E)$ is nonnegative with $I(0)=I'(0)=0$ and
  \begin{equation}
    \label{eq:B02-prelim}
I(u)>0 \qquad \text{for }u \in E^0 \setminus \{0\}.
\end{equation}
Moreover, for each $u
\in E \setminus \{0\}$, the function $t \mapsto \frac{I(tu)}{t^2}$ is nondecreasing on $(0,\infty)$ with
  \begin{equation}
    \label{eq:B01}
\limsup_{t \to 0} \frac{I(tu)}{t^2}<\frac{\|u\|^{2}}{2}
\qquad  \text{and}\qquad  
  \lim_{t \to +\infty} \frac{I(tu)}{t^2} = +\infty \quad \text{if $I(u) \not = 0$ and $u^+ \neq 0$.}
\end{equation}
\item[(I1)] {\em (Weak subadditivity property)} For every $\eps>0$ there exists $\kappa_\eps>0$ with $I(\phi+ \psi)\le \eps + \kappa_\eps(I(\phi)+ I(\psi))$ for all  $\phi, \psi \in E$.
\item[(I2)] {\em (Partial strong continuity)} $I$ is strongly continuous w.r.t. weak convergence in $E^+$, i.e. for every $u \in E$ and every sequence $(w_n)_n$ in $E^+$ with $w_n \weak w$ in $E^+$ we have $I(u + w_n) \to I(u+w)$ as $n \to \infty$.
\item[(I3)] {\em (Strict weak lower semicontinuity property w.r.t. $E^0$)} If $(u_n)_n$ is a sequence in
  $E$ with $u_n \weak u$ and $u_n^+ \to u^+$, then
  \begin{equation}
    \label{weak-lower-semicont-eq}
  I(u) \le \liminf_{n \to \infty} I(u_n).
  \end{equation}
  Moreover, if, in addition, $u_n^- \to u^-$ and equality holds in (\ref{weak-lower-semicont-eq}),
then $u_n^0 \to u^0$ strongly as $n \to \infty$ after passing to a subsequence.
\end{itemize}
We also need the following abstract geometric assumptions on the functional $\Phi$ defined in (\ref{eq:defPhi}).
\begin{itemize}
\item[(S1)] {\em (Saddle point structure I)} For each $w\in E^+ \setminus \{0\}$ the restriction of $\Phi$ to 
  \begin{equation}
    \label{eq:def-E-w}
E_w:= \R^+ w\plus
  E^0\plus E^-
  \end{equation}
has at most one critical point\footnote{Here and in the following  $\R^+ = (0,\infty)$ denotes the {\em open}
half line.}, i.e., there exists at most one $u \in E_w$ with $\Phi'(u) \equiv 0$ on $\R w\oplus E^0\oplus E^-$. Moreover, if $u$ exists, it is a global maximizer of $\Phi$ on $E_w$.
\item[(S2)] {\em (Saddle point structure II)} We have
  \begin{equation*}
  \sup_{E_w} \Phi<\infty \qquad \text{for at least one $w\in E^+ \setminus \{0\}$.}
  \end{equation*}
\end{itemize}
 
The following is the main result of this section. 
\begin{thm}
\label{main-abstract-result}
Suppose that (E1),(E2),(I0)--(I3) and (S1),(S2) are satisfied. Then we have the following:
\begin{itemize}
\item[(i)] The associated Nehari-Pankov set
$$  
\cNP:= \Bigl \{u \in E \setminus (E^{-} \oplus E^0) \::\: \Phi'(u)\Big|_{E^- \oplus E^0 \oplus \R u}= 0
\Bigr\}, $$
is non-empty and contains all non-zero critical points of $\Phi$.    
\item[(ii)] We have  
$$
c:= \inf_{\cNP} \Phi = \inf_{w \in E^+ \setminus \{0\}} \sup_{E_w} \Phi \:>\:0.
$$
Moreover, this value is attained, and every minimizer of $\Phi$ on $\cNP$ is a critical point of $\Phi$. In
particular, there is a ground state solution of the equation $\Phi'(u)=0$,
i.e., a minimizer within the set of non-zero critical points of $\Phi$.
\end{itemize}
\end{thm}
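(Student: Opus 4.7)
The plan is to carry out a nonlinear saddle point reduction in the spirit of Pankov and Szulkin--Weth, adapted to the fact that $E^0$ may be infinite-dimensional and carries only the Banach norm induced by $I$. Concretely, I would first show that for every $w\in E^+\setminus\{0\}$ the restriction $\Phi|_{E_w}$ admits a unique global maximizer $m(w)$, which by (S1) automatically belongs to $\cNP$, and then minimize the reduced functional $w\mapsto\Phi(m(w))$ over the unit sphere of $E^+$, identifying the minimum with $c$ and showing that it is realized at a critical point of $\Phi$ on all of $E$.

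\medskip

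For the construction of $m$, fix $w\in E^+\setminus\{0\}$ and take a maximizing sequence $u_n=t_nw+\phi_n+v_n$ in $E_w$. Assumption (S2) together with the monotonicity of $t\mapsto I(tu)/t^2$ in (I0) is enough, via a homogeneity rescaling, to conclude that $\sup_{E_w}\Phi<\infty$ for every $w\in E^+\setminus\{0\}$ and that $(t_n)$ and $(\|v_n\|)$ are bounded; combining this with (I1) and (I0) then controls $(\phi_n)$ in $E^0$. After extraction, weak compactness of $E$, the uniform convexity of $E^-$ from (E2), and the weak lower semicontinuity clause of (I3) yield a weak limit $u_\ast\in E_w$ at which the supremum is attained. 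Uniqueness and the Nehari--Pankov identity $\Phi'(u_\ast)|_{\R w\oplus E^0\oplus E^-}=0$ are then immediate from (S1). The small-$t$ estimate in (I0) gives $r,\alpha>0$ with $\Phi\geq\alpha$ on $\{u\in E^+:\|u\|=r\}$, so $\Phi(m(w))\geq\alpha$ for all $w$ and hence $c\geq\alpha>0$. The identity $c=\inf_{w\in E^+\setminus\{0\}}\sup_{E_w}\Phi$ is immediate because $\Phi(m(w))=\sup_{E_w}\Phi$ by (S1) and every element of $\cNP$ is of the form $m(u^+)$. Finally, every nonzero critical point lies in $\cNP$: the monotonicity in (I0) implies $\Phi'(u)u\leq-\|u^-\|^2-2I(u)<0$ on $(E^-\oplus E^0)\setminus\{0\}$ in view of $I(u)>0$ on $E^0\setminus\{0\}$.

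\medskip

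The attainment of $c$ is the main obstacle. Choose $u_n\in\cNP$ with $\Phi(u_n)\to c$. The Nehari--Pankov identity $\Phi'(u_n)u_n=0$ and the superquadratic control in (I0) give uniform bounds on $\|u_n^\pm\|$ and on $I(u_n)$, and (I1) also bounds $\|u_n^0\|$. Passing to a weakly convergent subsequence $u_n\weak u$, the positivity $c>0$ combined with the strong continuity property (I2) rules out $u^+=0$. Uniform convexity (E2) then upgrades weak convergence of $u_n^-$ to strong convergence, while the equality clause of (I3) is the crucial ingredient that forces $u_n^0\to u^0$ strongly: if a strict inequality $\liminf I(u_n)>I(u)$ persisted, it would contradict $\Phi(u_n)\to c$ combined with $\Phi(u)=c$, which in turn follows from $u=m(u^+)$ by uniqueness (S1). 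Hence $u_n\to u$ strongly, $u\in\cNP$ and $\Phi(u)=c$. A standard pseudogradient deformation argument on $\cNP$ then upgrades $\Phi'(u)|_{\R u\oplus E^0\oplus E^-}=0$ to $\Phi'(u)\equiv 0$ on all of $E$. The pervasive difficulty throughout is the absence of any compact embedding on $E^0$, so all compactness in the $E^0$-direction has to be extracted from the equality clause of (I3); this is the new ingredient that replaces the finite-dimensionality arguments used in earlier work on the Nehari--Pankov manifold.
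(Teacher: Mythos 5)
Your overall plan (saddle point reduction to a reduced functional on a subset of $E^+$) is broadly the same as the paper's, but there are several genuine gaps, the first of which is fatal in the intended applications.

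First, the claim that (S2) and the monotonicity in (I0) bootstrap to $\sup_{E_w}\Phi<\infty$ \emph{for every} $w\in E^+\setminus\{0\}$ is false. If $I(w)=0$ for some $w\in E^+\setminus\{0\}$, then (I1) and the monotonicity in (I0) give $I(tw)=0$ for all $t>0$ (cf.\ Remark~\ref{rem-B-1}(i)), hence $\Phi(tw)=\frac{t^2}{2}\|w\|^2\to\infty$; such $w$ do exist in exactly the degenerate situations the paper is designed to handle (see Remark~\ref{comparison-remark}(ii): if $q\equiv0$ on an open set, one builds $w\in E^+$ supported there with $I(w)=0$). The paper therefore introduces $\cM^+=\{w\in E^+\setminus\{0\}:\sup_{E_w}\Phi<\infty\}$, which is generally a \emph{proper} open subset of $E^+\setminus\{0\}$, and runs the entire reduction on $\cM^+$. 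Your argument collapses at this point and cannot produce a globally defined $m(w)$.

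Second, the attainment argument via a minimizing sequence $u_n\in\cNP$ is not carried through. The Nehari--Pankov identity together with (I0) only gives $\Phi(u_n)\geq0$, not boundedness; Lemma~\ref{lem:boundedness} requires \emph{strong} convergence of the normalized positive parts $u_n^+/\|u_n\|$ (either to $0$ or to some $w\in\cM^+$), which holds automatically in the paper because each application takes place inside a fixed $E_w$ or with $w_n\to w$ strongly, but is not available for an arbitrary minimizing sequence in $\cNP$. You also invoke uniform convexity (E2) to upgrade weak to strong convergence of $u_n^-$, but that step needs $\|u_n^-\|\to\|u^-\|$ first, which you do not establish. The paper circumvents all of this by minimizing the reduced functional $\Psi(w)=\Phi(\hat m(w))$ over $M^+$ and using weak compactness of the unit sphere in $E^+$ together with the strong continuity (I2) applied to the fixed test elements $tw+z^0+z^-$; that is a much weaker compactness demand and is the step that really makes the infinite-dimensional $E^0$ tractable. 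Relatedly, your mountain-pass type uniform lower bound $\Phi\geq\alpha>0$ on a small sphere in $E^+$ does not follow from the abstract (I0): the $\limsup$ condition is pointwise in $u$, not uniform. The paper obtains $c>0$ a posteriori from $\Psi(w)>0$ at the minimizer.

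Third, your final step, ``a standard pseudogradient deformation argument on $\cNP$,'' presupposes that $\cNP$ carries a $C^1$ manifold structure, which is explicitly \emph{not} assumed here (the paper's footnote flags exactly this point). The paper instead proves that $\cM^+$ is open (Proposition~\ref{prop:Mplus_open}), that $\hat m$ is continuous (Proposition~\ref{prop:hatm}), and that $\Psi\in C^1(\cM^+)$ with $\Psi'(w)[h]=s_w\,\Phi'(\hat m(w))[h]$ (Proposition~\ref{prop:Derivative}); combined with $\hat m(w)\in\cNP$, vanishing of $\Psi'$ at the minimizer then yields $\Phi'(\hat m(w))=0$ on all of $E$ without any deformation theory. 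This is both more elementary and the only route available once $\cNP$ need not be a manifold.
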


The remainder of this section is devoted to the proof of Theorem~\ref{main-abstract-result}. As anticipated in the introduction, we shall make use of a nonlinear saddle point reduction for which the assumptions (S1) and (S2) are essential.
The strategy is as follows. We first define the sets
\begin{equation} \label{eq:defcMplus}
  \cM^+:= \{w\in E^+  \setminus \{0\}: \sup_{E_w} \Phi < \infty\}
\end{equation}
and 
\begin{equation} \label{eq:defMplus}
  M^+:=  \cM^+ \cap S^+,
\end{equation}
where $S^+$ denotes the unit sphere in $E^+$. We then show that for any $w\in \cM^+$ -- and in particular
for any $w\in M^+$ -- the supremum appearing in~\eqref{eq:defcMplus} is positive and it is attained at some
uniquely determined maximizer $\hat m(w)$, which is a nontrivial critical point of the restriction of $\Phi$ to $E_w$ and therefore contained in $\cNP$. Afterwards, we minimize the reduced functional $w \mapsto \Psi(w):=\Phi(\hat m(w))$  on $M^+$ and prove that a minimizer exists. The final and most difficult task in the analysis is to prove that any such minimizer $w$
gives rise to a ground state solution $u = \hat m(w)$ of the equation $\Phi'(u)=0$. This will be done by
showing the continuity of the map $w \mapsto \hat m(w)$ and the Fr\'{e}chet differentiability of the reduced
functional $\Psi$.\\[0.1cm] 

\subsection{Preliminary observations}

In this subsection, we always assume that (E1),(E2) and (I0)--(I3) hold whereas the assumptions (S1),(S2) will
only be needed later. 
We start with the following simple remarks. 

\begin{rem}  \label{rem-B-1}{\rm 
  (i) If $I(u)=0$ for some $u \in E$, then (I1)  gives, inductively, $I (n u)= 0$ for all $n \in \N$. Moreover, the
  monotonicity of $t \mapsto \frac{I(tu)}{t^2}$ assumed in (I0) then implies that $I(tu)=0$ for all $t>0$.\\
  (ii) Assumptions (I0) and (I3) imply that 
  \begin{equation}
    \label{eq:B02}
  \liminf_{u \in E^0, \|u\| \to \infty} \frac{I(u)}{\|u\|^2} >0. 
\end{equation}
Indeed, suppose by contradiction that
\begin{equation}
  \label{eq:contradiction-statement}
\frac{I(u_n)}{\|u_n\|^2} \to 0 
\end{equation}
for a sequence $(u_n)_n$ in $E^0$ with $t_n:= \|u_n\| \ge 1$ for all $n \in \N$ and $t_n \to +\infty$ as $n \to \infty$. Then $v_n:= \frac{1}{t_n}u_n$ satisfies $\|v_n\|=1$ for all $n \in \N$, and we may pass to a subsequence with $v_n \weak v$ in $E^0$. We claim that
\begin{equation}
  \label{eq:B02-help}
I(v_n) \ge c \qquad \text{for all $n \in \N$ with a constant $c>0$,}
\end{equation}
since otherwise $\liminf \limits_{n \to \infty}I(v_n)= 0$, and therefore $I(v)=0$ and $v_n \to v$ strongly by (I3), whereas $v=0$ by (I0). This contradicts the fact that $\|v_n\|=1$ for all $n \in \N$. Hence (\ref{eq:B02-help}) is true, and now 
the monotonicity condition in (I0) implies, since $t_n \ge 1$, that
$$
\frac{I(u_n)}{\|u_n\|^2} = \frac{I(t_n v_n)}{t_n^2} \ge I(v_n) \ge c \qquad \text{for all $n \in \N$,}
$$
in contradiction to (\ref{eq:contradiction-statement}). Hence (\ref{eq:B02}) is true.}
\end{rem}

The next preliminary observation will be used at various places in the remainder of this section. 

\begin{prop} \label{prop:IPositivity}
Let $v\in E$ satisfy $v^+ \in \cM^+$ and $\|v^-\|\leq \|v^+\|$. Then $I(v)>0$.
\end{prop}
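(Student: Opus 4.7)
The plan is to argue by contradiction: assume $I(v)=0$ and derive a violation of the assumption $\sup_{E_{v^+}}\Phi <\infty$ coming from $v^+\in\cM^+$. The first step is to upgrade the single vanishing $I(v)=0$ to vanishing along the whole ray through $v$: by Remark~\ref{rem-B-1}(i), one has $I(tv)=0$ for every $t>0$.

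A naive attempt would be to compute $\Phi(tv)=\tfrac{t^2}{2}(\|v^+\|^2-\|v^-\|^2)$ and let $t\to+\infty$, using $tv\in E_{v^+}$. This succeeds immediately in the strict case $\|v^-\|<\|v^+\|$, but collapses at the critical boundary case $\|v^-\|=\|v^+\|$, where $\Phi(tv)\equiv 0$. Handling this borderline case is the main obstacle, and the proof hinges on adding a carefully chosen perturbation in the $E^+$-direction that both stays inside $E_{v^+}$ and produces a positive quadratic gain while keeping $I$ uniformly controlled.

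Concretely, I would fix an $\varepsilon>0$ and study the one-parameter family
\begin{equation*}
  w_t \;:=\; tv + \varepsilon v^+ \;=\; (t+\varepsilon)v^+ + tv^0 + tv^-, \qquad t>0,
\end{equation*}
which lies in $E_{v^+}=\R^+ v^+\oplus E^0\oplus E^-$ since $t+\varepsilon>0$. Applying the weak subadditivity property (I1) with $\phi=tv$, $\psi=\varepsilon v^+$ (and, say, the choice $\varepsilon'=1$) together with $I(tv)=0$ yields
\begin{equation*}
  I(w_t)\;\leq\; 1 + \kappa_1\bigl(I(tv)+I(\varepsilon v^+)\bigr)\;=\;1+\kappa_1 I(\varepsilon v^+) \;=:\; C,
\end{equation*}
a bound independent of $t$. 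Combining this with $\|v^-\|\leq\|v^+\|$ gives
\begin{equation*}
  \Phi(w_t) \;\geq\; \tfrac{(t+\varepsilon)^2}{2}\|v^+\|^2 - \tfrac{t^2}{2}\|v^+\|^2 - C \;=\; \tfrac{\varepsilon(2t+\varepsilon)}{2}\|v^+\|^2 - C.
\end{equation*}
Since $v^+\in\cM^+\subset E^+\setminus\{0\}$, we have $\|v^+\|>0$, so the right-hand side tends to $+\infty$ as $t\to+\infty$. This contradicts $\sup_{E_{v^+}}\Phi<\infty$, completing the proof.
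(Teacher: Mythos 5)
Your proof is correct and follows essentially the same strategy as the paper: assume $I(v)=0$, upgrade via Remark~\ref{rem-B-1} to $I(tv)=0$ on the ray, then perturb in the $E^+$-direction and use $(I1)$ to bound the nonlinear term uniformly, forcing $\Phi$ to blow up on $E_{v^+}$. The only difference from the paper's proof is cosmetic: the paper splits into the strict case $\|v^-\|<\|v^+\|$ (where the unperturbed ray $tv$ already works) and the borderline case (where it uses the perturbation $nv+v^+$, i.e.\ your $\varepsilon=1$), whereas your $\varepsilon$-perturbation handles both at once.
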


\begin{proof}
Let $w:= v^+ \in M^+$. We argue by contradiction and assume $I(v)=0$.   Then $\|v^-\| < \|v^+\|$
 cannot be true because for $t>0$ we have $tv \in E_w$ and $I(tv)=0$ by Remark~\ref{rem-B-1}. Consequently, 
  $$ 
    \sup_{E_{w}} \Phi
    \geq \Phi(t v) = t^2 \bigl(\|v^+\|^2 - \|v^-\|^2\bigr) \to +\infty
    \quad \text{as }t \to \infty, 
  $$
  which contradicts the fact $w\in \cM^+$, see~\eqref{eq:defcMplus}. The case $\|v^-\|=\|v^+\|$  cannot occur
  either because, for $n \in \N$, we have  $v_n :=  n v + v^+ = (n+1)v^+-v^-\in E_w$
  and $I(n v + v^+) \le 1 + \kappa_1 \bigl(I(nv)+I(v^+)\bigr) = 1+ \kappa_1 I(v^+)$ by (I1) and
  Remark~\ref{rem-B-1}, which implies that
\begin{align*}
   \sup_{E_w} \Phi 
  &\geq \Phi(v_n) 
  =  \frac{1}{2}(\|(n+1)v^+\|^2 - \|n v^-\|^2) 
  - I(nv + v^+)\\
  &\ge   \frac{1}{2} \left((n+1)^2-n^{2}\right) \|v^+\|^2- \kappa_1 I(v^+)-1 \\
  &\ge  n \|v^+\|^2 - \kappa_1
  I(v^+)-1   \to \infty \qquad \text{as }n \to + \infty.
\end{align*}
So this contradicts again  $w \in \cM^+$. Hence we have $I(v)>0$.
\end{proof}

Next we prove a key criterion for the boundedness of sequences in $E$.

\begin{lem}  \label{lem:boundedness}
  Let $(u_n)_n$ be a sequence in $E \setminus \{0\}$ with $\Phi(u_n) \ge 0$ for all $n \in \N$.
  \begin{itemize}
  \item[(i)] If $\frac{u_n^+}{\|u_n\|}\to 0$ as $n \to \infty$, then $(u_n)_n$ is bounded.
  \item[(ii)] If $\frac{u_n^+}{\|u_n\|} \to w$ as $n \to \infty$ for some $w \in \cM^+$, then $(u_n)_n$ is bounded.  
  \end{itemize}
\end{lem}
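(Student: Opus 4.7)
I would argue by contradiction in both parts. Suppose $(u_n)_n$ is unbounded; after passing to a subsequence, set $t_n := \|u_n\| \to \infty$ and $v_n := u_n/t_n$, so that $\|v_n\| = 1$ for every $n$. The hypothesis $\Phi(u_n) \ge 0$ rearranges, after dividing by $t_n^2$, to
\begin{equation*}
  2\,\frac{I(u_n)}{t_n^2} \;\le\; \|v_n^+\|^2 - \|v_n^-\|^2,
\end{equation*}
which, combined with $I\ge 0$, yields both the crucial sign relation $\|v_n^-\|\le \|v_n^+\|$ and the bound $I(u_n)/t_n^2\le \tfrac{1}{2}\|v_n^+\|^2$. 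Passing to a further subsequence using reflexivity of $E$, I may assume $v_n \weak v$, so that $v_n^\pm \weak v^\pm$ and $v_n^0 \weak v^0$ by continuity of the projections.

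For part (i), the assumption $v_n^+ \to 0$ forces $\|v_n^-\| \to 0$ and $I(u_n)/t_n^2 \to 0$. The monotonicity of $t\mapsto I(tv_n)/t^2$ in (I0) then gives $I(v_n) \le I(u_n)/t_n^2 \to 0$ for all $n$ with $t_n\ge 1$. Since $v_n^+, v_n^- \to 0$ strongly, the weak limit $v$ lies in $E^0$; (I3) therefore yields $I(v) = 0$, and \eqref{eq:B02-prelim} forces $v = 0$. But then the equality case in (I3) applies (both $v_n^+\to v^+=0$ and $v_n^-\to v^-=0$ strongly, with $I(v) = \liminf I(v_n) = 0$), producing $v_n^0 \to 0$ strongly along a subsequence. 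This contradicts $\|v_n^0\|\to 1$, which follows from $\|v_n\|^2 = \|v_n^+\|^2+\|v_n^0\|^2+\|v_n^-\|^2 = 1$ together with $\|v_n^\pm\| \to 0$.

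For part (ii), $v_n^+ \to w \in \cM^+$ implies $v^+ = w \ne 0$, while the weak lower semicontinuity of the norm combined with $\|v_n^-\| \le \|v_n^+\|\to \|w\|$ gives $\|v^-\| \le \|w\| = \|v^+\|$. Proposition~\ref{prop:IPositivity} then ensures $I(v)>0$. The decisive step is now a Fatou-type argument at each fixed scale: for every $T\ge 1$ and every $n$ with $t_n\ge T$, the monotonicity in (I0) and the estimate above give
\begin{equation*}
  \frac{I(Tv_n)}{T^2} \;\le\; \frac{I(t_n v_n)}{t_n^2} \;=\; \frac{I(u_n)}{t_n^2} \;\le\; \tfrac{1}{2}\|v_n^+\|^2.
\end{equation*}
Since $Tv_n \weak Tv$ and $(Tv_n)^+ = Tv_n^+ \to Tw = (Tv)^+$ strongly, (I3) lets me pass to the limit $n\to\infty$ and conclude $I(Tv)/T^2 \le \tfrac{1}{2}\|w\|^2$ for every $T\ge 1$. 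This contradicts the superquadratic growth asserted in \eqref{eq:B01}, which, given $I(v)>0$ and $v^+\ne 0$, forces $I(Tv)/T^2 \to +\infty$ as $T\to\infty$.

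The harder of the two parts is (ii): the inequality $\|v^-\|\le \|v^+\|$ extracted from $\Phi(u_n)\ge 0$ is what triggers Proposition~\ref{prop:IPositivity}, while the Fatou-type limit along each fixed rescaling $T$ is what converts the uniform bound on $I(u_n)/t_n^2$ into a uniform upper bound on $I(Tv)/T^2$ that is incompatible with the superquadratic growth of $I$ along the ray through $v$.
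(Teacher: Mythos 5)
Your proof is correct. Part (ii) is essentially the paper's argument: the sign relation $\|v_n^-\|\le\|v_n^+\|$ extracted from $\Phi(u_n)\ge 0$, weak lower semicontinuity passing to $\|v^-\|\le\|v^+\|$, Proposition~\ref{prop:IPositivity} giving $I(v)>0$, and then monotonicity of $t\mapsto I(tu)/t^2$ combined with the superquadratic growth in \eqref{eq:B01} producing the contradiction. Whether one first sends $n\to\infty$ at a fixed scale $T$ and then lets $T\to\infty$, as you do, or inserts $t=s_n$ directly into the chain of inequalities and concludes $\Phi(u_n)\to-\infty$, as the paper does, is only a cosmetic reordering.

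Part (i) is where you genuinely diverge from the paper, and your route is in fact shorter. The paper's proof of (i) first invokes Remark~\ref{rem-B-1}(ii) (i.e., the coercivity estimate \eqref{eq:B02} for $I$ on $E^0$) applied to the sequence $(v_n^0)$, which has $\|v_n^0\|$ bounded away from zero, and then uses the weak subadditivity (I1) to transfer the resulting lower bound on $I(t v_n^0)$ to a lower bound on $I(t v_n)$; this is what makes $\Phi(u_n)\to-\infty$ if $\|u_n\|\to\infty$. Your argument dispenses with (I1) entirely: since $v_n^\pm\to 0$ strongly, you apply (I3) directly to the full sequence $(v_n)$, obtain $v\in E^0$ with $I(v)\le\liminf I(v_n)=0$, conclude $v=0$ from \eqref{eq:B02-prelim}, and then invoke the equality case of (I3) (valid since $v_n^-\to v^-=0$ strongly and $\liminf I(v_n)=0=I(v)$) to force $v_n^0\to 0$ along a subsequence, which contradicts $\|v_n^0\|\to 1$. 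In essence you inline the mechanism behind Remark~\ref{rem-B-1}(ii) but apply it to $v_n$ itself rather than to $v_n^0$, which is possible precisely because $v_n^\pm\to 0$ strongly makes $v_n$ "asymptotically an $E^0$-sequence"; this avoids the $\eps$--$\kappa_\eps$ bookkeeping of (I1). What the paper's formulation buys in exchange is a reusable lemma (\eqref{eq:B02}) about the behaviour of $I$ on $E^0$ alone, stated independently of the boundedness lemma.
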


\begin{proof}
  We first note that the assumption $\Phi(u_n)\ge 0$ implies, thanks to $I\geq 0$ by (I0), that
  $\|u_n^+\| \ge   \|u_n^-\|$ for all $n \in \N$. 
  Setting $v_n = \frac{u_n}{\|u_n\|}$, we then also have 
  \begin{equation}
    \label{eq:boundedness-eq-2}
  \|v_n^+\| \ge   \|v_n^-\|\qquad \text{for all $n \in \N$.}
  \end{equation}
  To prove (i) we first suppose that $v_n^+\to 0$ holds. Then \eqref{eq:boundedness-eq-2}
  gives $v_n^- \to 0$ as $n \to \infty$, which implies that $\|v_n^0\| \to 1$
  as $n \to \infty$. Without   loss of generality, we may assume that $\inf \limits_{n \in \N} \|v_n^0\|>0$.
   Then, by \eqref{eq:B02}, there exists $\eps_0, t_0>0$ with 
\begin{equation}
  \label{eq:first-remark-eq-2}
    \frac{I(t v_n^0)}{t^2} \ge \eps_0 \qquad \text{for all $n \in \N$, $t \ge t_0$.}
\end{equation}
Applying $(I0)$ and $(I1)$ with $\eps = \frac{t_0^2 \eps_0}{2}$ and $\kappa := \kappa_{\eps}$, we find for
every fixed $t>0$
$$
I(t v_n^0) = I\bigl(tv_n  - t(v_n^+ + v_n^-)\bigr) \le \kappa \Bigl(I(t v_n) + I(- t (v_n^+
+ v_n^-))\Bigr)+  \frac{t_0^2 \eps_0}{2} \le \kappa I(t v_n) + \frac{t_0^2 \eps_0}{2}+ o(1) $$
as $n \to \infty$. Hence, for all $t>0$
\begin{equation}
  \label{eq:first-remark-eq-1}
    I(t v_n) \ge \frac{1}{\kappa}\Bigl(I(t v_n^0)-\frac{t_0^2 \eps_0}{2}\Bigr) -o(1) \qquad \text{as $n \to \infty$.}
\end{equation}
Combining (\ref{eq:first-remark-eq-1}) and (\ref{eq:first-remark-eq-2}) yields, by choosing $t=t_0$, 
    $$
    \frac{I(t_0 v_n)}{t_0^2} \ge \frac{1}{\kappa}\Bigl(\frac{I(t_0 v_n^0)}{t_0^2}-\frac{\eps_0}{2}\Bigr)
    -o(1)\ge  \frac{\eps_0}{2 \kappa}-o(1) \qquad \text{as }n \to \infty.
    $$
Now assume for contradiction that, after passing to some
subsequence, $s_n:= \|u_n\| \to \infty$. The monotonicity assumption in $(I0)$ then implies
    $$
    \frac{I(s_n v_n)}{s_n^2} \ge 
    \frac{I(t_0 v_n)}{t_0^2}  \geq \frac{\eps_0}{2 \kappa}-o(1) \qquad \text{as }n \to \infty
    $$
    and therefore
    $$
    \Phi(u_n)= \Phi(s_n v_n) = s_n^2 \Bigl(\frac{\|v_n^+\|^2 - \|v_n^-\|^2}{2} - \frac{I(s_n v_n)}{s_n^2} \Bigr) \le
    s_n^2 \Bigl(o(1)- \frac{\eps_0}{2\kappa}\Bigr)  \to -\infty
    $$
    as $n \to \infty$. This contradicts our assumption $\Phi(u_n)\geq 0$, so $(u_n)_n$ must be bounded and (i)
    is proved. 

    To prove (ii), we assume $v_n^+\to w\in\mathcal M^+$. Since $(v_n)$ is normalized, we may pass to a
    subsequence such that $v_n \weak v$ in $E$, which by assumption (ii) yields 
    $$
    v_n^+ \to v^+ = w, \quad v_n^- \weak v^- \quad \text{and}\quad v_n^0 \weak v^0 \qquad
    \text{as } n \to \infty.
    $$
    It then follows from (\ref{eq:boundedness-eq-2}) that  
  $$
    \|v^-\|\leq \liminf_{n\to\infty} \|v_n^-\|
    \leq \liminf_{n\to\infty} \|v_n^+\|
    = \|v^+\|,
  $$ 
  and Proposition~\ref{prop:IPositivity} gives $I(v)>0$. Moreover, by the weak lower semicontinuity of $I$,
  \begin{equation}
    \label{eq1-lem:boundedness}
  \liminf_{n\to\infty} \frac{I(t v_n)}{t^2} \geq \frac{I(tv)}{t^2} \qquad \text{for every } t>0.
  \end{equation}
  Then $(I0)$ and $I(v)>0$ imply 
  \begin{equation}
    \label{eq2-lem:boundedness}
    \frac{I(tv)}{t^2} \to \infty \qquad \text{as }t \to +\infty.
  \end{equation}
  Supposing again, by contradiction, that $s_n:= \|u_n\| \to \infty$ after passing to a subsequence, we then
  deduce from (\ref{eq1-lem:boundedness}) and the monotonicity condition in {$(I_0)$} that $$
    \frac{I(s_n v_n)}{s_n^2} \ge \frac{I(t v_n)}{t^2} \ge \frac{I(tv)}{t^2} +o(1) \qquad \text{as }
    n \to \infty \text{ for every }t>0 
  $$
  and hence, by (\ref{eq2-lem:boundedness}), 
  $$
    \frac{I(s_n v_n)}{s_n^2} \to \infty \qquad \text{as }n \to \infty.
  $$
  As a consequence,
  \begin{align*}
\limsup_{n \to \infty}\Phi(u_n) = \limsup_{n \to \infty} \Phi(s_n v_n)
&= \frac{s_n^2}{2}\limsup_{n \to \infty}\Bigl((\|v_n^+\|^2-\|v_n^-\|^2) -\frac{I(s_n v_n)}{s_n^2}\Bigr)\\
& \leq \frac{s_n^2}{2}\limsup_{n \to \infty}\Bigl(\|v^+\|^2  -\frac{I(s_n v_n)}{s_n^2}\Bigr) = -\infty,
  \end{align*}
  contrary to our assumption  $\Phi(u_n)\geq 0$. So $(u_n)_n$ is bounded and (ii) is proved.
\end{proof}

Next we show that for $w \in \cM^+$ the supremum of $\Phi$ over $E_w$ is attained.

\begin{lem} \label{lem:existence-maximizer}
  Let $w \in \cM^+$, and let $E_w$ be defined as in~(\ref{eq:def-E-w}).
   
   Then  $\Phi|_{E_w}$ admits a global maximizer $u \in E_w$
satisfying
  \begin{equation}
    \label{eq:lem-existence-maximizer}
    \Phi'(u)[tw+y^0+y^-] = 0 \qquad \text{for all $t\in\R,\;y^0\in E^0,\;y^-\in E^-.$}
  \end{equation}
\end{lem}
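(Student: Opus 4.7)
The strategy is to directly maximize $\Phi$ on $E_w$ via the direct method of calculus of variations, using Lemma~\ref{lem:boundedness} for boundedness of a maximizing sequence, the spectral assumptions (E1), (E2) and the semicontinuity property (I3) for compactness, and finally a first-order argument to derive~\eqref{eq:lem-existence-maximizer}.

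\medskip

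Set $c_w:=\sup_{E_w}\Phi$, which is finite by the assumption $w\in\cM^+$. First I would check that $c_w>0$: for small $t>0$ one has $\Phi(tw)=\tfrac{t^2}{2}\|w\|^2 - I(tw)$, and the first condition in~\eqref{eq:B01} of (I0) gives $\Phi(tw)>0$ for $t>0$ sufficiently small, so $c_w>0$. Next pick a maximizing sequence $u_n=s_n w+u_n^0+u_n^-\in E_w$ with $s_n>0$ and $\Phi(u_n)\to c_w$; in particular $\Phi(u_n)\geq 0$ for large $n$. Since $u_n^+ = s_n w$ is one-dimensional, after passing to a subsequence, $s_n/\|u_n\|\to\alpha$ for some $\alpha\geq 0$. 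If $\alpha=0$, Lemma~\ref{lem:boundedness}(i) yields the boundedness of $(u_n)$. If $\alpha>0$, then $u_n^+/\|u_n\|\to\alpha w$, and since $E_{\alpha w}=E_w$ (because $\alpha>0$), we have $\alpha w\in\cM^+$, so Lemma~\ref{lem:boundedness}(ii) again yields boundedness.

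\medskip

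Thus $(u_n)_n$ is bounded, and by reflexivity of $E$ together with (E1) we may pass to a subsequence with $u_n\weak u$, $u_n^\pm\weak u^\pm$ and $u_n^0\weak u^0$. Because $u_n^+\in\R w$ is one-dimensional, this convergence is in fact strong, i.e.\ $u_n^+\to u^+=sw$ with $s=\lim s_n\geq 0$; in particular $\|u_n^+\|\to\|u^+\|$. By (I3), $I(u)\leq\liminf I(u_n)$, and by weak lower semicontinuity of the norm on $E^-$ we have $\liminf\|u_n^-\|\geq\|u^-\|$. Combining these,
\begin{equation*}
c_w=\lim_{n\to\infty}\Phi(u_n)\leq \tfrac12\|u^+\|^2-\tfrac12\|u^-\|^2-I(u)=\Phi(u).
\end{equation*}
If $s=0$, then $u\in E^0\oplus E^-$ would give $\Phi(u)=-\tfrac12\|u^-\|^2-I(u)\leq 0<c_w$, a contradiction. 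Hence $s>0$, so $u\in E_w$ and $\Phi(u)=c_w$, i.e.\ $u$ is a global maximizer. (As a byproduct, all inequalities above are equalities; in particular $\|u_n^-\|\to\|u^-\|$, which by the uniform convexity (E2) promotes $u_n^-\weak u^-$ to strong convergence, and the equality case of (I3) upgrades $u_n^0\weak u^0$ to strong convergence, so $u_n\to u$ strongly. This is not strictly needed for the present lemma, but will be useful later.)

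\medskip

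It remains to derive~\eqref{eq:lem-existence-maximizer}. Since $s>0$, the point $u$ lies in the relative interior of $E_w$ when viewed inside the affine space $\R w+E^0+E^-$: for any $t\in\R$, $y^0\in E^0$, $y^-\in E^-$, the curve $\varepsilon\mapsto u+\varepsilon(tw+y^0+y^-)$ remains in $E_w$ for all sufficiently small $|\varepsilon|$. Since $\Phi$ attains a maximum at $u$ along this curve, the derivative at $\varepsilon=0$ vanishes, yielding $\Phi'(u)[tw+y^0+y^-]=0$. The only mildly delicate step is the boundedness of the maximizing sequence, since the functional is strongly indefinite and the directions in $E_w$ mix the subspaces $E^\pm$ and $E^0$; the one-dimensionality of $u_n^+=s_n w$ is what makes the dichotomy provided by Lemma~\ref{lem:boundedness} directly applicable.
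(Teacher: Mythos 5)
Your proof is correct and follows essentially the same route as the paper's: positivity of $\sup_{E_w}\Phi$ from (I0), boundedness of the maximizing sequence via the dichotomy in Lemma~\ref{lem:boundedness}, passage to a weak limit using weak lower semicontinuity of the $E^-$-norm and of $I$ via (I3), ruling out $s=0$, and finally the interior first-order condition. The only differences are cosmetic: you spell out why $\alpha w\in\cM^+$ when $\alpha>0$ (which the paper leaves implicit) and record the strong convergence of the maximizing sequence as a byproduct, neither of which changes the argument.
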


\begin{proof} Without loss of generality, we may assume that $\|w\|=1$, so $w \in M^+$. Let $u_n:= t_n w +
u_n^0 + u_n^-$ be a maximizing sequence for $\Phi|_{E_w}$.
Since 
 \begin{equation}\label{eq:Phinear0}
   \sup_{E_w} \Phi
   \geq \Phi(\tau w)=\frac{\tau^2}{2}-I(\tau w)>0 \qquad
   \text{for some small enough } \tau= \tau(w)>0
 \end{equation}
 by assumption (I0), it follows that $\Phi(u_n)$ is positive for almost all $n\in\N$. Moreover, since $\frac{u_n^+}{\|u_n\|} = \frac{t_n}{\|u_n\|} w$ and the sequence $\bigl(\frac{t_n}{\|u_n\|}\bigr)_n$ is bounded, 
 we may pass to a subsequence such that one of the assumptions (i) and (ii) in Lemma~\ref{lem:boundedness} is
 satisfied. Hence it follows that $(u_n)_n$ is bounded, and we may pass to a subsequence with $u_n \weak u$.
 The weak limit is clearly of the form $u=t w + u^0+u^-\in \overline{E_w}$, where $t_n \to t\geq 0$.  Hence, the weak lower semicontinuity
 of the norm and of $I$, as ensured by assumption $(I3)$, imply $$
  \Phi(u)
  = \frac{1}{2}\Bigl(t^2 - \|u^-\|^2\Bigr)- I(u) 
  \ge \lim_{n \to \infty} \frac{1}{2}\Bigl(t_n^2 -  \|u_n^-\|^2\Bigr)- I(u_n) 
  = \lim_{n \to \infty} \Phi(u_n)
  = \sup_{E_w} \Phi, 
  $$
where the right hand side is positive by (\ref{eq:Phinear0}). A posteriori, it follows that $t>0$, since
otherwise $\Phi(u) \le 0$. Hence $u \in E_w$ is a maximizer of $\Phi|_{E_w}$. In particular, it is a critical point of $\Phi|_{E_w}$ and therefore satisfies (\ref{eq:lem-existence-maximizer}). 
\end{proof}

\subsection{The saddle point reduction and ground states}
\label{sec:saddle-point-reduct}

{In this subsection, we always assume that (E1),(E2), (I0)--(I3) and (S1), (S2) hold.}

We first note that the sets $\cM^+$ and $M^+$ defined in (\ref{eq:defcMplus}) and (\ref{eq:defMplus}) are
nonempty by assumption (S2). Combining {Lemma~\ref{lem:existence-maximizer}} with assumption (S1), we also
find that for $w \in \cM^+$ the functional $\Phi|_{E_w}$ admits a {\em unique} global maximizer $\hat m(w) \in E_w$.
As outlined in the beginning of this section, we now introduce the reduced functional
\begin{equation}
  \label{eq:def-Psi}
  \Psi: \cM^+ \to \R, \qquad \Psi(w) := \Phi(\hat m(w)),
\end{equation}
and we put
\begin{equation}
  \label{eq:def-c}
c:= \inf \limits_{\cM^+} \Psi = \inf \limits_{M^+} \Psi. 
\end{equation}
Here we used $\hat m(tw)=\hat m(w)$ for all $t\neq 0$.
Note that $c \ge 0$ as
\begin{equation}
  \label{eq:Phinear0-consequence}
  \Psi(w) = \Phi(\hat m(w))>0 \qquad \text{for all $w \in \cM^+$}
\end{equation}
by (\ref{eq:Phinear0}). The next proposition states that the reduced functional $\Psi$ admits a minimizer in $M^+$.

\begin{prop}\label{prop:conclusion}
The infimum $c$ in (\ref{eq:def-c}) is attained by $\Psi$ on $M^+$.
\end{prop}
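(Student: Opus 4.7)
The plan is to apply the direct method of the calculus of variations to the reduced functional $\Psi$ on $M^+$. I pick a minimizing sequence $(w_n)_n\subset M^+$ with $\Psi(w_n)\to c$ and set $u_n := \hat m(w_n) = t_n w_n + u_n^0 + u_n^-$ with $t_n>0$, where existence of $\hat m(w_n)$ is provided by Lemma~\ref{lem:existence-maximizer} and uniqueness by assumption~(S1). The goal is to extract a weak limit and show that its $E^+$-part, suitably normalized, realizes the infimum $c$.

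The first main step is to prove that $(u_n)$ is bounded in $E$, and this is the place where Lemma~\ref{lem:boundedness} must be exploited. By reflexivity, after passing to a subsequence I may assume $w_n\weak w$ in $E^+$ and $t_n/\|u_n\|\to s\in[0,1]$. If $s=0$ then $u_n^+/\|u_n\|\to 0$ strongly in $E^+$ and Lemma~\ref{lem:boundedness}(i) applies directly, noting that $\Phi(u_n)=\Psi(w_n)>0$ by~\eqref{eq:Phinear0-consequence}. If $s>0$, I first show $w\neq 0$: were $w=0$, assumption~(I2) applied with $u=0$ would give $I(\tau w_n)\to I(0)=0$ for every fixed $\tau>0$, hence
$$
c \;=\; \lim_{n\to\infty}\Psi(w_n) \;\ge\; \Phi(\tau w_n) \;=\; \frac{\tau^2}{2}-I(\tau w_n) \;\longrightarrow\; \frac{\tau^2}{2},
$$
and sending $\tau\to\infty$ contradicts $c<\infty$ (which holds because $M^+\neq\varnothing$ by (S2)). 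With $w\neq 0$ in hand, I intend to combine (I2) with the fact that $u_n$ is a critical point of $\Phi$ restricted to $E_{w_n}$ (in particular $\Phi'(u_n)[w_n]=0$) to upgrade the weak convergence $w_n\weak w$ to strong convergence in $E^+$, so that $u_n^+/\|u_n\|\to s w$ strongly with $s w\in\cM^+$, and Lemma~\ref{lem:boundedness}(ii) then gives boundedness.

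Once boundedness is established, I extract a further subsequence with $u_n\weak u$ in $E$, $t_n\to t\ge 0$, and $u^+=tw$; the non-vanishing argument above still forces $w\neq 0$, so $w_\infty := w/\|w\|\in S^+$ is a legitimate candidate. By weak lower semicontinuity of the norms $\|\cdot\|_\pm$ and of $I$ via~(I3), $\Phi(u)\le\liminf_{n\to\infty}\Phi(u_n)=c$. To obtain the reverse direction $\sup_{E_{w_\infty}}\Phi\le c$, I plan a comparison argument: for arbitrary $v=\sigma w_\infty+\phi^0+\phi^-\in E_{w_\infty}$, I construct admissible approximants $v_n=\sigma_n w_n+\phi^0+\phi^-\in E_{w_n}$ with $\sigma_n w_n\to \sigma w_\infty$ in an appropriate sense, and use (I2) to pass $I(v_n)\to I(v)$; since $\Phi(u_n)\ge\Phi(v_n)$ for every $n$ by the maximizer property on $E_{w_n}$, taking $n\to\infty$ yields $c\ge\Phi(v)$, whence $w_\infty\in M^+$ and $\Psi(w_\infty)\le c$. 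The reverse inequality $\Psi(w_\infty)\ge c$ is immediate from the definition of $c$.

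The main obstacle is the boundedness argument in the regime $s>0$: since (E2) provides uniform convexity of $E^-$ but not of $E^+$, weak convergence $w_n\weak w$ in $E^+$ does not upgrade automatically, and one must leverage the strong continuity property~(I2) jointly with the saddle-point identity $\Phi'(u_n)[w_n]=0$ to gain strong convergence. A second delicate point is the construction of the comparison sequence $v_n\in E_{w_n}$ in the final step, which requires care because $E^0$ is only equipped with the non-Hilbertian norm $\|\cdot\|_0$ adapted to the nonlinearity; here assumption~(I3) must replace any simple norm estimate to pass to the limit in $I$.
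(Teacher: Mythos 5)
Your overall instinct is sound, and your step for ruling out $w=0$ (via (I2) with $u=0$) and your final comparison argument (step with $v_n=\sigma_n w_n+\phi^0+\phi^-$) are essentially the paper's proof. However, the route you take to get there contains a genuine gap and an unnecessary detour, and the two are connected.

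The gap is exactly where you flag the ``main obstacle'': in the regime $t_n/\|u_n\|\to s>0$ you need $w_n\to w$ \emph{strongly} in $E^+$ in order to invoke Lemma~\ref{lem:boundedness}(ii), but you only have $w_n\weak w$, and $E^+$ is not assumed uniformly convex. You propose to recover strong convergence from (I2) together with $\Phi'(u_n)[w_n]=0$, but you do not carry this out, and it is not clear it can be done: (I2) gives continuity of $I$ along weakly convergent $E^+$-sequences, which yields no control on $\|w_n-w\|$, and $\Phi'(u_n)[w_n]=0$ only says $s_{w_n}\|w_n\|^2=I'(u_n)[w_n]$, which again does not separate $\|w_n\|$ from $\|w\|$. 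So as written, the boundedness of $(u_n)$ is not established, and neither therefore is the existence of the weak limit $u$ or the estimate $\Phi(u)\le c$.

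The good news is that none of this is needed. Your final step --- comparing $\Phi$ on $E_{w_n}$ with $\Phi$ on $E_w$ using (I2) and the maximality of $\hat m(w_n)$ --- already closes the argument on its own. Concretely, for any fixed $t>0$, $z^0\in E^0$, $z^-\in E^-$, (I2) gives $I(tw_n+z^0+z^-)\to I(tw+z^0+z^-)$, and since $\|tw_n\|=t\ge \|tw\|$ you get
\begin{align*}
  \Phi(tw+z^0+z^-)
  &\le \tfrac12\bigl(t^2-\|z^-\|^2\bigr)-I(tw+z^0+z^-)
  = \lim_{n\to\infty}\Phi(tw_n+z^0+z^-)\\
  &\le \lim_{n\to\infty}\Phi(\hat m(w_n)) = c,
\end{align*}
hence $\sup_{E_w}\Phi\le c$, so $w/\|w\|\in M^+$ with $\Psi(w/\|w\|)\le c$, which with the definition of $c$ finishes the proof. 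This is the paper's argument: it never extracts a weak limit of $\hat m(w_n)$ and never proves boundedness of $(u_n)$. You should simply delete your steps involving boundedness of $u_n$ and the weak limit $u$; the worry you raise about the non-Hilbertian norm on $E^0$ also evaporates, because $\phi^0$ is held fixed and only $w_n$ varies, so (I2) applies directly with $u=tw+z^0+z^-$.
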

\begin{proof}
  Let $(w_n)$ be a minimizing sequence for $\Psi$ on $M^+$, i.e.,
  $$
    \Psi(w_n) = \Phi(\hat m(w_n)) \to c  \qquad \text{as }n
    \to \infty.
  $$
  Since $\|w_n\|=1$ for all $n$, we may pass to a subsequence with $w_n \rightharpoonup w$ in $E^+$ and
  $\|w\| \le 1$.

  \textbf{Case 1:} $w = 0$. Then $t w_n \rightharpoonup 0$ for every $t>0$ and thus, exploiting the strong
  continuity assumption (I2) with $u=0$, we find that $I(t w_n)  \to 0$ for all $t>0$. Hence, $$
      c + o(1) 
      = \Phi(\hat m(w_n)) \ge  \Phi(t w_n) = \frac{t^2}{2} \|w_n\|^2 - I(t w_n)=
    \frac{t^2}{2} - o(1)
    \qquad \text{as }n\to \infty \text{ for every }t>0.
    $$
    This is a contradiction.
    
    \textbf{Case 2:} $w \not = 0$. For every $z^0 \in E^0$ and $z^- \in E^-$ and $t > 0$ we   have 
    \begin{align*}
    \Phi(t w + z^0+z^-) 
    &= \frac{1}{2}\Bigl(t^2\|w\|^2 - \|z^-\|^2\Bigr) -
                          I(t w + z^0+z^-)\\
      &\le \frac{1}{2}\Bigl(t^2 - \|z^-\|^2\Bigr) -
                                 I(t w + z^0+z^-)\\
      &= \lim_{n \to \infty}\Bigl[\frac{1}{2}\Bigl(\|t w_n\|^2 - \|z^-\|^2\Bigr) -
                                 I(t w_n + z^0+z^-)\Bigr]\\
                        &= \lim_{n \to \infty}    \Phi(t w_n + z^0+z^-) \le \lim_{n \to \infty} \Phi(\hat m(w_n))= c.
    \end{align*}
    Here again we used the strong continuity property from (I2). Given that the inequalities above must be equalities, we deduce
     $\|w\|=1$ as well as
    $$
      \sup_{E_w} \Phi \leq  c.
    $$
    Consequently, $w\in M^+$ and 
    $$
      \Psi(w) = \Phi(\hat m(w))  \leq  c = \inf_{M^+} \Psi.
    $$
    So $w$ is a minimizer of $\Psi$ on $M^+$.
 \end{proof}

 \begin{rem}\label{prop:positivity}{\rm 
A posteriori, it follows from Proposition~\ref{prop:conclusion} and (\ref{eq:Phinear0-consequence}) that 
$$
c = \inf_{M^+} \Psi > 0.
$$
Moreover, writing $\hat m(w)=s_w w + z^0+z^-$ for $w \in \cM^+$, we have
$$
\inf_{w \in M^+} s_w >0. 
$$
Indeed, since $I$ is nonnegative, the second claim follows
 from    
 $$
0< c\leq \Psi(w)
    = \frac{1}{2}(s_w^2-\|z^-\|^2)-I(\hat m(w))
    \leq \frac{1}{2}s_{w}^2 \qquad \text{for all }w \in M^+.
 $$}
\end{rem} 

Next, we show that the minimizer from Proposition~\ref{prop:conclusion} is indeed a ground state
  solution to our problem. In particular we  verify the validity of the Euler-Lagrange equation at this point.  
  
\begin{prop} \label{prop:Mplus_open}
  The set $\cM^+$ is open in $E^+$. Even more, for every $w \in \cM^+$ there exists a neighborhood
  $N$ of $w$ in $E^+$ with 
  $$
  N \subset \cM^+ \qquad \text{and}\qquad \sup_{\tilde w \in N}\Psi(\tilde w)<\infty.
  $$
\end{prop}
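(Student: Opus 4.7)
I would argue by contradiction: assuming the conclusion fails, there exists $w \in \cM^+$ and a sequence $(w_n)_n$ in $E^+$ with $w_n \to w$ such that either $w_n \notin \cM^+$ for infinitely many $n$, or $\Psi(w_n) \to \infty$. In either case I can select $u_n \in E_{w_n}$ with $\Phi(u_n)\ge n$: in the first case using $\sup_{E_{w_n}} \Phi = \infty$, and in the second case taking $u_n = \hat m(w_n)$. Write $u_n = t_n w_n + u_n^0 + u_n^-$ with $t_n>0$, so that $u_n^+ = t_n w_n$.

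\textbf{Forcing $\|u_n\|\to\infty$.} Since $I\ge 0$ by $(I0)$, the bound $\Phi(u_n)\ge n>0$ yields $\|u_n^+\|\ge \|u_n^-\|$ as well as
$$
  n \le \Phi(u_n) \le \tfrac{1}{2}\|u_n^+\|^2 = \tfrac{1}{2}t_n^2 \|w_n\|^2 \le \tfrac{1}{2}\|u_n\|^2,
$$
so $\|u_n\|\to\infty$. Now normalize: set $v_n := u_n/\|u_n\|$ and $s_n := t_n/\|u_n\|$, so that $v_n^+ = s_n w_n$. Since $s_n\|w_n\| = \|v_n^+\|\le 1$ and $w_n\to w$ in $E^+$, after passing to a subsequence we may assume $s_n \to s \ge 0$, which gives $v_n^+ \to s w$ in $E^+$.

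\textbf{Applying the boundedness lemma.} There are two cases. If $s=0$, then $v_n^+ \to 0$, and since $\Phi(u_n)\ge 0$, Lemma~\ref{lem:boundedness}(i) implies that $(u_n)_n$ is bounded, contradicting $\|u_n\|\to\infty$. If $s>0$, note that $E_{sw}=\R^+ w + E^0 + E^- = E_w$, so $sw \in \cM^+$; hence Lemma~\ref{lem:boundedness}(ii) applied with limit $sw \in \cM^+$ again yields that $(u_n)_n$ is bounded, a contradiction. This completes the proof.

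\textbf{Main obstacle.} The only delicate point is ensuring that the proof works simultaneously for the two failure modes (non-membership in $\cM^+$ and unboundedness of $\Psi$), and keeping the reduction clean so that the two branches both feed into Lemma~\ref{lem:boundedness}. The scaling invariance $E_{sw}=E_w$ for $s>0$, together with the fact that $(v_n)_n$ is normalized so $s \in [0,\infty)$ with $s\|w\|\le 1$, ensures that the limiting direction is either zero or still lies in $\cM^+$, which is exactly what the two parts of Lemma~\ref{lem:boundedness} require.
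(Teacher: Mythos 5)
Your proof is correct and follows essentially the same route as the paper's: assume a bad sequence $w_n \to w$, extract $u_n \in E_{w_n}$ with $\Phi(u_n) \to \infty$, normalize so that $u_n^+/\|u_n\|$ converges, and then apply Lemma~\ref{lem:boundedness} to conclude $(u_n)$ is bounded, contradicting $\Phi(u_n)\to\infty$. You spell out two details the paper leaves implicit — that the case $w_n \notin \cM^+$ feeds into the same argument, and that the limiting direction $sw$ lies in $\cM^+$ because $E_{sw}=E_w$ — but the core mechanism is identical.
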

\begin{proof}
  Suppose by contradiction that there exists $w \in \cM^+$ and a sequence of points 
  $w_n \in E^+$ such that $w_n \to w$ and $\Psi(w_n)\to \infty$.
  Then there exist $t_n>0$, $u^0_n \in E^0,u_n^- \in E^-$ with $\Phi(u_n) \to \infty$ for
  $u_n:= t_n w_n + u^0_n + u^-_n$. Since $\frac{u_n^+}{\|u_n\|} = \frac{t_n}{\|u_n\|} w_n$ and the sequence $\bigl(\frac{t_n}{\|u_n\|}\bigr)_n$ is bounded, 
  we may pass to a subsequence such that $\frac{t_n}{\|u_n\|} \to t_* \ge 0$ and thus 
   $\frac{u_n^+}{\|u_n\|} \to t_* w$. Hence one of the assumptions (i) and (ii) of
   Lemma~\ref{lem:boundedness} is satisfied, and it thus follows that $(u_n)_n$ is bounded. But this is
   impossible since, by definition of $\Phi$ and the nonnegativity of $I$, the functional $\Phi$ is bounded
   from above on bounded subsets of $E$. The contradiction gives the claim.
\end{proof}
   
To prove differentiability properties of $\Psi$ on $M^+$ we first verify that $\hat m$ enjoys suitable
continuity properties.

\begin{prop}\label{prop:hatm} ~
  \begin{itemize}
  \item[(i)]   Let $(w_n)_n$ be a sequence in $\cM^+$ which satisfies $\sup \limits_{n \in \N} \Psi(w_n) <
  \infty$ and ${w_n}\to w \in E^+ \sm\{0\}$.
  Then $w \in \cM^+$, $\hat m(w_n)\to \hat m(w)$ and $\Psi(w_n) \to \Psi(w)$ as $n \to \infty$.
\item[(ii)] The map $\hat m$ is continuous on $\cM^+$.  
  \end{itemize}
\end{prop}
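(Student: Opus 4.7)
The plan is to establish (i) first and then derive (ii) from it via Proposition~\ref{prop:Mplus_open}: given $w_n\to w\in\cM^+$, that proposition furnishes a neighborhood $N$ of $w$ in $E^+$ on which $\Psi$ is bounded, so $w_n\in N$ eventually and the hypothesis $\sup_n\Psi(w_n)<\infty$ of~(i) is met. Writing $u_n:=\hat m(w_n)=t_nw_n+u_n^0+u_n^-$ with $t_n>0$ and setting $C:=\sup_n\Psi(w_n)<\infty$, I will prove~(i) in three steps.

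\textit{Step 1 (membership $w\in\cM^+$).} For any $u=tw+z^0+z^-\in E_w$ with $t>0$, approximate by $\tilde u_n:=tw_n+z^0+z^-\in E_{w_n}$. The maximum property gives $\Phi(\tilde u_n)\le\Psi(w_n)\le C$, while (I2) (applied to the fixed element $z^0+z^-\in E$ with $tw_n\to tw$ in $E^+$) combined with $\|w_n\|\to\|w\|$ yields $\Phi(\tilde u_n)\to\Phi(u)$; hence $\Phi(u)\le C$ for every $u\in E_w$, i.e.\ $w\in\cM^+$. \textit{Step 2 (boundedness of $(u_n)_n$).} Remark~\ref{prop:positivity} ensures $\Phi(u_n)\ge c>0$, whence $\|u_n^+\|\ge\|u_n^-\|$. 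If $\|u_n\|\to\infty$ along a subsequence, the bounded ratio $t_n/\|u_n\|$ converges (after extraction) to some $\alpha\ge 0$, so $u_n^+/\|u_n\|\to\alpha w$; the case $\alpha=0$ contradicts Lemma~\ref{lem:boundedness}(i), and since Step~1 together with the cone property of $\cM^+$ gives $\alpha w\in\cM^+$ when $\alpha>0$, Lemma~\ref{lem:boundedness}(ii) rules out that case too.

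\textit{Step 3 (identification and strong convergence).} Extracting $u_n\weak u$ in $E$ and $t_n\to t\ge 0$, one has $u^+=tw$ with $u_n^+\to u^+$ strongly. For any $\tilde u=\tau w+z^0+z^-\in E_w$, approximating as in Step~1 gives $\Phi(\tilde u_n)\to\Phi(\tilde u)$; combining $\Phi(u_n)\ge\Phi(\tilde u_n)$ with weak lower semicontinuity of $\|\cdot\|_{-}^{2}$ and of $I$ (using~(I3)) produces
$$
\Phi(u)\,\ge\,\limsup_n\Phi(u_n)\,\ge\,\liminf_n\Phi(u_n)\,\ge\,\Phi(\tilde u).
$$
Thus $u$ maximizes $\Phi$ on $\overline{E_w}$, which forces $t>0$ (otherwise $\Phi(u)\le 0<\Psi(w)$, the latter holding by Step~1 and~\eqref{eq:Phinear0}); so $u\in E_w$, and uniqueness in~(S1) identifies $u=\hat m(w)$, yielding $\Psi(w_n)\to\Psi(w)$. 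The equality $\limsup_n\Phi(u_n)=\Phi(u)$ then forces, along a further subsequence, $\|u_n^-\|\to\|u^-\|$ and $I(u_n)\to I(u)$; uniform convexity~(E2) upgrades $u_n^-\weak u^-$ to strong convergence, the equality clause in~(I3) yields $u_n^0\to u^0$ strongly, and a standard subsequence argument extends $u_n\to\hat m(w)$ to the whole sequence. I anticipate that the main obstacle will be Step~1: without a priori knowledge that $w\in\cM^+$, Lemma~\ref{lem:boundedness}(ii) is unavailable in Step~2; the crucial observation is that~(I2) transfers the uniform bound $\Psi(w_n)\le C$ into an upper bound of $\Phi$ on $E_w$, securing membership $w\in\cM^+$ before any boundedness of $(u_n)_n$ is known.
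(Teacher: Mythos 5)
Your proposal is correct and follows essentially the same strategy as the paper: first use the strong continuity (I2) to transfer the uniform bound $\sup_n\Psi(w_n)<\infty$ into $\sup_{E_w}\Phi<\infty$ (so that $w\in\cM^+$ is known before any boundedness of $\hat m(w_n)$), then apply Lemma~\ref{lem:boundedness} for boundedness, and finally use the weak lower semicontinuity from (I3) together with the uniqueness furnished by (S1) to identify the weak limit as $\hat m(w)$, upgrading to strong convergence via (E2) and the equality clause of (I3). The organization into three explicit steps and the contradiction phrasing in Step~2 are cosmetic differences from the paper's presentation, which normalizes $w_n$ to the unit sphere and runs a single chain of asymptotic equalities; the underlying ideas coincide.
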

\begin{proof}
(i) Without loss of generality, by normalization and since $\hat m(w_n) = \hat m(\frac{w_n}{\|w_n\|})$, we may assume that $w_n \in M^+$ for every $n \in \N$.  For every $t>0, z^0 \in E^0$ and $z^- \in E^-$ we have, by the strong continuity of $I$ stated in assumption
  (I2),
    \begin{align*}
    \Phi(t w + z^0+z^-) &= \frac{1}{2}\Bigl(t^2\|w\|^2 - \|z^-\|^2\Bigr) -
                          I(t w + z^0+z^-)\\
      &= \lim_{n \to \infty}\Bigl[\frac{1}{2}\Bigl(\|t w_n\|^2 - \|z^-\|^2\Bigr) -
                                 I(t w_n + z^0+z^-)\Bigr]\\
                        &= \lim_{n \to \infty}    \Phi(t w_n + z^0+z^-)\le \liminf_{n \to \infty} \Phi(\hat m(w_n))= \liminf_{n \to \infty}\Psi(w_n).
\end{align*}
This yields $w\in M^+$ as well as $\Psi(w) \le \liminf
\limits_{n \to \infty}\Psi(w_n)<\infty$.  

It now remains to prove that
\begin{equation}
  \label{prop:hatm-eq-1}
\hat m(w_n)\to \hat m(w).  
\end{equation}
Once this is proved, it follows from the continuity of $\Phi$ that
$$
\Psi(w_n)= \Phi(\hat m(w_n)) \to \Phi(\hat m(w))=\Psi(w) \qquad \text{as }n \to \infty.
$$
By a standard argument, it suffices to prove (\ref{prop:hatm-eq-1}) for a subsequence.
As an intermediate step, we prove the weak convergence $\hat m(w_n)\wto \hat m(w)$
after passing to a subsequence. For this we put 
$$
    u_n := \hat m(w_n) = s_{w_n} w_n + u_n^0+u_n^- \quad \text{for }n \in \N\qquad \text{and}\qquad 
    u := \hat m(w) = s_w w + u^0+u^-.
  $$
Then $\Phi(u_n)=\Psi(w_n)>0$ for all $n$ by Remark~\ref{prop:positivity}. Moreover, since
$\frac{u_n^+}{\|u_n\|} = \frac{s_{w_n}}{\|u_n\|} w_n$ and the sequence $\bigl(\frac{s_{w_n}}{\|u_n\|} \bigr)_n$ is 
bounded as $w_n \to w \not = 0$, we may pass to a subsequence such that
$\frac{s_{w_n}}{\|u_n\|} \to s_* \ge 0$ and hence $\frac{u_n^+}{\|u_n\|} \to s_* w$. Hence one of the
assumptions (i) and (ii) of Lemma~\ref{lem:boundedness} is satisfied and it thus follows that $(u_n)_n$ is
bounded. After passing to a subsequence, we may therefore assume that $s_{w_n} \to s$ and $u_n\wto sw+y^0+y^-$ as $n \to \infty$ for some $s\in\R^+, y^0\in E^0, y^-\in E^-$. 
The convergences $s_{w_n}\to s$, $w_n\to w$, $u_n^- \weak {y^-}$, $u_n\wto sw+y^0+y^-$ and the weak lower
semicontinuity of $I$ then imply that
  \begin{align} 
    \Phi(u_n)&= \frac{s_{w_n}^2}{2}\|w_n\|^2- \frac{\| u_n^-\|^2}{2} - I(u_n) \leq \frac{s^2}{2}\|w\|^2- \frac{\|y^-\|^2}{2} - I(sw+y^0+y^-) + o(1)   \label{prop:hatm-eq-2}\\
             &= \Phi(sw+y^0+y^-)  +    o(1)             
             \leq \Phi(\hat m(w)) + o(1)
              \nonumber    \\
             &=\Phi(s_w w + u^0+u^-)+o(1)=\Phi(s_w w_n + u^0+u^-)+o(1) \nonumber\\
    &\le \sup_{E_{w_n}} \Phi +o(1)=  \Phi(u_n)+ o(1) \nonumber
  \end{align}
  as $n \to \infty$. Consequently, all    inequalities in this chain are (asymptotic) equalities, and this
  implies   
  $$
    \Phi(sw+y^0+y^-) =  \Phi(\hat m(w)) =  \Phi(u).
  $$
  Hence the uniqueness property {from assumption (S1)} 
   implies that $u= \hat m(w) = sw+y^0+y^-$, and therefore $u_n
  \wto u$, i.e., $\hat m(w_n)\wto \hat m(w)$. 
    \smallskip
  
  To prove strong convergence, we note that the equality in (\ref{prop:hatm-eq-2}) 
  and assumption $(E2)$ imply that
  \begin{equation}
  \label{prop:hatm-eq-3}  
  u_n^- \to u^- = y^- \quad \text{strongly in $E$}\qquad \text{and} \qquad I(u_n) \to I(u) = I(sw+y^0+y^-)
  \end{equation}
  as $n \to \infty$. So we know $u_n^+ = s_{w_n}w_n^+\to s_ww=u^+$, $u_n^-\to u^-$ and $I(u_n)\to I(u)$. Hence we deduce from assumption $(I3)$ that
  \begin{equation}
  \label{prop:hatm-eq-4}  
  u_n^0 \to u^0\qquad \text{strongly in $E$.}
\end{equation}
Since also $u_n^+ = s_{w_n} w_n \to s w = u^+$ strongly in $E$, we derive from (\ref{prop:hatm-eq-3}) and (\ref{prop:hatm-eq-4}) that
$$
\hat m(w_n)= u_n \to u= \hat m(w)\qquad \text{strongly in }E,
$$
as claimed in (i).\\
(ii) To see the continuity of $\hat m$ on $\cM^+$, let $w \in \cM^+$, and let $(w_n)_n$ be a sequence in
$\cM^+$ with $w_n \to w$. Then we have $(\Psi(w_n))_n$ is bounded by Proposition~\ref{prop:Mplus_open},
so (i) implies that $\hat m(w_n) \to \hat m(w)$ as $n \to \infty$.
\end{proof}

\begin{prop}\label{prop:Derivative}
We have $\Psi\in C^1(\cM^+)$ with $\Psi'(w)[h]=  s_w \Phi'(\hat m(w))[h]$ for all $w\in \cM^+, h\in E^+$, where $s_w$ is defined in Remark~\ref{prop:positivity}.   
\end{prop}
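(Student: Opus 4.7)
The plan is to compute the directional derivative of $\Psi$ at any $w \in \cM^+$ along an arbitrary direction $h \in E^+$ by a two-sided sandwich that exploits the maximizing property of $\hat m$ at both $w$ and $w+th$, and then to upgrade the resulting Gâteaux derivative to $C^1$-regularity by invoking Proposition~\ref{prop:hatm}.

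First I fix $w \in \cM^+$ and $h \in E^+$. Since $\cM^+$ is open by Proposition~\ref{prop:Mplus_open} and $w \neq 0$, the point $w+th$ lies in $\cM^+$ and is nonzero for all sufficiently small $|t|$. Writing $\hat m(w) = s_w w + z^0 + z^-$ and $\hat m(w+th) = s_t(w+th) + z^0_t + z^-_t$ with $s_w, s_t > 0$ by Remark~\ref{prop:positivity} and the continuity of $\hat m$, the key algebraic observation is
\[
\hat m(w) + s_w\, t\, h \;=\; s_w(w+th) + z^0 + z^- \;\in\; E_{w+th},
\qquad
\hat m(w+th) - s_t\, t\, h \;=\; s_t w + z^0_t + z^-_t \;\in\; E_w.
\]
The maximizing property of $\hat m(w+th)$ on $E_{w+th}$ and of $\hat m(w)$ on $E_w$ then yields, for such $t$,
\[
\Phi\bigl(\hat m(w) + s_w t h\bigr) - \Phi(\hat m(w))
\;\le\; \Psi(w+th) - \Psi(w)
\;\le\; \Phi(\hat m(w+th)) - \Phi\bigl(\hat m(w+th) - s_t t h\bigr).
\]

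I then divide by $t$ and invoke the mean value theorem on both extremes to produce intermediate points on segments that collapse to $\hat m(w)$ as $t \to 0$. Proposition~\ref{prop:hatm} delivers $\hat m(w+th) \to \hat m(w)$; applying $P^+$ yields $s_t(w+th) \to s_w w$, and since $w \neq 0$ this forces $s_t \to s_w$. Combined with continuity of $\Phi' : E \to E^*$ granted by $\Phi \in C^1(E)$, both extremes of the sandwich converge to $s_w \Phi'(\hat m(w))[h]$, so the Gâteaux derivative of $\Psi$ at $w$ in direction $h$ exists and equals $s_w \Phi'(\hat m(w))[h]$; the case $t < 0$ is analogous.

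To upgrade this to $\Psi \in C^1(\cM^+)$ it suffices to show that the map $w \mapsto s_w \Phi'(\hat m(w))\big|_{E^+}$ is continuous from $\cM^+$ into $(E^+)^*$, since continuity of the Gâteaux derivative implies Fréchet differentiability and $C^1$-regularity. This continuity follows from Proposition~\ref{prop:hatm}(ii), continuity of $\Phi'$ on $E$, and continuity of $w \mapsto s_w$ obtained exactly as above. The main technical point is the bracketing step itself: one must verify that the comparison elements $\hat m(w) + s_w t h$ and $\hat m(w+th) - s_t t h$ genuinely lie in the half-space domains $E_{w+th}$ and $E_w$ respectively. This is precisely where the strict positivity $s_w > 0$ from Remark~\ref{prop:positivity} (together with the induced lower bound on $s_t$ via the continuity of $\hat m$) is indispensable, and where the openness of $\cM^+$ is invoked.
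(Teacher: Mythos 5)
Your proof is correct and follows essentially the same route as the paper: bracket $\Psi(w+th)-\Psi(w)$ between $\Phi$-increments using the competitor elements $\hat m(w)+s_w t h \in E_{w+th}$ and $\hat m(w+th)-s_t t h \in E_w$, pass to the limit via the mean value theorem, the continuity of $\hat m$ from Proposition~\ref{prop:hatm}, and the continuity of $\Phi'$, then upgrade the Gâteaux derivative to Fréchet $C^1$-regularity by continuity of $w\mapsto s_w\Phi'(\hat m(w))|_{E^+}$. The only cosmetic difference is that you present the two-sided bound as a single sandwich, whereas the paper phrases it as separate $\liminf$ and $\limsup$ estimates.
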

\begin{proof}
  Let $w \in \cM^+$ be fixed, and let $h \in E^+$.  By Proposition~\ref{prop:Mplus_open}, there exists $\eps=
  \eps(h)>0$ with $w+th\in \cM^+$ for $t\in \R$ with $|t|<\eps$. Moreover, we have $\hat m(w) + ts_w h \in
  E_{w+th}$ in this case since $\bigl(\hat m(w) + ts_w h\bigr)^+ = s_w (w + th)$. Thus for $t\in \R$ with $0< |t|<\eps$ we have 
  \begin{align*}
    \frac{1}{t}\left(\Psi(w + th)-\Psi(w)\right)
    &= \frac{1}{t}\left(\Phi(\hat m(w + th))-\Phi(\hat m(w))\right) \\
    &\geq \frac{1}{t}\left(\Phi(\hat m(w) + ts_w h)-\Phi(\hat m(w))\right) \\
    &=  \Phi'(\hat m(w)+\tau_t t s_wh)[s_w h] \qquad \text{with $\tau_t\in (0,1)$ chosen suitably.}
 \end{align*}
 Consequently,
 \begin{equation*}
 \liminf_{t \to 0}\frac{1}{t}\left(\Psi(w + th)-\Psi(w)\right) \ge s_w \Phi'(\hat m(w))[h].  
 \end{equation*}
 For the upper bound we note, similarly, that $\hat m(w+th) - t s_{w+th}h  \in E_w$ and therefore, for $0< |t|< \eps$,
  \begin{align*}
    &\frac{1}{t}\left(\Psi(w+th)-\Psi(w)\right) \\
    &\leq \frac{1}{t}\Big(\Phi(\hat m(w+th))-\Phi\big(\hat m(w+th) - t s_{w+th}h\big)\Big) \\
    &=   \Phi'(\hat m(w+th)-\tau_t t s_{w+th}h)[s_{w+th}h] 
\qquad \text{with $\tau_t\in (0,1)$ chosen suitably.}
  \end{align*}
  Since $\hat m(w+th)\to \hat m(w)$ and therefore $s_{w+th}\to s_w$ as $t \to 0$ by Proposition~\ref{prop:hatm}(ii), it follows that
 \begin{equation}
   \label{eq:gateaux-upper}
 \limsup_{t \to 0}\frac{1}{t}\left(\Psi(w + th)-\Psi(w)\right) \le s_w \Phi'(\hat m(w))[h].  
 \end{equation}
Here we used the fact that $\Phi': E \to E^*$ is continuous, which follows from $(E1)$ and $(I0)$.  Since the lower
bound coincides with the upper bound, one finds that the G\^{a}teaux-derivative of $\Psi$ at $w$ exists and
is given by 
$$
  \Psi'(w)[h] =   \lim_{t\to 0^+} \frac{1}{t}\left(\Psi(w+th)-\Psi(w)\right) =  s_w \Phi'(\hat m(w))[h].
$$
Since the map $\cM^+ \to \bigl(E^+\bigr)^*$, $w \mapsto s_w \Phi'(\hat m(w))\big|_{E^+}$ is continuous, it
then  follows that $\Psi \in C^1(\cM^+)$ with Fr\'{e}chet derivative given by the above formula. 
  This finishes the proof.
\end{proof}

\begin{prop}\label{prop:groundstate}
If $w \in \cM^+$ satisfies $\Psi(w)= c$ with $c$ given in (\ref{eq:def-c}), then $u_* := \hat m(w)$ is a ground state solution of the equation $\Phi'(u_*)=0$, i.e., it is a least energy nontrivial critical point of $\Phi$.
\end{prop}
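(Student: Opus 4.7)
My plan is to verify that $u_* := \hat m(w)$ is a nontrivial critical point of $\Phi$ and, in a second step, that no other nontrivial critical point has strictly smaller energy.

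\textbf{Criticality.} By Lemma~\ref{lem:existence-maximizer}, $\Phi'(u_*)$ already vanishes on $\R w \oplus E^0 \oplus E^-$, so only $\Phi'(u_*)|_{E^+}=0$ remains to be checked. I would use that $\Psi$ is positively homogeneous in the sense $\hat m(tw)=\hat m(w)$ for every $t>0$, which together with the assumption $\Psi(w)=c=\inf_{M^+}\Psi$ yields $\Psi(\tilde w)=\Psi(\tilde w/\|\tilde w\|)\geq c=\Psi(w)$ for every $\tilde w\in \cM^+$. Hence $w$ is a global minimizer of $\Psi$ on the open set $\cM^+\subset E^+$ (Proposition~\ref{prop:Mplus_open}), whence $\Psi'(w)=0$ in $(E^+)^*$. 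Proposition~\ref{prop:Derivative} then gives $0=\Psi'(w)[h]=s_w\,\Phi'(u_*)[h]$ for all $h\in E^+$, and since $s_w>0$ by Remark~\ref{prop:positivity} we conclude $\Phi'(u_*)=0$ on the entire space $E$. Moreover $u_*\neq 0$ because $\Phi(u_*)=\Psi(w)=c>0$.

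\textbf{Minimality.} Let $v$ be any nontrivial critical point of $\Phi$. I first claim $v^+\neq 0$. Indeed, if $v^+=0$, then testing $\Phi'(v)=0$ against $v$ yields $I'(v)[v]=-\|v^-\|^2\leq 0$. On the other hand, the monotonicity of $t\mapsto I(tv)/t^2$ from $(I0)$, evaluated at $t=1$, translates into the Nehari-type inequality $I'(v)[v]\geq 2I(v)\geq 0$. These two bounds force $v^-=0$ and $I(v)=0$; then $v\in E^0$ and the strict positivity property (\ref{eq:B02-prelim}) imply $v=0$, contradicting nontriviality.

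Hence $v^+\in E^+\setminus\{0\}$ and $v\in E_{v^+}$. Since $\Phi'(v)=0$ in particular restricts to zero on $\R v^+\oplus E^0\oplus E^-$, $v$ is a critical point of $\Phi|_{E_{v^+}}$ in the sense of $(S1)$, so $(S1)$ identifies $v$ as the unique global maximizer of $\Phi$ on $E_{v^+}$. In particular $\sup_{E_{v^+}}\Phi=\Phi(v)<\infty$, so that $v^+\in \cM^+$ and $v=\hat m(v^+)$. Therefore $\Phi(v)=\Psi(v^+)\geq \inf_{\cM^+}\Psi=c=\Phi(u_*)$, which is the desired ground state property.

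The main technical obstacle I anticipate is the step establishing $v^+\neq 0$ for every nontrivial critical point, which is not isolated as a separate lemma in the preceding material: it hinges on converting the nondecreasing character of $t\mapsto I(tv)/t^2$ in $(I0)$ into the differential inequality $I'(v)[v]\geq 2I(v)$ and combining it with the strict positivity of $I$ on $E^0\setminus\{0\}$. Everything else follows cleanly from $(S1)$, Proposition~\ref{prop:Derivative} and the reduction already developed.
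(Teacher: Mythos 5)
Your proof is correct and follows essentially the same route as the paper: criticality via Proposition~\ref{prop:Derivative}, Remark~\ref{prop:positivity}, and Lemma~\ref{lem:existence-maximizer}, then minimality by showing every nontrivial critical point $v$ has $v^+\neq 0$ and hence lies in the range of $\hat m$ via $(S1)$. The only cosmetic difference is in the $v^+\neq 0$ step, where you test $\Phi'(v)=0$ against $v$ to get $I'(v)[v]\le 0$ and combine with the Nehari inequality $I'(v)[v]\ge 2I(v)$, while the paper differentiates $t\mapsto\Phi(tv)$ at $t=1$; these are algebraically equivalent, and both then invoke $I>0$ on $E^0\setminus\{0\}$ to force $v=0$.
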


\begin{proof}
  By Proposition~\ref{prop:Derivative} and the minimizing property of $w$, we have $\Psi'(w)[h]=s_w \Phi'(\hat m(w))[h]=0$ for all $h \in E^+$. 
  Since Remark~\ref{prop:positivity} implies $s_w>0$, we thus infer $\Phi'(\hat m(w))[h]=0$ for all $h\in
  E^+$. Combining this information with Lemma~\ref{lem:existence-maximizer} gives $\Phi'(\hat m(w))[h]=0$ for all $h\in E$, which shows
  that $u_*:= \hat m(w)$ is a critical point of $\Phi$.

  To see that $u_*$ is a least energy nontrivial critical point of $\Phi$, let $u$ be any other nontrivial critical point of $\Phi$. Then $u^+ \not =0$, since in case $u^+ =0$ we would have
  \begin{align*}
    0 &= \frac{d}{dt}\Big|_{t=1}\Phi(tu)=-\frac{d}{dt}\Big|_{t=1}\Bigl(\|(tu)^-\|^2+ I(tu)\Bigr)
    = -\frac{d}{dt}\Big|_{t=1}t^2 \Bigl(\|u^-\|^2+ \frac{I(tu)}{t^2}\Bigr) \\
    &= -2\Bigl(\|u^-\|^2+ I(u)\Bigr) 
      - \frac{d}{dt}\Big|_{t=1}\frac{I(tu)}{t^2}
    \le -2   \Bigl(\|u^-\|^2+ I(u) \Bigr)
  \end{align*}
as a consequence of $(I0)$, which would imply that $u^-=0$ and $I(u)=I(u^0)=0$.
It then follows from (I0) that $u=u^0=0$, a contradiction.   Hence $u^+ \not = 0$, and then $(S1)$
implies that $u^+ \in \cM^+$ and $u=\hat m(u^+)$. Hence, 
$$
  \Phi(u)= \Phi(\hat m(u^+))=\Psi(u^+)\geq c = \Psi(w)=\Phi(u_*)
 $$   
 shows that $u_*$ is a least energy nontrivial critical point of $\Phi$.
 \end{proof}

 \subsection{Completion of the proof of Theorem~\ref{main-abstract-result}}

 In this subsection we complete the proof of Theorem~\ref{main-abstract-result}, assuming (E1),(E2),(I0)--(I3) and (S1),(S2) in the following. To prove Part (i), we recall the definition 
$$  
\cNP:= \Bigl \{u \in E \setminus (E^{-} \oplus E^0) \::\: \Phi'(u)\Big|_{E^- \oplus E^0 \oplus \R u}= 0
\Bigr\}
$$
of the associated Nehari-Pankov set. As noted at the beginning of Subsection~\ref{sec:saddle-point-reduct}, it
follows from $(S2)$ that the set $M^+$ is nonempty, while Lemma~\ref{lem:existence-maximizer} and $(S1)$
imply that for every $w \in M^+$ the functional $\Phi|_{E_w}$ admits a unique global maximizer $u = \hat m(w)
\in E_w$ which satisfies 
$$
  \Phi'(u)\Big|_{E^- \oplus E^0 \oplus \R u}=0.
$$ 
This implies $\hat m(w) \in \cNP$ for every $w \in M^+$, so in particular the set $\cNP$ is nonempty.
Moreover, as in the proof of Proposition~\ref{prop:groundstate} we see that every nontrivial critical point $u$ of $\Phi$ satisfies $u^+ \not = 0$ and is therefore contained in $\cNP$.
This finishes the proof of Part~(i).

To prove Part (ii), we first note that the map $\hat m: M^+ \to \cNP$ is continuous by Proposition~\ref{prop:hatm}. In fact it is a homeomorphism with inverse given by the continuous map $\cNP \to M^+$, $u \mapsto \frac{u^+}{\|u^+\|}$. Moreover, by definition of the reduced functional $\Psi$ in (\ref{eq:def-Psi}) and Proposition~\ref{prop:conclusion}, it follows that the value
$$
c= \inf_{M^+}\Psi= \inf_{\cNP} \Phi = \inf_{w \in E^+ \setminus \{0\}} \sup_{E_w} \Phi 
$$
is attained on $M$, and every minimizer of $\Phi$ on $\cNP$ corresponds to a minimizer of $\Psi$ on $M^+$ and is therefore a least energy critical point of $\Phi$ by Proposition~\ref{prop:groundstate}. We also note that $c>0$ by Remark~\ref{prop:positivity}. The proof of Part (ii) is thus finished, and this completes the proof of Theorem~\ref{main-abstract-result}. 

 \subsection{Some useful criteria}

 We close this section with two useful criteria related to our assumptions $(S1)$ and $(S2)$. {Throughout this
 subsection, we always assume that (E1),(E2) and (I0)--(I3) hold.} We start with a criterion for a function $w$ to be contained in the set $\cM^+$ defined in (\ref{eq:defcMplus}).

 \begin{lem}\label{lem:criterionB3}
Let $w\in E^+ \setminus \{0\}$ satisfy  
   \begin{equation}  \label{eq:eigenvalue-alternative}
  \| v^-\| > \|w\| \qquad \text{for all } v \in E^0 \oplus E^- \text{ with } I(w+v)=0. 
\end{equation}
     Then $\sup_{E_w} \Phi<\infty$, i.e., $w \in \cM^+$. In particular, $(S2)$ holds if such a function $w$ exists.   
   \end{lem}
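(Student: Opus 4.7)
The plan is to argue by contradiction: assume $\sup_{E_w}\Phi=+\infty$ and extract a sequence $u_n=t_nw+v_n^0+v_n^-\in E_w$ with $\Phi(u_n)\to\infty$. The strategy is to perform two successive rescalings and produce, in the limit, either a configuration that contradicts the geometric hypothesis on $w$ or a nonzero element of $E^0$ on which $I$ vanishes, contradicting~\eqref{eq:B02-prelim}.

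\emph{First rescaling.} Since $I\geq 0$, the condition $\Phi(u_n)\to\infty$ forces $t_n^2\|w\|^2-\|v_n^-\|^2\to\infty$, and hence $t_n\to\infty$. Writing $\bar u_n:=u_n/t_n=w+\bar v_n^0+\bar v_n^-$, the bound $\Phi(u_n)\geq 0$ yields $\|\bar v_n^-\|\leq\|w\|$, while the monotonicity of $t\mapsto I(t\bar u_n)/t^2$ from~(I0) produces the crucial a priori estimate
\[
I(\bar u_n)\;\leq\;\frac{I(u_n)}{t_n^2}\;\leq\;\frac{1}{2}\bigl(\|w\|^2-\|\bar v_n^-\|^2\bigr)\;\leq\;\frac{\|w\|^2}{2}.
\]

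\emph{Case 1: $(\bar v_n^0)$ bounded.} Then $(\bar u_n)$ is bounded in $E$, and along a subsequence $\bar u_n\wto\bar u=w+\bar v^0+\bar v^-$ with $\|\bar v^-\|\leq\|w\|$ by weak lower semicontinuity of the norm. The key point is to show $I(\bar u)=0$: if instead $I(\bar u)>0$, the superlinear limit in~\eqref{eq:B01} (applicable because $\bar u^+=w\neq 0$) supplies $s_0>0$ with $I(s_0\bar u)>s_0^2\|w\|^2/2$, while the lower semicontinuity clause of~(I3) applied to $s_0\bar u_n\wto s_0\bar u$ (whose $+$-parts are identically $s_0w$) gives $\liminf_n I(s_0\bar u_n)\geq I(s_0\bar u)>s_0^2\|w\|^2/2$. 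On the other hand, monotonicity in~(I0) together with the a priori estimate above yields $I(s_0\bar u_n)\leq s_0^2 I(u_n)/t_n^2\leq s_0^2\|w\|^2/2$ as soon as $t_n\geq s_0$, a contradiction. Hence $I(\bar u)=0$, and the hypothesis on $w$ forces $\|\bar v^-\|>\|w\|$, contradicting $\|\bar v^-\|\leq\|w\|$.

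\emph{Case 2: $(\bar v_n^0)$ unbounded -- the main obstacle.} After passing to a subsequence, $\rho_n:=\|\bar u_n\|\to\infty$; set $\tilde u_n:=\bar u_n/\rho_n$. Since $\bar u_n^+=w$ is constant and $(\bar v_n^-)$ is bounded, $\tilde u_n^+\to 0$ and $\tilde u_n^-\to 0$ strongly, while $\|\tilde u_n^0\|\to 1$. Monotonicity and the a priori estimate above give $I(\tilde u_n)\leq I(\bar u_n)/\rho_n^2\to 0$. A weak subsequential limit $\tilde u$ of $(\tilde u_n)$ therefore lies in $E^0$, and~(I3) yields $I(\tilde u)\leq\liminf I(\tilde u_n)=0$, whence $\tilde u=0$ by~\eqref{eq:B02-prelim}. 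Equality therefore holds in the lower semicontinuity inequality of~(I3), and its strict clause produces $\tilde u_n^0\to 0$ strongly along a further subsequence -- in direct contradiction with $\|\tilde u_n^0\|\to 1$. This last step is the technically delicate one: the hypothesis on $w$ gives no purchase on the $E^0$-component once the first rescaling loses control, and one is forced to invoke the equality clause of~(I3), which is built into the abstract framework precisely to handle such scenarios.
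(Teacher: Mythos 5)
Your argument is correct. The outcome is the same as the paper's proof, but the route is noticeably different in one place, so it is worth recording the comparison. You perform a first rescaling to $\bar u_n = u_n/t_n$ (the paper introduces the analogous $y_n^0, y_n^-$) and then split according to whether $(\bar v_n^0)$ is bounded. Your Case 1 essentially matches the core of the paper's proof: extract a weak limit $\bar u = w+\bar v^0+\bar v^-$ with $\|\bar v^-\|\le\|w\|$, show $I(\bar u)>0$ is impossible via the superlinear limit in \eqref{eq:B01} combined with the monotonicity in (I0) and the lower semicontinuity in (I3), and then conclude from $I(\bar u)=0$ and the hypothesis \eqref{eq:eigenvalue-alternative} that $\|\bar v^-\|>\|w\|$, a contradiction. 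The difference lies in how the potential unboundedness of the $E^0$-component is ruled out. The paper does not case-split; it invokes Lemma~\ref{lem:boundedness}(i) to establish that $\inf_n t_n\|u_n\|^{-1}>0$, which forces $(y_n^0)$ to be bounded, and Lemma~\ref{lem:boundedness}(i) in turn rests on \eqref{eq:B02}, itself derived in Remark~\ref{rem-B-1}(ii) from (I0) and the equality clause of (I3). You instead treat the unbounded case (your Case~2) by a second rescaling $\tilde u_n=\bar u_n/\|\bar u_n\|$, observing that the $+$ and $-$ components vanish in the limit, deducing $I(\tilde u_n)\to 0$ from monotonicity and the a priori bound $I(\bar u_n)\le\|w\|^2/2$, and then invoking the equality clause of (I3) directly to force $\tilde u_n^0\to 0$, contradicting $\|\tilde u_n^0\|\to 1$. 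So the ultimate tool -- the strict clause of (I3) -- is the same, but the paper factors it through Lemma~\ref{lem:boundedness}, keeping the proof of Lemma~\ref{lem:criterionB3} short, whereas your version is self-contained but re-derives a special instance of that lemma inside the argument. Minor point: your assertion that $\Phi(u_n)\ge 0$ yields $\|\bar v_n^-\|\le\|w\|$ should be stated for $n$ large enough (since $\Phi(u_n)\to\infty$ only gives this eventually), but this does not affect the argument.
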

 \begin{proof}
   We argue by contradiction and assume that there exist $t_n>0$, $z^0_n \in E^0, z_n^- \in E^-$ with
   $\Phi(u_n) \to \infty$ for $u_n:= t_n w + z^0_n + z^-_n$. Setting $y_n^0:= \frac{z^0_n}{t_n}$ and $y_n^-:=
   \frac{z^-_n}{t_n}$, we thus note that 
   $$
    \Phi(u_n) = \frac{t_n^2}{2}(\|w\|^2- \|y_n^-\|^2) - I\bigl(t_n ( w +  y^0_n + y^-_n)\bigr)
   $$
  Since $I$ is nonnegative, we  must have $\|y_n^-\|<\|w\|$ and $t_n\to\infty$. 

  \medskip
  
  Set $C:=\inf_{n \in \N} t_n\|u_n\|^{-1}$ and we claim $C>0$.
  Indeed, otherwise we would have $\frac{u_n^+}{\|u_n\|} \to 0$ after passing to some subsequence and
  therefore $(u_n)_n$ is bounded by Lemma~\ref{lem:boundedness}(i), which contradicts the fact that $\Phi$ is bounded from above on bounded
  sets.  This implies that the sequence $(y^0_n)_n$ is bounded as well since 
$$    
\|y_n^0\| = \frac{\|z^0_n\|}{t_n} \le \frac{\|z^0_n\|}{C \|u_n\|} \le \frac{1}{C}.
$$
We may thus assume that $y_n^- \rightharpoonup y^-$ and $y_n^0 \rightharpoonup y^0$, $y:=y^0+y^-$. If  we had
  $I(w+ y)=0$,  then we would have $\|y_n^-\|\geq \|y^-\|+o(1) > \|w\|$ by \eqref{eq:eigenvalue-alternative} for
  large $n$, and therefore 
  $$
    \Phi(u_n)
    \le  \frac{t_n^2}{2}(\|w\|^2- \|y_n^-\|^2) < 0 \qquad (n\to\infty), 
  $$
  a contradiction. Hence $I(w+ y)>0$, and for any given $t>0$ we have, for $n$ sufficiently large, by the
  monotonicity condition in $(I0)$ and the weak lower semicontinuity in $(I3)$,
  \begin{align*}
    t_n^{-2} \Phi(u_n) 
    &= \frac{1}{2}(1- \|y_n^-\|^2) - t_n^{-2} I\bigl(t_n ( w +  y^0_n + y^-_n)\bigr) \\
    &\le \frac{1}{2}- t^{-2} I\bigl(t ( w +  y^0_n + y^-_n)\bigr) + o(1)\\
    &\leq \frac{1}{2}- t^{-2} I\bigl(t ( w +  y)\bigr) + o(1).
  \end{align*} 
  Moreover, since $I(w+ y)>0$, it follows from $(I0)$ that $t^{-2} I\bigl(t ( w +  y)\bigr)> 1$ for $t$ sufficiently large, which then implies that
  $$
  t_n^{-2} \Phi(u_n) \le - \frac{1}{2} + o(1) \qquad \text{as }n \to \infty,
  $$
  a contradiction. The proof is thus finished. 
 \end{proof}

Note that \eqref{eq:eigenvalue-alternative} holds in the simple situation where $I(u)=0,u\in E$ implies 
$u=0$. Indeed, in that case no $v\in E^0\oplus E^-$ as in \eqref{eq:eigenvalue-alternative} exists and there
is nothing to prove. Next, we add a criterion for condition $(S1)$. 

 \begin{prop} \label{prop:criterionB2I}
   Assume that for any given $u\in E$ with $I(u)>0$ we have 
   \begin{equation}\label{eq:criterionB2I}
     I(u)-I((1+s)u+v)+I'(u)[s(s/2+1)u+(1+s)v]\leq 0
   \end{equation}
   for all $v\in E^0\oplus   E^-$ and $s \ge -1$, with strict inequality whenever $v\in E^0$ and $su+v\neq 0$.
   Then $(S1)$ is satisfied.
  \end{prop}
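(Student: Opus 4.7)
Fix $w \in E^+ \setminus \{0\}$ and suppose $u \in E_w$ is a critical point of $\Phi|_{E_w}$, so that $\Phi'(u)$ vanishes on $\R u \oplus E^0 \oplus E^-$, equivalently on $\R u^+ \oplus E^0 \oplus E^-$. Writing $u = tw + u^0 + u^-$ with $t>0$, every point $\tilde u \in E_w$ may be written as $\tilde u = (1+s)u + v$ with $s\geq -1$ and $v = v^0+v^- \in E^0 \oplus E^-$, by setting $1+s := \tilde t/t$ where $\tilde t w = \tilde u^+$ and then $v := \tilde u - (1+s)u$. I will show $\Phi(\tilde u) \leq \Phi(u)$ via the hypothesis \eqref{eq:criterionB2I}, with strict inequality whenever $\tilde u \neq u$; this simultaneously yields the uniqueness of the critical point of $\Phi|_{E_w}$ and its character as a global maximum.

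The key observation is that $s(s/2+1)u + (1+s)v$ lies in $\R u \oplus E^0 \oplus E^-$, so the critical point condition gives
\begin{equation*}
  I'(u)[s(s/2+1)u + (1+s)v] \;=\; s(s/2+1)(\|u^+\|^2 - \|u^-\|^2) - (1+s)\langle u^-, v^-\rangle,
\end{equation*}
where the right-hand side is just the derivative of the quadratic part of $\Phi$ applied to that direction. Substituting this identity into \eqref{eq:criterionB2I} yields the sharp bound
\begin{equation*}
  I((1+s)u+v) - I(u) \;\geq\; s(s/2+1)(\|u^+\|^2 - \|u^-\|^2) - (1+s)\langle u^-, v^-\rangle.
\end{equation*}

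A direct expansion of the quadratic part of $\Phi$, using $(1+s)^2 - 1 = 2s(s/2+1)$ and $\|(1+s)u^-+v^-\|^2 = (1+s)^2\|u^-\|^2 + 2(1+s)\langle u^-,v^-\rangle + \|v^-\|^2$, gives
\begin{align*}
  \Phi(\tilde u) - \Phi(u) &= s(s/2+1)\|u^+\|^2 - s(s/2+1)\|u^-\|^2 - (1+s)\langle u^-,v^-\rangle - \tfrac{1}{2}\|v^-\|^2 \\
  &\quad\;+ I(u) - I((1+s)u+v).
\end{align*}
Combining the last two displays shows $\Phi(\tilde u) - \Phi(u) \leq -\tfrac{1}{2}\|v^-\|^2 \leq 0$, so $u$ maximizes $\Phi$ on $E_w$. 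For uniqueness (and the strictness claim), if $\tilde u \neq u$ then either $v^- \neq 0$, in which case the last display already gives strict inequality, or $v \in E^0$ with $su + v = \tilde u - u \neq 0$, in which case the strict version of \eqref{eq:criterionB2I} supplied by the hypothesis forces $\Phi(\tilde u) < \Phi(u)$. In particular, any two critical points of $\Phi|_{E_w}$ coincide, so $(S1)$ holds.

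The only subtle step is the first one, namely recognizing that the specific combination $s(s/2+1)u + (1+s)v$ in \eqref{eq:criterionB2I} is tailored so that the Euler-Lagrange identity at $u$ reproduces exactly the quadratic terms appearing in $\Phi(\tilde u)-\Phi(u)$; once this bookkeeping is in place the remainder is routine. The only genuine obstacle is ensuring that the direction $s(s/2+1)u+(1+s)v$ is admissible for the critical-point condition, which is immediate since $u \in E_w \subset \R u^+ \oplus E^0 \oplus E^-$ and $v \in E^0 \oplus E^-$.
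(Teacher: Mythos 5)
Your algebraic identity
\[
\Phi(\tilde u)-\Phi(u)-\Phi'(u)\bigl[s(s/2+1)u+(1+s)v\bigr]=-\frac{1}{2}\|v^-\|^2+\Bigl(I(u)-I(\tilde u)+I'(u)\bigl[s(s/2+1)u+(1+s)v\bigr]\Bigr)
\]
is precisely the one the paper uses (following \cite[Proposition 39]{SzuWet}), and your strictness discussion via the two cases $v^-\neq 0$ versus $v\in E^0$ with $su+v\neq 0$ also matches. So the core calculation is fine.

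However, there is a genuine gap. The hypothesis \eqref{eq:criterionB2I} is only assumed to hold \emph{for $u\in E$ with $I(u)>0$}, and you never verify that a critical point $u$ of $\Phi|_{E_w}$ satisfies $I(u)>0$. Without that, you cannot invoke \eqref{eq:criterionB2I} at all. The paper supplies this step by appealing to Proposition~\ref{prop:IPositivity}: it restricts to $w\in M^+\subset\cM^+$ (so $u^+\in\cM^+$), and from $\Phi'(u)[u]=0$ together with the monotonicity of $t\mapsto I(tu)/t^2$ in (I0) -- which yields $I'(u)[u]\geq 2I(u)\geq 0$ -- one gets $\|u^+\|^2-\|u^-\|^2=I'(u)[u]\geq 0$, i.e.\ $\|u^-\|\leq\|u^+\|$; Proposition~\ref{prop:IPositivity} then gives $I(u)>0$. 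This is not a cosmetic omission: the remark at the end of Section~\ref{sec:verif-cond-s1} explains that, in the intended application, functions supported where $q$ vanishes satisfy $I(su)=0$ for all $s$, and for these the strict version of \eqref{eq:criterionB2I} is simply false. The positivity of $I$ at the critical point is therefore exactly what makes the argument close, and your proof should establish it before applying the hypothesis.
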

  \begin{proof}
    We mimick the proof of \cite[Proposition 39]{SzuWet}.
    Let $w\in M^+$ and let $u$ denote a critical point of $\Phi$ on $\R^+ w \plus  E^0\plus E^-$.  We know
    from Proposition~\ref{prop:IPositivity} that this implies $I(u)>0$. 
    To prove that there are no other critical points it suffices to show that $u$ is a
    strict global maximizer\footnote{Then any other critical point would have to be another strict global
    maximizer, which is impossible.}. From $\Phi'(u)|_{\R w  \oplus E^0\oplus E^-}=0$ we infer
that $\Phi'(u)[s(s/2+1)u+(1+s)v]=0$ for all $s\geq -1$ and all $v\in E^0\oplus E^-$ and therefore 
  \begin{align*}
    \Phi((1+s)u+v)-\Phi(u)
    &= \Phi((1+s)u+v)-\Phi(u) -  \Phi'(u)[s(s/2+1)u+(1+s)v] \\
    &= - \frac{1}{2}\|v^-\|^2 + \Big[\,I(u)-I((1+s)u+v) +  I'(u)[s(s/2+1)u+(1+s)v]\,\Big]. 
  \end{align*}
   By  assumption both terms on the right hand side are nonpositive, which yields $\Phi((1+s)u+v)\leq\Phi(u)$
   for all ${s\geq -1},v\in E^0\oplus E^-$.   
   Moreover, if $\Phi((1+s)u+v)=\Phi(u)$ holds, then we necessarily have $\|v^-\|=0$ and thus $v\in E^0$. So 
   our assumption about the strict inequality implies $su+v=0$, hence $(1+s)u+v=u$. So $u$ is the unique
   global maximizer as required in (S1).
 \end{proof}


\section{Existence of ground state solutions to generalized nonlinear wave equations}
\label{sec:exist-ground-state}

In this section we shall prove our main abstract result on the existence of  ground state solutions to
generalized nonlinear wave equations as stated in Theorem~\ref{thm-general-hyperbolic-intro}. In fact, we   
consider the more general version
\begin{equation}
  \label{eq:nonlinear-wave-section}
\cA u + \partial_{tt}u = q(x,t)f(u),\qquad x \in M,\; t\in \mathbb{S}^1
\end{equation}
of (\ref{eq:nonlinear-wave-intro}) with a more general nonlinearity $f(u)$ in place of $|u|^{p-2}u$. Here, as
in the introduction, we let $(M,d,\mu)$ be a metric measure space and consider a self-adjoint operator $\cA:
D(\cA) \subset L^2(M) \to L^2(M)$ satisfying condition $(A)$. Moreover, we assume that $q \in L^\infty(M
\times \mathbb{S}^1)$.\footnote{We recall here that the spaces $L^p(M\times \mathbb{S}^1)$, $1 \le p \le \infty$ are defined
with respect to the product measure of $\mu$ with the one-dimensional Hausdorff measure on $\mathbb{S}^1$.}

As before, we let $\nu_0(\cA) \le \nu_1(\cA) \le \dots \le \nu_k(\cA) \le \dots \to +\infty$ be the sequence
of   eigenvalues of $\cA$ (counted with multiplicity), and we fix an associated orthonormal basis of $L^2(M)$
consisting of eigenfunctions $\zeta_k\in D(\cA)$, $k \in \N_0$. Moreover, we define
\begin{equation}
  \label{eq:def-e-l}
e_0(t) :={1}\qquad\; \text{and}\qquad\;  e_l(t):=\cos(lt),\quad e_{-l}(t):=\sin(lt)\qquad \text{for
$l\in\N$,}
\end{equation}
so $\{e_l:l\in\Z\}$ is an orthonormal basis of $L^2(\mathbb{S}^1)$. Here we identify functions on $\mathbb{S}^1$ with
$2\pi$-periodic functions on $\R$. Consequently, the functions 
$$
\zeta_k \otimes e_l \in L^2(M \times \mathbb{S}^1), \qquad [\zeta_k \otimes e_l](x,t) = \zeta_k(x)e_l(t),
\qquad k \in \N_0, l \in \Z 
$$
form an orthonormal basis of
$$
H:= L^2(M \times \mathbb{S}^1).
$$
The following basic result seems to be essentially known, but we could not find it in this particular form in the literature. 
\begin{prop}
  \label{prop-self-adjoint-realization}
  The generalized wave operator $L_{\cA}:=\cA+\partial_{tt}$, defined by
  $$
    D(L_{\cA}) := \Big\{v = \sum_{k\in\N_0,l\in\Z} a_{kl}\zeta_k \otimes e_l \in H \::\:  
  \sum_{k\in\N_0,l\in\Z} |a_{kl}|^2 (\nu_k-l^2)^2<\infty\Big\}.
$$
and
$$
L_{\cA}v = \sum_{k\in\N_0,l\in\Z} (\nu_k-l^2) a_{kl}\zeta_k \otimes e_l \qquad \text{for}\quad  v= \sum_{k\in\N_0,l\in\Z} a_{kl}\zeta_k \otimes e_l \quad \in \; D(L_{\cA})
$$
  is selfadjoint in $H$ with eigenpairs $(\nu_k-l^2,\zeta_k \otimes e_l)$ for $k\in\N_0,l\in\Z$.
\end{prop}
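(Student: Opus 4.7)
My approach is to realise $L_{\cA}$ as a diagonal operator with respect to an orthonormal basis of $H$ and invoke the spectral theorem. The argument is essentially an application of the tensor-product structure of $H = L^2(M \times \bS^1)$ combined with the eigenbasis provided by $(A)$.

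First I would verify that $\{\zeta_k \otimes e_l\}_{k \in \N_0,\, l \in \Z}$ is an orthonormal basis of $H$. Orthonormality follows directly from the product-measure structure, the orthonormality of $\{\zeta_k\}$ in $L^2(M)$ (guaranteed by the compact resolvent condition in $(A)$ together with self-adjointness), and the standard orthonormality of $\{e_l\}$ in $L^2(\bS^1)$. Completeness follows from a Fubini/density argument: the finite linear span of the products $\zeta_k \otimes e_l$ approximates any product function $f(x)g(t)$, and such products have dense span in $H$.

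Next I would introduce the unitary transform
$$
U: H \to \ell^2(\N_0 \times \Z), \qquad (Uv)_{kl} := \langle v, \zeta_k \otimes e_l \rangle_H,
$$
and observe that, under $U$, the operator $L_{\cA}$ defined in the statement corresponds precisely to the multiplication operator $M_\lambda$ on $\ell^2(\N_0 \times \Z)$ given by $(M_\lambda a)_{kl} = (\nu_k - l^2)\, a_{kl}$ on the maximal domain $\{a : \sum_{k,l}|a_{kl}|^2(\nu_k - l^2)^2 < \infty\}$. A multiplication operator with real-valued symbol on its maximal domain is selfadjoint: symmetry is immediate, and the equality $D(M_\lambda^*) = D(M_\lambda)$ is a standard short exercise using truncations. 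Since $U$ is unitary, selfadjointness transfers back to $L_{\cA}$. The eigenpair assertion $L_{\cA}(\zeta_k \otimes e_l) = (\nu_k - l^2)\,\zeta_k \otimes e_l$ is then tautological by construction.

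I would close with a brief remark justifying the name \emph{generalized wave operator} $\cA + \partial_{tt}$: the finite linear span of $\{\zeta_k \otimes e_l\}$ is a core for $L_{\cA}$, and on this core the action coincides with the classical sum $\cA v + \partial_{tt} v$, since $\cA \zeta_k = \nu_k \zeta_k$ and $\partial_{tt} e_l = -l^2 e_l$. I do not anticipate any substantive obstacle — the proof is bookkeeping on top of the spectral theorem — the only mild subtlety being to check that, under $U$, the domain stated in the proposition coincides with the natural maximal domain of the multiplication operator.
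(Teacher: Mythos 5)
Your proposal is correct and follows essentially the same route as the paper: both proofs recognize $L_{\cA}$ as a diagonal operator with respect to the orthonormal basis $\{\zeta_k\otimes e_l\}$ and reduce to the self-adjointness of a maximal multiplication operator with real symbol. The only difference is one of packaging — you conjugate by a unitary and cite the standard result, whereas the paper reproves that result directly by testing $v\in D(L_{\cA}^*)$ against truncated sums to show $\sum_{k,l}|a_{kl}|^2(\nu_k-l^2)^2<\infty$ — so the mathematical content is the same.
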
 
\begin{proof}
Clearly, with the definitions above, the operator $L_{\cA}: D(L_{\cA}) \subset H \to H$ is symmetric, so we have $D(L_{\cA}) \subset D(L_{\cA}^*)$. To show its self-adjointness, let $v\in D(L_{\cA}^*) \subset H$. Then there is $C>0$ with 
\begin{equation}
  \label{eq:C-self-adjoint}
\skp{L_{\cA}u}{v} \le    C \|u\|_H\qquad \text{for all $u\in D(L_{\cA})$.}
\end{equation}
We can write $v$ as a convergent series  
$$
v = \sum_{k\in\N_0,l\in\Z} a_{kl}\zeta_k \otimes e_l \qquad \text{in $H$.}
$$
For fixed $M \in \N$, we then consider the function
$$
u = \sum_{\stackrel{k\in\N_0,l\in\Z}{k + |l| \le M}} a_{kl} \zeta_k \otimes e_l \qquad \in \;
  D(L_{\cA})
$$
An application of (\ref{eq:C-self-adjoint}) gives 
  $$
\sum_{\stackrel{k\in\N_0,l\in\Z}{k + |l| \le M}} (\lambda_k-l^2)^2  |a_{kl}|^2 =\skp{L_{\cA}u}{v} \le C \|u\|_H \le C \|v\|_H
  $$
  This inequality holds for all $M\in\N$, we conclude that 
$$
\sum_{k\in\N_0,l\in\Z} (\lambda_k-l^2)^2 |a_{kl}|^2 \le C  \|v\|_H < \infty,
$$
so $v \in D(L_{\cA})$. This proves $D(L_{\cA}^*)\subset D(L_{\cA})$, so $L_{\cA}$ is self-adjoint. 
\end{proof}

Based on Proposition~\ref{prop-self-adjoint-realization}, we may define
the spectral projections $P^\pm, P^0 \in \cL(H)$, the Banach space $(E,\|\cdot\|)$, the subspaces $E^\pm, E^0
\subset H$, the scalar products $\langle \cdot,\cdot \rangle_\pm$ and norms $\|\cdot\|_\pm$ on $E^\pm$ as in
the introduction. In the following, we write again $u^\pm$ in place of $P^\pm u$ for $u \in E$ and $u^0$ in
place of $P^0 u$.   

We then fix $p>2$, and we assume conditions $(CE)_p$ and $(CC)_q$ from the introduction. Moreover, we consider a nonlinearity $f \in C(\R)$ in (\ref{eq:nonlinear-wave-section}) satisfying the following assumption:
\begin{itemize}
 \item[$(MC)_p$] ({\em $p$-monotonicity condition})
   \begin{itemize}
   \item[$(f_1)$] The function $\frac{f(r)}{|r|}$ is strictly increasing on $\R$ with $\lim \limits_{r \to 0} \frac{f(r)}{|r|} = 0$.
   \item[$(f_2)$] There exists constants $c,C>0$ with 
        $$
         \int_0^s f(\tau)d\tau \geq c |s|^{p}\quad \text{and}\quad |f(s)|\leq C(1+|s|^{p-1}) \qquad \text{for all $s \in \R$.}
        $$
   \end{itemize}
\end{itemize}
By definition, a function $u \in E$ is a weak solution of (\ref{eq:nonlinear-wave-section}) if it satisfies
$$
\langle u^+,v^+ \rangle_+ -\langle u^-,v^- \rangle_- = \int_{M \times \mathbb{S}^1}q(x,t)f(u(x,t))v(x,t)\,d(x,t) \qquad 
\text{for all }v \in E.  
$$

The main result of this section is the following.
\begin{thm}
\label{thm-general-hyperbolic-section}
  Let $\cA$ be a selfadjoint operator in $L^2(M)$ satisfying the assumption (A).
  Suppose that $p>2$, $f \in C(\R)$ and $q \in L^\infty(M \times \mathbb{S}^1)$, $q \ge 0$ are chosen with the
  properties that $q>0$ on some open subset of $M \times \mathbb{S}^1$ and that conditions $(CE)_p$, $(CC)_q$ and $(MC)_p$ hold.
  Then (\ref{eq:nonlinear-wave-intro}) admits a ground state solution of (\ref{eq:nonlinear-wave-section}) on $M
  \times \mathbb{S}^1$.
\end{thm}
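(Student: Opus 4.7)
The plan is to apply the abstract existence result in Theorem~\ref{main-abstract-result} to the functional
\[
\Phi(u) = \frac{1}{2}\|u^+\|_+^2 - \frac{1}{2}\|u^-\|_-^2 - I(u), \qquad I(u):=\int_{M\times\bS^1} q(x,t)\, F(u)\,d(x,t),
\]
with $F(s):=\int_0^s f(\tau)\,d\tau$. Thanks to~(\ref{eq:SKP_vs_operator}), nontrivial critical points of $\Phi$ on $E$ are exactly nontrivial weak solutions of~(\ref{eq:nonlinear-wave-section}), so a ground state produced by Theorem~\ref{main-abstract-result} is the ground state solution required. The whole task therefore reduces to checking hypotheses (E1),(E2), (I0)--(I3), (S1),(S2) in our concrete setting.

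Conditions (E1) and (E2) are immediate from the Hilbert space structure of $(E^\pm,\|\cdot\|_\pm)$ and the definition of $\|\cdot\|$. For (I0), the facts $I\geq 0$, $I(0)=I'(0)=0$ follow from $q,F\geq 0$ and $f(0)=0$ (the latter from $(f_1)$), while $I(u)>0$ for $u\in E^0\setminus\{0\}$ uses the lower bound $F(s)\geq c|s|^p$ in $(f_2)$, giving $I(u)\geq c\|u\|_0^p>0$. The monotonicity of $t\mapsto I(tu)/t^2$ and its limits at $0$ and $+\infty$ (under $u^+\neq 0$ and $I(u)\neq 0$) follow from the strict monotonicity of $f(r)/|r|$ in $(f_1)$ together with $F(s)\geq c|s|^p$ and $p>2$. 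Condition (I1) reduces to the pointwise inequality $F(a+b)\leq\delta + C_\delta(F(a)+F(b))$, which is a direct consequence of the two-sided bound $c|s|^p\leq F(s)\leq C(1+|s|^p)$ and $|a+b|^p\leq 2^{p-1}(|a|^p+|b|^p)$ after integrating against $q$. Condition (I2) follows from the compact embedding $(CE)_p$ and dominated convergence, using $|F(s)|\leq C(1+|s|^p)$ and $q\in L^\infty$.

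For (I3), the weak lower semicontinuity of $I$ follows from convexity and continuity of $I$ on $E$ (using convexity of $F$ implied by $(f_1)$), so that $u_n\wto u$ in $E$ yields~(\ref{weak-lower-semicont-eq}). The strict statement requires a Brezis--Lieb argument: from $(CE)_p$ we obtain $u_n^\pm\to u^\pm$ in $L^p$ and, after extraction, almost everywhere; the boundedness of $u_n^0$ in $L^p(q\,d(x,t))$ (inherited from boundedness of $I(u_n)$) lets us further extract a subsequence converging a.e., after which the Brezis--Lieb decomposition applied to $\int qF(\cdot)\,d(x,t)$ combined with $I(u_n)\to I(u)$ forces $\int q|u_n-u|^p\to 0$. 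Since $u_n^\pm\to u^\pm$ already in $L^p$, this delivers $\|u_n^0-u^0\|_0\to 0$. Assumption (S1) is then obtained via Proposition~\ref{prop:criterionB2I}: the required pointwise Szulkin--Weth inequality for $F$ is the classical consequence of the strict monotonicity of $f(r)/|r|$.

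The main obstacle is (S2), which I plan to handle via Lemma~\ref{lem:criterionB3}. Since $F(0)=0$ and $F\geq c|\cdot|^p$, the equation $I(w+v)=0$ is equivalent to $w+v\equiv 0$ on $\{q>0\}$, which by assumption contains a nonempty open set $U_0\subset M\times\bS^1$. The strategy is to select $w\in E^+$ as an eigenfunction of $L_{\cA}$ associated with a positive eigenvalue $\mu_+>0$, chosen so that $w$ does not vanish on any open subset of $U_0$. The locality of $\cA$ encoded in $(A3)$, together with the trivial locality of $\partial_{tt}$, implies that $\phi\equiv 0$ on an open set forces $L_{\cA}\phi\equiv 0$ there; applied to $w+v$ on $U_0$ and using $L_{\cA}v^0=0$, this yields $L_{\cA}v^-=-\mu_+ w$ on $U_0$. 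Combining this identity with $(CC)_q$ to control $\|v^0\|_{L^2}$ via $\int q|v^0|^2=\int_{\{q>0\}} q|w+v^-|^2$ and with the spectral gap of $L_{\cA}$ away from $0$ (a consequence of $(CE)_p$) to link $\|v^-\|_{L^2}$ with $\|v^-\|_-$ yields a quantitative lower bound on $\|v^-\|_-$. A careful tuning of $w$ so that the geometric ratio $\|w\|_{L^2(U_0)}/\|w\|_{L^2}$ dominates the constants appearing in this bound then delivers the strict inequality $\|v^-\|_->\|w\|_+$ demanded by Lemma~\ref{lem:criterionB3}; this quantitative tuning is the technical heart of the argument. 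With (S1) and (S2) verified, Theorem~\ref{main-abstract-result} supplies a ground state solution of $\Phi'(u)=0$, completing the proof.
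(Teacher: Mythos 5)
Your overall strategy, and the verifications of (E1), (E2), (I0), (I1), (I2), and (S1), are correct and track the paper's proof. However, there are two genuine gaps in the verifications of (I3) and (S2).

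\medskip
\noindent\textbf{Gap in (I3).} You claim that boundedness of $(u_n^0)$ in $L^p(q\,d(x,t))$ allows extraction of an a.e.\ convergent subsequence, and then feed this into a Brezis--Lieb decomposition. But $L^p$-boundedness alone does not yield an a.e.\ convergent subsequence (consider $\sin(nx)$ on $[0,2\pi]$). The Brezis--Lieb lemma \emph{requires} a.e.\ convergence as a hypothesis, and on the $E^0$-component there is no compact embedding to supply it — so the argument is circular exactly where it matters. The paper resolves this by exploiting strict convexity rather than Brezis--Lieb: set $g_n:=F(u_n)-F(u)-f(u)(u_n-u)\ge 0$; the assumptions force $\int q\,g_n\,d(x,t)\to 0$, hence $g_n\to 0$ in $L^1(q\,d(x,t))$. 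Riesz--Fischer then gives a subsequence with $g_n\to 0$ a.e.\ \emph{and} a dominating $L^1$-majorant, and strict convexity of $F$ upgrades $g_n\to 0$ a.e.\ to $u_n\to u$ a.e. Dominated convergence (using $(f_2)$) then yields $\int q|u_n-u|^p\to 0$ and hence $\|u_n^0-u^0\|_0\to 0$.

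\medskip
\noindent\textbf{Gap in (S2).} You correctly target Lemma~\ref{lem:criterionB3}, but the plan of choosing $w$ as an eigenfunction of $L_\cA$ with positive eigenvalue that ``does not vanish on any open subset of $U_0$'' runs into several obstructions. First, assumptions (A2), (A3) do not imply any unique continuation principle in the general metric measure space setting, so the existence of such an eigenfunction is not guaranteed. Second, for generic $v^-\in E^-$ one has $v^-\notin D(L_\cA)$ (finiteness of $\|v^-\|_-$ only controls $\int|\lambda|\,d\langle E(\lambda)v^-,v^-\rangle$, not $\int \lambda^2$), so the formal equation $L_\cA v^-=-\mu_+ w$ on $U_0$ is not a valid $L^2$-identity. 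Third, the ``quantitative tuning'' of $w$ is not carried out and it is unclear how it would produce the required strict inequality $\|v^-\|_->\|w\|_+$. The paper's argument (Propositions~\ref{existence-weighted-eigenvalue} and \ref{Mplus-criterion}) sidesteps all of this by localizing the \emph{test function} $\phi$, not $w$. Using (A2), one constructs $\phi\in D(L_\cA)$ compactly supported in the open set $U$ where $q>0$, with $\langle L_\cA\phi,\phi\rangle_H>0$, hence $\|\phi^+\|>\|\phi^-\|$. Setting $w:=\phi^+$, suppose $I(w+v)=0$, so $w+v\equiv 0$ a.e.\ on $U$. By (A3) and locality of $\partial_{tt}$, $L_\cA\phi$ is supported in $U$, whence
\[
0=\langle L_\cA\phi,\,w+v\rangle_H=\langle \phi^+,(w+v)^+\rangle_+-\langle\phi^-,(w+v)^-\rangle_-
=\|\phi^+\|^2-\langle\phi^-,v^-\rangle_-.
\]
Cauchy--Schwarz and $\|\phi^-\|<\|\phi^+\|$ then give $\|v^-\|>\|\phi^+\|=\|w\|$, and Lemma~\ref{lem:criterionB3} applies. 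Placing the compact support and locality on $\phi$ (rather than trying to localize $w$) is the key structural move your outline misses, and it eliminates the need for unique continuation, the formal manipulation of $L_\cA v^-$, and any quantitative constant-chasing.
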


The remainder of this section is devoted to the proof of this theorem. For this we shall apply
Theorem~\ref{main-abstract-result} to the functional
\begin{equation*}
\Phi \in C^1(E), \qquad \Phi(u)=\frac{1}{2}\|u^+\|^2-\frac{1}{2}\|u^-\|^2-I(u) 
=\frac{1}{2}\|u^+\|_+^2-\frac{1}{2}\|u^-\|_-^2-I(u),
\end{equation*}
where
$$
I(u) = \int_{M \times \mathbb{S}^1}q(x,t)F(u(x,t))\,d(x,t)\qquad \text{with}\quad F(r)= \int_0^r f(\rho)\,d\rho.
$$
Using assumption $(CE)_p$ and the definition of the norm $\|\cdot\|$ on $E$, it is straightforward to
check that $I$ is   well-defined and of class $C^1$ on $E$ with 
$$
I'(u)v=  \int_{M \times \mathbb{S}^1}q(x,t)f(u(x,t))v(x,t)\,d(x,t) \qquad \text{for all }v \in E.  
$$ 
Consequently, critical points of $\Phi$ are precisely the weak solutions of
(\ref{eq:nonlinear-wave-section}).

In order to prove Theorem~\ref{thm-general-hyperbolic-section} by means of 
Theorem~\ref{main-abstract-result}, we only have to show that conditions $(MC)_p$, $(CE)_p$ and $(CC)_q$ imply
the abstract assumptions $(I0)-(I3)$ and $(S1)$, $(S2)$ from Section~\ref{sec:an-abstr-exist}. Note   that the
conditions $(E1)$, $(E2)$ from Section~\ref{sec:an-abstr-exist} already hold by the construction of the Banach
space $E$ explained in the Introduction.
In the following subsections, we verify assumptions $(I0)-(I3)$ and $(S1)$, $(S2)$ successively.

\subsection{Verification of condition $(I0)$.}

Since $q\geq 0$ and $F\geq 0$ we have $I\geq 0$. Moreover, since $F \in C^1(\R)$ with
$|F'(s)|= |f(s)|\leq C(1+|s|^{p-1})$ for all $s \in \R$ and $q \in L^\infty(M \times \mathbb{S}^1)$, a standard
argument shows that $I\in C^1(E)$. Next we deduce from  $(MC)_p$ the inequality   
$$
  F(r) = \int_0^r \frac{f(\rho)}{\rho} \rho \,d\rho \le \frac{f(r)}{r}\int_{0}^r \rho \,d\rho =
  \frac{f(r)r}{2} \qquad \text{for }r \in \R,
  $$
which implies that 
$$
\frac{d}{dr} \frac{F(r)}{r^2} = \frac{f(r)r -2F(r)}{r^3} 
$$
is nonnegative for $r>0$ and nonpositive for $r<0$. This ensures that, for every $u \in E$, the
function $r \mapsto \frac{I(ru)}{r^2}$ is nondecreasing on $(0,\infty)$. Next we note that $(MC)_p$
implies that for every $\eps>0$ there exists $C_\eps>0$ with
\begin{equation}
  \label{eq:F-eps-est}
  0\leq F(r) \le \eps |r|^2 + C_\eps |r|^{p} \qquad \text{for all $r \in \R$}.
\end{equation}
From this it is easy to deduce that 
$$
\lim_{r \to 0} \frac{I(ru)}{r^2}= 0 \qquad \text{for every }u \in E,
$$
and this implies, in particular, that $I(0)=I'(0)=0$.

To verify $\lim \limits_{r \to + \infty}\frac{I(ru)}{r^2}= +\infty$ if $I(u)>0$ let us assume the latter.
For sufficiently small $\eps>0$ depending on $u$ we then have 
$$
  I(u)-\eps \int_{M \times \mathbb{S}^1}q(x,t) |u|^2\,d(x,t)> 0.
$$
Then, by $(f_2)$ in $(MC)_p$  and \eqref{eq:F-eps-est},  
\begin{align*}
  \frac{I(ru)}{r^2}
  &=  \int_{M \times \mathbb{S}^1}q(x,t) \frac{F(ru)}{r^{2}}\,d(x,t) \ge  c r^{p-2} \int_{M \times \mathbb{S}^1}q(x,t)|u|^{p}\,d(x,t) \\
  &\ge  \frac{c r^{p-2}}{C_\eps} \int_{M \times \mathbb{S}^1}q(x,t) \big(F(u)-\eps |u|^2\big)\,d(x,t)= \frac{c r^{p-2}}{C_\eps} \Big(I(u)-\eps \int_{M \times \mathbb{S}^1}q(x,t) |u|^2\,d(x,t)\Big).  
\end{align*}
So the positivity of the last term gives the claim, and thus $(I0)$ is satisfied.

\subsection{Verification of condition $(I1)$.}

We need the following lemma.

\begin{lem}
\label{prelim-lemma-B5}  
For every $\delta>0$ there exists a constant $K_\delta>0$ with
$$
F(r+s) \le \delta + K_\delta\bigl(F(r)+F(s)\bigr) \qquad \text{for all }r,s \in \R.
$$
\end{lem}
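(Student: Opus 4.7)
My plan is to exploit the two-sided polynomial control on $F$ coming from condition $(MC)_p$ and to split the argument according to the size of $|r+s|$. First, I would record that $f_1$ forces $f$ to have the same sign as its argument (since $f(r)/|r|$ is strictly increasing through $0$), so $F \ge 0$ everywhere. The growth bound in $(f_2)$ then integrates to an upper estimate of the form
\begin{equation*}
F(r) \le C'(|r|+|r|^{p}) \qquad \text{for all } r \in \R,
\end{equation*}
while the coercivity part of $(f_2)$ gives the matching lower estimate $F(r)\ge c|r|^{p}$.

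Next, given $\delta>0$, I would choose a threshold $R=R(\delta)\in (0,1]$ small enough that $C'(R+R^{p})\le \delta$. On the set $\{|r+s|\le R\}$ the upper bound above immediately yields $F(r+s)\le\delta$, which handles this case without touching $F(r)+F(s)$. On the complementary set $\{|r+s|>R\}$, I would use $|r+s|\le R^{1-p}|r+s|^{p}$ (valid because $R\le 1\le |r+s|/R^{?}$…; more concretely, $|r+s|^{p}=|r+s|\cdot|r+s|^{p-1}\ge R^{p-1}|r+s|$), giving
\begin{equation*}
F(r+s) \le C'\bigl(1+R^{1-p}\bigr)|r+s|^{p}.
\end{equation*}

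Then the elementary inequality $|r+s|^{p}\le 2^{p-1}(|r|^{p}+|s|^{p})$ combined with $|r|^{p}+|s|^{p}\le c^{-1}(F(r)+F(s))$ yields
\begin{equation*}
F(r+s) \le \frac{2^{p-1}C'(1+R^{1-p})}{c}\bigl(F(r)+F(s)\bigr)
\end{equation*}
on this second set. Setting $K_\delta:=2^{p-1}C'(1+R^{1-p})/c$ and combining both cases gives the desired bound for all $r,s\in\R$. No step should present a real obstacle; the only subtle point is to split on $|r+s|$ rather than on $|r|$ and $|s|$ individually, since only this split allows the additive error $\delta$ to absorb the linear term $C'|r+s|$ in the upper bound of $F(r+s)$ without forcing the multiplicative constant to blow up.
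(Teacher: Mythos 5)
Your proof is correct and follows essentially the same approach as the paper: both split on whether $|r+s|$ lies below a threshold, handle the small case via continuity of $F$ at $0$, and handle the large case by combining the two-sided polynomial controls coming from $(f_2)$. The only difference is cosmetic: the paper passes through the monotonicity of $F$ and $R:=\max\{r,s\}$ to get $F(r+s)\le F(2R)\lesssim F(R)\le F(r)+F(s)$, where you instead invoke the power-mean inequality $|r+s|^p\le 2^{p-1}(|r|^p+|s|^p)$ together with the lower bound $F(t)\ge c|t|^p$.
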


\begin{proof}
  Let $\delta>0$ be arbitrary and choose $\eps=\eps(\delta)$ such that $|r+s|\leq \eps$ implies $F(r+s)\leq
  \delta$. So the desired estimate holds if $|r+s|\leq \eps$. In the 
  case $r+s\geq \eps$, set $R:=\max\{r,s\}$. From $r+s\geq \eps$ we deduce $R\geq \eps/2$ as well as
  $F(r+s)\leq F(2R)$ given that $F$ is nondecreasing on $(0,\infty)$. This implies, for some suitable positive
  $C_\eps'$ independent of $r$ and $s$, 
  $$
    F(r+s)
    \leq F(2R)
    \stackrel{\eqref{eq:F-eps-est}}\leq \eps (2R)^2 + C_\eps|2R|^p 
    \leq (\eps^{3-p}+C_\eps) |2R|^p
    \stackrel{(f_2)}\leq C_\eps' F(R)
    \leq \delta+ C_\eps'(F(r)+F(s)).
  $$
  The proves the desired estimate with $K_\delta:=C_\eps'$. The remaining case $r+s\leq -\eps$ is treated
  analogously.
\end{proof}

We now complete the verification of condition $(I1)$. Let $\eps>0$ be given. We recall that $q \in L^\infty(M
\times \mathbb{S}^1) \subset L^1(M \times \mathbb{S}^1)$ since $M$ has finite measure by assumption. Hence we
may apply Lemma~\ref{prelim-lemma-B5} with $\delta =  \eps \|q\|_{L^1(M \times \mathbb{S}^1)}^{-1}$. This
gives the estimate
\begin{align*}
  I(\phi+ \psi)
  &= \int_{M \times \mathbb{S}^1} q(x,t) F(\phi+\psi) \,d(x,t) \\
  &\leq  \int_{M \times \mathbb{S}^1} q(x,t)\Bigl( \delta + K_\delta \bigl(F(\phi)+ F(\psi)\bigr)\Bigr)\,d(x,t)\\
  &\leq  \delta \|q\|_{L^1(M \times \mathbb{S}^1)} + K_\delta
    \int_{M \times \mathbb{S}^1} q(x,t)\bigl(F(\phi)+ F(\psi)\bigr)\,d(x,t)\\
  &=\eps + K_\delta
    \int_{M \times \mathbb{S}^1} q(x,t)\bigl(F(\phi)+ F(\psi)\bigr)\,d(x,t)
  =\eps + K_\delta \bigl(I(\phi) + I(\psi)\bigr).
 \end{align*} 
 Hence we get the required inequality in $(I1)$ with $\kappa_\eps := K_\delta$.

\subsection{Verification of condition $(I2)$.} 
\label{sec:verif-cond-i2}
Let $u \in E$ and let $(w_n)_n$ be a sequence in $E^+$ with $w_n \weak w$ weakly in $E^+$ as $n \to \infty$. Then $w_n \to w$
strongly in $L^p(M \times \mathbb{S}^1)$ by $(CE)_p$. {Since $f=F'$ satisfies $(f_1),(f_2)$ we have}
\begin{align*}
  |I(u+w_n)-I(u+w)| 
  &\leq {C\|q\|_\infty \int_{M \times \mathbb{S}^1} |F(u+w_n)-F(u+w)| \,d(x,t)} \\
  &\leq C\|q\|_\infty \int_{M \times \mathbb{S}^1} { |w_n-w| \max\{|f(u+w_n)|,|f(u+w)|\} }\,d(x,t) \\
  &\leq C'\|q\|_\infty \int_{M \times \mathbb{S}^1} |w_n-w| ({1+}|u|^{p-1}+|w_n|^{p-1}+|w|^p) \,d(x,t) \\
  &\leq {C''}\|q\|_\infty \|w_n-w\|_{L^p(M \times \mathbb{S}^1)}
  ({1+}\|u\|_{L^p({M \times \mathbb{S}^1})}^{p-1}+\|w_n\|_{L^p({M \times \mathbb{S}^1})}^{p-1}+\|w\|_{L^p({M
  \times \mathbb{S}^1})}^{p-1})
  \\
  &\leq {C'''} \|w_n-w\|_{L^p({M \times \mathbb{S}^1})} = o(1) \qquad \text{as }n \to \infty.  
\end{align*}
This yields condition $(I2)$.

\subsection{Verification of condition $(I3)$.}
\label{sec:verif-cond-i3}

By assumption $(MC)_p$, the function $f$ is strictly increasing on $\R$ and therefore $F$ is strictly convex,
which implies the convexity of the functional $I$. Since $I$ is continuous, we conclude that $I$ is weakly
lower semicontinuous. Now let $(u_n)_n$ be a sequence in $E$ with $u_n \weak u$ in $E$ and $I(u_n)\to
I(u)$. Then 
$$
  \int_{M\times \mathbb{S}^1}f(u)(u_n-u)\,d\nu(x,t) \to 0 \quad \text{as }n \to \infty, \qquad \text{where }\,d\nu
  := q(x,t)\,d(x,t).
$$
This implies 
\begin{align*}
o(1)
= I(u_n)-I(u) 
&= \int_{M\times \mathbb{S}^1}\bigl(F(u_n)-F(u)\bigr)\,d\nu(x,t)\\ 
&= \int_{M\times \mathbb{S}^1} \bigl(F(u_n)-F(u)-f(u)(u_n-u)\bigr)\,d\nu(x,t) 
  \quad\text{as }n\to\infty.
\end{align*}
The convexity of $F$ implies that the integrand
$$
f_n:= F(u_n)-F(u)-f(u)(u_n-u)
$$
satisfies $f_n\geq 0$ and $f_n\to 0$ in $L^1(M\times \mathbb{S}^1, d\nu)$. By the Riesz-Fischer Theorem, we
can find a subsequence, again denoted by $(f_n)$, such that
the following holds $\nu$-almost everywhere: 
$$
  f_n\to 0 \text{ as }n\to\infty
  \qquad\text{and}\qquad
|f_n|\leq h  \text{ for all }n\in\N 
$$
for some function $h\in L^1(M\times \mathbb{S}^1,d\nu)$. Then the strict convexity of $F$ gives 
$$
u_n\to u \qquad \text{pointwise $\nu$-almost everywhere.}
$$
Moreover, by $(f_2)$ in $(MC)_p$ and Young's Inequality, we have 
\begin{align*}
 c|u_n|^p 
  &\leq  F(u_n) \leq f_n+|F(u)|+|f(u)||u-u_n| \\
  &\leq |h|+|F(u)|+|f(u)||u| + |f(u)||u_n| \\
  &\leq |h|+|F(u)|+|f(u)||u| + \frac{c}{2}|u_n|^p + C|f(u)|^{p'}   
\end{align*}
for some $C>0$ and thus  
$$
   |u_n|
   \leq \left(\frac{2}{c}\big(|h|+|F(u)|+|f(u)||u| + C|f(u)|^{p'}\big)\right)^{1/p}\qquad \nu-\text{a.e. for all } n. 
 $$ 
So the Dominated Convergence Theorem implies that 
\begin{align*}
  \|u_n-u\|_{L^p(M\times \mathbb{S}^1;d \nu)}^p
  = \int_{M\times \mathbb{S}^1} q(x,t)|u_n-u|^p \,d(x,t) 
  \to 0 \quad\text{as }n\to\infty.
\end{align*}
Thus finally implies
\begin{align*}
  \|u_n^0-u^0\| 
  &= \|u_n^0-u^0\|_{L^p(M\times \mathbb{S}^1;d \nu)}   \\ 
  &\leq  \|u_n-u\|_{L^p(M\times \mathbb{S}^1;d \nu)} +
  \|u_n^+-u^+\|_{L^p(M\times \mathbb{S}^1;d \nu)}  
  +\|u_n^--u^-\|_{L^p(M\times \mathbb{S}^1;d \nu)} \\
  &\leq  \|u_n-u\|_{L^p(M\times \mathbb{S}^1;d \nu)} +
  \|q\|_\infty\big( \|u_n^+-u^+\|_{L^p(M\times \mathbb{S}^1)}  
  +\|u_n^--u^-\|_{L^p(M\times \mathbb{S}^1)}\big) \\
  &\to 0 \qquad \text{as }n \to \infty.
\end{align*}
Here we used that $u_n^+\to u^+,u_n^-\to u^-$ in $L^p(M\times \mathbb{S}^1)$, which follows from  $(CE)_p$. So we
conclude that condition $(I3)$ holds.

\subsection{Verification of condition (S1).}  
\label{sec:verif-cond-s1}

We shall apply the abstract criterion given by Proposition~\ref{prop:criterionB2I} to verify condition $(S1)$. 
For this, we first note that condition $(f_1)$ in $(MC)_p$ implies that 
\begin{equation}
  \label{eq:szulkin-weth-lemma}
     F(u)-F((1+s)u+v) + F'(u)[s(s/2+1)u+(1+s)v] \leq 0
     \qquad\;\text{for } u,v\in\R,s\geq -1
\end{equation}
   with strict inequality whenever $(su,v)\neq (0,0)$, see \cite{SzuWet}[Lemma 38, Lemma~21].

 In order to apply Proposition~\ref{prop:criterionB2I}, we now let $u \in E$ with $I(u)>0$. For all $v\in E^0\oplus E^-$ and $s\geq -1$, we then have 
    \begin{align*}
      &I(u)-I((1+s)u+v)+I'(u)[s(s/2+1)u+(1+s)v] \\
      &=  \int_{M \times \mathbb{S}^1} q(x,t)\Big[ F(u)-F((1+s)u+v) + F'(u)[s(s/2+1)u+(1+s)v] \Big]\,d(x,t)
    \end{align*}    
  So $q\geq 0$ and \eqref{eq:szulkin-weth-lemma} give~\eqref{eq:criterionB2I}. Now it remains so show
  that equality in~\eqref{eq:criterionB2I} holds for some $v\in E^0$ only if 
  $su+v= 0$. To prove this let $Q:= \{(x,t) \in M \times \mathbb{S}^1\::\: q(x,t) \not = 0\}$. Then 
  $$
     F(u(x,t))-F((1+s)u(x,t)+v(x,t)) + F'(u(x,t))[s(s/2+1)u(x,t)+(1+s)v(x,t)]=0
  $$ 
  for almost all $(x,t) \in Q$. The strictness property of \eqref{eq:szulkin-weth-lemma} then implies
  $su(x,t)=v(x,t)=0$ for almost all $(x,t)\in Q$, in particular 
  $$
   \int_{M \times \mathbb{S}^1} q(x,t)|v(x,t)|^2\,d(x,t)= 0.
   $$
   As a consequence of condition $(CC)_q$, the latter implies $v\equiv 0$. On the other
   hand, the assumption $I(u)>0$ implies that $u$ does not vanish a.e. on $Q$, so $su(x,t)=0$ for almost all
   $(x,t)\in Q$ implies $s=0$. We thus conclude that $s=0$ and $v=0$, whence $su+v=0$.
   So Proposition~\ref{prop:criterionB2I} applies and (S1) holds.
    \smallskip

 We stress that the extra assumption $I(u)>0$ was necessary in Proposition~\ref{prop:criterionB2I} for the application in the present context. Without this assumption, we cannot deduce (S1) because of functions $u$ that are
  supported in $(M \times \mathbb{S}^1) \sm Q$ with $Q$ defined as above. Such functions satisfy $I(su)=0$ for all
  $s\in\R$, so strict inequality does not hold whenever $v\in E^0$ and $su+v\neq 0$ (choose $v=0$).
 
\subsection{Verification of condition (S2).}

To verify condition (S2), we need the following preliminary observation. 
 \begin{prop}
\label{existence-weighted-eigenvalue}
Let $U \subset M \times \mathbb{S}^1$ be an open subset. Then there exists $\phi \in D(L_{\cA})$ with
$\supp(\phi)\subset U$ and $\langle L_{\cA} \phi,\phi \rangle_H>0$.
 \end{prop}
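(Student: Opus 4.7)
The plan is to use a separation-of-variables ansatz $\phi(x,t) = \psi(x)\eta(t)$ with $\psi$ supported in a small open set $V\subset M$ and $\eta$ a smooth bump supported in a small open arc $I\subset \mathbb{S}^1$, chosen so that $V\times I \subset U$ with $\overline{V}\times \overline{I}\subset U$. Since $U$ is open in the metric space $M\times\mathbb{S}^1$, picking any $(x_0,t_0)\in U$ and choosing $V$, $I$ to be sufficiently small open neighborhoods of $x_0,t_0$ makes such a reduction possible. I then fix once and for all an $\eta\in C^\infty(\mathbb{S}^1)\setminus\{0\}$ with $\supp\eta\subset I$ and record the constant $\Lambda := \|\eta'\|_{L^2(\mathbb{S}^1)}^2/\|\eta\|_{L^2(\mathbb{S}^1)}^2<\infty$.

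Next, by assumption (A2) applied to the open set $V$, I can choose $\psi\in D(\cA)\setminus\{0\}$ with $\psi\equiv 0$ on $M\setminus V$ and $\langle \cA\psi,\psi\rangle/\langle\psi,\psi\rangle > \Lambda$. Setting $\phi := \psi\otimes\eta$, one has $\supp\phi\subset \overline{V}\times\overline{I}\subset U$ as required. To see that $\phi\in D(L_\cA)$, I expand $\psi=\sum_k b_k\zeta_k$ and $\eta=\sum_l c_l e_l$, so that $\phi=\sum_{k,l} b_kc_l\,\zeta_k\otimes e_l$; then $(\nu_k-l^2)^2\leq 2\nu_k^2+2l^4$ yields
\begin{equation*}
  \sum_{k,l} |b_kc_l|^2(\nu_k-l^2)^2 \leq 2\|\cA\psi\|_{L^2(M)}^2\|\eta\|_{L^2(\mathbb{S}^1)}^2 + 2\|\psi\|_{L^2(M)}^2\|\eta''\|_{L^2(\mathbb{S}^1)}^2 <\infty
\end{equation*}
by $\psi\in D(\cA)$ and smoothness of $\eta$, so $\phi\in D(L_\cA)$ by the description of the domain in Proposition~\ref{prop-self-adjoint-realization}.

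Finally, I compute $\langle L_\cA\phi,\phi\rangle_H$ by Fubini. Writing $L_\cA\phi=(\cA\psi)\otimes\eta + \psi\otimes\eta''$ and integrating by parts in $t$ (the boundary terms vanish because $\eta$ is supported inside the arc $I\subset\mathbb{S}^1$, or alternatively by periodicity), I obtain
\begin{equation*}
  \langle L_\cA\phi,\phi\rangle_H = \langle \cA\psi,\psi\rangle_{L^2(M)}\|\eta\|_{L^2(\mathbb{S}^1)}^2 - \|\psi\|_{L^2(M)}^2\|\eta'\|_{L^2(\mathbb{S}^1)}^2,
\end{equation*}
which is strictly positive by the choice of $\psi$ made above.

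The only potentially delicate step is arranging the supports correctly, i.e.\ shrinking $U$ to a product neighborhood $V\times I$ with closure inside $U$; the application of (A2) is then immediate and (A3) plays no direct role in the argument (the multiplicative structure of $\phi$ already gives $\supp\phi\subset V\times I$). Assumption (A1) is not needed here, but the unboundedness from above provided by (A2) is precisely what allows us to absorb the negative contribution coming from $\partial_{tt}$.
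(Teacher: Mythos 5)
Your proof is correct and follows essentially the same route as the paper: separation of variables $\phi = \psi\otimes\eta$ on a product neighborhood $V\times I\subset U$, computation of the quadratic form as $\langle\cA\psi,\psi\rangle\,\|\eta\|_{L^2}^2 - \|\psi\|_{L^2}^2\,\|\eta'\|_{L^2}^2$, and an appeal to (A2) to make the Rayleigh quotient of $\psi$ large enough. Your version is slightly more explicit — you verify $\phi\in D(L_{\cA})$ via the eigenfunction expansion and you take care that $\overline{V}\times\overline{I}\subset U$ so that the support inclusion is genuinely strict — but these are refinements of, not departures from, the argument in the paper.
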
 
\begin{proof}
Let $N \subset M$ be an open subdomain and $I \subset \mathbb{S}^1$ be an open subinterval with $N \times I \subset U$. 
Moreover, let $\alpha \in C^2_c(I)$ be a nontrivial function, and let $w \in D(\cA),\supp(w)\subset N$. 
Then $(x,t) \mapsto \phi(x,t)= \alpha(t)w(x)$ defines a function in $D(L_{\cA})$ with 
$$
  \langle L_{\mathcal A} \phi, \phi \rangle_{H}
  = \langle L_{\mathcal A} \phi, \phi \rangle_{L^2(U)}
  = \langle \alpha'',\alpha \rangle_{L^2(I)} \|w\|_{L^2(N)}^2 + \langle \cA w,w \rangle_{L^2(N)}
\|\alpha\|_{L^2(I)}^2, 
$$
hence $\langle L_\cA \phi, \phi \rangle_{L^2(U)}>0$ if
$$
\frac{\langle \cA w,w \rangle_{L^2(N)}}{\|w\|_{L^2(N)}^2} > - \frac{\langle \alpha'',\alpha \rangle_{L^2(I)}}{\|\alpha\|_{L^2(I)}^2}.
$$
By assumption (A) we can find a function $w\in D(\cA)$ supported in $N$ such that this condition
is satisfied. Hence the claim is proved.
\end{proof}

Condition $(S2)$ is now a consequence of the following proposition.

\begin{prop}
\label{Mplus-criterion}
Assume that  $q(x,t) \geq 0$ with $q>0$ on an open subset $U \subset
M \times \mathbb{S}^1$. Then we have $\phi^+ \in \cM^+$ whenever $\phi \in D(L_{\cA})$ is chosen with
$\supp(\phi)\subset U$ and $\langle L_\cA \phi,\phi \rangle_H>0$.
\end{prop}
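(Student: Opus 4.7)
The plan is to apply Lemma~\ref{lem:criterionB3} with $w := \phi^+$. First note that~\eqref{eq:SKP_vs_operator} applied with $u = v = \phi$ yields $\langle L_\cA \phi,\phi\rangle_H = \|\phi^+\|_+^2 - \|\phi^-\|_-^2$, so the hypothesis $\langle L_\cA \phi,\phi\rangle_H > 0$ forces both $\phi^+\neq 0$ and $\|\phi^-\|_- < \|\phi^+\|_+$. It then remains to show that every $v = v^0 + v^- \in E^0\oplus E^-$ with $I(\phi^+ + v) = 0$ satisfies $\|v^-\|_- > \|\phi^+\|_+$.

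So fix such a $v$. Condition $(f_1)$ in $(MC)_p$ implies that $f$ is strictly increasing with $f(0)=0$, so its primitive $F$ is nonnegative and strictly positive away from the origin. Since $q \ge 0$ with $q > 0$ on $U$, the identity
$$
0 = I(\phi^+ + v) = \int_{M \times \mathbb{S}^1} q(x,t)\, F(\phi^+ + v)\,d(x,t)
$$
forces $\phi^+ + v = 0$ almost everywhere on $U$. On the other hand, using $(A3)$ slicewise in $t$ together with the locality of $\partial_{tt}$, one checks that $L_\cA\phi$ vanishes almost everywhere outside $\overline U$.

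The proof then hinges on computing $\langle L_\cA \phi, \phi^+ + v\rangle_H$ in two ways. Since the product $(L_\cA\phi)(\phi^+ + v)$ vanishes a.e.\ on $U$ (by the first observation) and a.e.\ outside $\overline U$ (by the second), we obtain
$$
\langle L_\cA \phi,\phi^+ + v\rangle_H = \int_{M \times \mathbb{S}^1}(L_\cA \phi)(\phi^+ + v)\,d(x,t) = 0,
$$
while \eqref{eq:SKP_vs_operator} applied to $(\phi,\phi^+ + v)$ yields
$$
\langle L_\cA \phi,\phi^+ + v\rangle_H = \|\phi^+\|_+^2 - \langle \phi^-,v^-\rangle_-.
$$
Comparing gives $\|\phi^+\|_+^2 = \langle \phi^-,v^-\rangle_-$, whence Cauchy--Schwarz combined with $\|\phi^-\|_- < \|\phi^+\|_+$ produces $\|\phi^+\|_+^2 \leq \|\phi^-\|_-\,\|v^-\|_- < \|\phi^+\|_+\,\|v^-\|_-$, so $\|v^-\|_- > \|\phi^+\|_+$ as required. (In the degenerate case $\phi^- = 0$, the identity forces $\phi^+ = 0$, contradicting what was shown above, so no such $v$ exists and the hypothesis of Lemma~\ref{lem:criterionB3} is vacuously satisfied.)

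The main obstacle is the rigorous justification that $L_\cA\phi$ vanishes a.e.\ outside $\overline U$ and that boundary effects on $\partial U$ are negligible for the integral computation; in the general metric-measure-space setting of the paper this requires combining the spatial locality $(A3)$ (applied on each slice $\{x : (x,t)\in U\}$) with the locality of the differential operator $\partial_{tt}$ in $t$, and handling $\partial U$ carefully in the measure-theoretic framework.
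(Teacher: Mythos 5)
Your proof is correct and follows essentially the same route as the paper: apply Lemma~\ref{lem:criterionB3} with $w=\phi^+$, use $\langle L_\cA\phi,\phi\rangle_H>0$ to get $\|\phi^+\|_+>\|\phi^-\|_-$, deduce from $I(\phi^++v)=0$ and $q>0$ on $U$ that $\phi^++v\equiv 0$ on $U$, evaluate $\langle L_\cA\phi,\phi^++v\rangle_H$ two ways, and finish with Cauchy--Schwarz. The paper's own argument is even more terse (it simply cites $(A3)$ and \eqref{eq:SKP_vs_operator} to assert the orthogonality).

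The ``main obstacle'' you flag at the end is in fact not an obstacle, and the fix is to phrase the locality statement slightly more sharply. The relevant fact is not that $L_\cA\phi$ vanishes a.e.\ outside $\overline{U}$, but that $L_\cA\phi$ vanishes a.e.\ on the \emph{open} set $(M\times\mathbb{S}^1)\setminus\supp(\phi)$: on any open set where $\phi\equiv 0$, $(A3)$ gives $\cA\phi\equiv 0$ there (fibrewise in $t$), and $\partial_{tt}$ is a local operator, so $L_\cA\phi\equiv 0$ there as well. Since $\supp(\phi)$ is a closed set contained in the open set $U$, its complement already contains $(M\times\mathbb{S}^1)\setminus U$, and in particular $\partial U$. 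The pointwise product $(L_\cA\phi)(\phi^++v)$ is then zero a.e.\ on all of $M\times\mathbb{S}^1$: on $\supp(\phi)\subset U$ the second factor vanishes a.e., and on $(M\times\mathbb{S}^1)\setminus\supp(\phi)$ the first factor vanishes. No boundary set needs to be handled separately, so the integral computation is rigorous without further qualification.
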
 

 \begin{proof}
   We first note that
\begin{equation}\label{eq:phi_inequality}
  \|\phi^+\|^2 - \|\phi^-\|^2 = \langle L_{\cA}\phi,\phi \rangle_H>0,\quad
  \text{hence }  \|\phi^+\| > \|\phi^-\|\geq 0.
\end{equation}
We now apply the criterion given in Lemma~\ref{lem:criterionB3} to $w:=\phi^+$. So let $v\in E^0\oplus E^-$
with $I(w+v)=0$, which in particular implies that $w + v \equiv 0$ on $U$. Since 
$\phi\in D(\cA)$ implies $L_{\cA}\phi\in L^2(M\times \mathbb{S}^1)$, it then follows 
from $w^+=\phi^+,w^-=0$  and (A3) 
that 
\begin{align*}
  0 &= \langle L_{\cA}\phi, w + v \rangle_H 
  \stackrel{\eqref{eq:SKP_vs_operator}}= \langle \phi^+,(w+v)^+\rangle_+ - \langle
  \phi^-,(w+v)^-\rangle_-
  \\
  &= \|\phi^+\|^2 - \langle \phi^-,v^-\rangle,\quad\text{hence \eqref{eq:phi_inequality} implies }v^-\neq 0.
\end{align*}
So,  by Cauchy-Schwarz and  \eqref{eq:phi_inequality},
$$
\|\phi^+\|^2   = \langle \phi^-,v^-\rangle \le  \|\phi^-\| \|v^-\| < \|\phi^+\| \|v^-\|
$$
and hence
$$
  \|v^-\|>\|\phi^+\|= \|w\|. 
$$
So Lemma~\ref{lem:criterionB3} applies and yields $w\in\cM^+$ as claimed.
\end{proof}

\section{Compact Embeddings}
\label{sec:compact-embeddings}
 
In this section we verify the compactness condition $(CE)_p$ from Theorem~\ref{main-abstract-result} in the
special settings from the
Theorems~\ref{thm-wave-1+1},~\ref{thm-S-1-intro},~\ref{thm-T-N-intro},~\ref{thm-intro-sphere} and
\ref{thm-intro-sphere-klein-gordon}. We start with generalized nonlinear wave equations on tori from
Theorem~\ref{thm-T-N-intro} where the proof relies on the Hausdorff-Young inequality for Fourier series.

\subsection{Generalized wave equations on tori}

In the following assume $M=\mathbb T^N$ and $\cA=(-\Delta)^m$ where $m$ is even.
The generalized wave operator then reads 
$$
    L_{\mathcal A}:= \partial_{tt}+(-\Delta)^m
    \qquad\text{on}\quad \mathbb{T}^N \times \mathbb{S}^1.
  $$  
By Proposition~\ref{prop-self-adjoint-realization}, an orthonormal basis of $L^2(\mathbb{T}^N \times
\mathbb{S}^1)$ of eigenfunctions of $L_{\mathcal A}$ is given by
\begin{align*}
     \phi_{kl}(x,t) = c_0\, e_l(t)e_{k_1}(x_1) \cdot \ldots \cdot e_{k_N}(x_N)  
     \qquad \text{with }\quad l\in \Z, k\in\Z^N.
\end{align*}
Here $c_0$ is normalization constant, and the functions $e_l$ are given in \eqref{eq:def-e-l}. The
corresponding eigenvalues read 
$$
  \lambda_{kl}: = |k|^{2m}-l^2 = l_k^2-l^2,
    \qquad\text{where}\quad l_k:= |k|^m = (k_1^2+\ldots+k_N^2)^{\frac{m}{2}}   
$$
Since $m$ is even, $l_k$ is a natural number for all $k\in\Z^N$ and so 
$0$ is an eigenvalue of infinite multiplicity (the same holds for $N=1$, but this case will be covered by the
analysis for wave equations on spheres further below.)

  \begin{cor}  \label{cor-compact-embedding-torus}
     Assume $N\in\N,m\in\N$ and $M=\mathbb{T}^N,\cA=(-\Delta)^m$ where $m$ is even.
    Then the embedding $E^+\oplus E^- \hookrightarrow L^q(\mathbb{T}^N\times \bS^1)$ is compact provided that
    $1\leq q<\frac{2N}{(N-m)_+}$.
  \end{cor}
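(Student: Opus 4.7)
My plan is to combine the Hausdorff--Young inequality for Fourier series on $\mathbb{T}^N\times\bS^1$ with a weighted H\"older estimate and a lattice counting argument. For $u\in E^+\oplus E^-$ with expansion $u=\sum_{\lambda_{kl}\neq 0}a_{kl}\phi_{kl}$ the defining norm is $\|u\|^2=\sum_{\lambda_{kl}\neq 0}|\lambda_{kl}|\,|a_{kl}|^2$. I would fix $2<q<\infty$ with conjugate exponent $q'=q/(q-1)\in(1,2)$ and invoke Hausdorff--Young on the $(N{+}1)$-torus to bound $\|u\|_{L^q}\le C_q\bigl(\sum|a_{kl}|^{q'}\bigr)^{1/q'}$. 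Writing $|a_{kl}|^{q'}=(|a_{kl}|^2|\lambda_{kl}|)^{q'/2}|\lambda_{kl}|^{-q'/2}$ and applying H\"older's inequality with exponents $2/q'$ and $2/(2-q')$ would then yield
$$
  \|u\|_{L^q}\;\le\; C\,\|u\|\cdot\Bigl(\sum_{\lambda_{kl}\neq 0}|\lambda_{kl}|^{-s}\Bigr)^{(2-q')/(2q')}
  \qquad\text{with}\quad s:=\tfrac{q}{q-2}.
$$

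The decisive combinatorial step would be to verify that
$S(s):=\sum_{(k,l)\in\Z^N\times\Z,\,|k|^{2m}\neq l^2}\bigl||k|^{2m}-l^2\bigr|^{-s}$
is finite whenever $s>N/m$, which translates into $q<2N/(N-m)_+$. This is where the evenness of $m$ becomes crucial: since $|k|^2\in\N_0$ one has $|k|^m=(|k|^2)^{m/2}\in\N_0$, so the factorisation $|k|^{2m}-l^2=(|k|^m-l)(|k|^m+l)$ has integer factors and $||k|^{2m}-l^2|\ge 1$ whenever it is nonzero. I would then group tuples by $n:=|k|^2$, use the sum-of-squares bound $r_N(n)\les n^{N/2-1+\eps}$ on the number of $k\in\Z^N$ with $|k|^2=n$, and show that for fixed $n\in\N$ and $s>1$ the inner sum $\sum_{l:\,l^2\neq n^m}|n^m-l^2|^{-s}$ is dominated by $n^{-ms/2}$: for $l$ close to $\pm n^{m/2}$ the factorisation gives $|n^m-l^2|\asymp|l\mp n^{m/2}|\cdot n^{m/2}$, while the off-resonant region $||l|-n^{m/2}|\ges n^{m/2}$ yields only lower-order contributions. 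Summing over $n$ then gives $S(s)<\infty$ iff $ms>N$.

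With these pieces in place the continuous embedding $E^+\oplus E^-\hookrightarrow L^q$ follows for $2<q<2N/(N-m)_+$; the remaining range $1\le q\le 2$ is then obtained from the continuous inclusion $L^{q_*}\hookrightarrow L^q$ for some admissible $q_*>q$, using that $\mathbb{T}^N\times\bS^1$ has finite measure. To upgrade this to a compact embedding I would introduce the orthogonal projection $E_R$ onto the finite-dimensional span of those $\phi_{kl}$ with $|k|^{2m}+l^2\le R$; applying the same chain of inequalities to $(I-E_R)u$ would give
$$
  \|(I-E_R)u\|_{L^q}\;\le\; C\,\|u\|\cdot\Bigl(\sum_{|k|^{2m}+l^2>R,\,\lambda_{kl}\neq 0}|\lambda_{kl}|^{-s}\Bigr)^{(2-q')/(2q')},
$$
whose right-hand tail tends to zero uniformly in $u$ as $R\to\infty$ by the convergence of $S(s)$. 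Since each $E_R$ is finite-rank and hence compact, the embedding then follows as a norm limit of compact operators. The principal technical hurdle is the lattice counting estimate for $S(s)$: one must carefully split $\Z^N\times\Z$ into the near-resonant and off-resonant regions and exploit the integer factorisation, as foreshadowed in Remark~\ref{rem:polyharmonicwaves_on_torus}, and this is exactly where the threshold $q<2N/(N-m)_+$ arises.
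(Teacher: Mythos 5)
Your proof follows essentially the same strategy as the paper's: expand $u$ in the eigenbasis $\phi_{kl}$, apply Hausdorff--Young on the $(N{+}1)$-torus, pull out the $E^+\oplus E^-$ norm by H\"older with exponents $(2/q',\,2/(2-q'))$, and reduce everything to the convergence of $\sum_{\lambda_{kl}\neq 0}|\lambda_{kl}|^{-q/(q-2)}$. The key estimate on the inner sum over $l$ is also the same: for fixed $k\neq 0$, integer factorisation and $||l|-|k|^m|\geq 1$ give $\sum_{l}||k|^{2m}-l^2|^{-s}\les |k|^{-ms}$. The compactness argument by tail projections $I-E_R$ is the explicit version of what the paper calls ``approximation by finite linear combinations.''

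The one place where you deviate from the paper is cosmetic and slightly risky. You reorganise the outer sum by $n=|k|^2$ and invoke the sum-of-squares estimate $r_N(n)\les n^{N/2-1+\eps}$. This is a genuine number-theoretic input (it is a divisor-function bound for $N=2$, and uses results on representations by quadratic forms for $N=3,4$), and it is simply false for $N=1$, where $r_1(n)\in\{0,2\}$ while $n^{-1/2+\eps}\to 0$ (the conclusion $\sum_n r_1(n)n^{-ms/2}<\infty$ for $ms>1$ is nevertheless true). The paper avoids all of this by summing directly over $k\in\Z^N$ in shells: $\sum_{k\neq 0}|k|^{-ms}\sim\sum_{r\geq 1}r^{-ms}r^{N-1}$, which converges iff $ms>N$. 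Since $\sum_n r_N(n)n^{-ms/2}$ is literally equal to $\sum_{k\neq 0}|k|^{-ms}$, your detour through $r_N(n)$ buys nothing and costs an unnecessary (and in one corner case incorrect) lemma; I would replace it by the direct shell count. Otherwise the proposal is correct and matches the paper's proof.
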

  \begin{proof}
    Since $\mathbb{T}^N\times \bS^1$ has finite measure, it suffices to prove the result for exponents
    $q>2$. Write $Z_*:= \{(k,l)\in\Z^N\times\Z: \lambda_{kl}\neq 0\}$ and, for $u\in E^+\oplus E^-$, 
    $$ 
      c_{kl}(u) := \int_{\T^N\times \bS^1} u(x,t)\phi_{kl}(x,t)\,d(x,t).
    $$
    The Hausdorff-Young inequality for $N$-dimensional Fourier series and H\"older's inequality give 
    \begin{align*}
      \|u\|_q
      &= \Big\| \sum_{(k,l)\in {\Z_*}} c_{kl}(u)\phi_{kl} \Big\|_q 
      \les \Big\| \big(c_{kl}(u)\big) \Big\|_{l^{q'}(Z_*)} \\
      &\les \Big\| \big(|\lambda_{kl}|^{1/2}c_{kl}(u)\big) \Big\|_{l^2(Z_*)} \Big\|
      \big(|\lambda_{kl}|^{-1/2}\big) \Big\|_{l^{\frac{2q}{q-2}}(Z_*)} \\      
      &\sim \|u\|_{E^+\oplus E^-} \Big(\sum_{(k,l)\in Z_*}
      |\lambda_{kl}|^{-\frac{q}{q-2}}\Big)^{\frac{q-2}{2q}}
  \end{align*}  
So it remains to show that the latter sum is finite. 
We shall use the following estimates for $k\neq 0$: 
 \begin{align*}
   &|(l_k+p)^2-l_k^2|
    = 2pl_k + p^2 \ge 2p l_k
    \gtrsim p |k|^m 
    &&\text{for }p\in\N, \\ 
  &|(l_k-p)^2-l_k^2| 
   =  p (2l_k-p)  
   \ge p l_k
   \gtrsim p|k|^m 
   &&\text{for }p=1,\ldots,l_k.
 \end{align*}    
 This gives
\begin{align*}
    \sum_{\lambda_{kl}\neq 0} |\lambda_{kl}|^{-\frac{q}{q-2}} 
    &\les \sum_{k\in\Z^N} \sum_{l\in\Z, l \not = \pm l_k} 
    |l^2-l_k^2|^{-\frac{q}{q-2}} \\
    &\les  \sum_{l=1}^\infty  (l^2)^{-\frac{q}{q-2}} 
    + \sum_{k\in\Z^N\sm\{0\}}    \left( \sum_{p=1}^\infty 
    (p|k|^m)^{-\frac{q}{q-2}} + \sum_{p=1}^{|k|^m} (p|k|^m)^{-\frac{q}{q-2}} \right)  \\
    &\sim  \sum_{l=1}^\infty l^{-\frac{2q}{q-2}} + 
    \sum_{k\in\Z^N\sm\{0\}}   |k|^{-\frac{mq}{q-2}} \sum_{p=1}^\infty  
    p^{-\frac{q}{q-2}}  \\
    &\sim 1 +  \sum_{k\in\N_0^N\sm\{0\}}   |k|^{-\frac{mq}{q-2}} 
    \;\sim \; 1 +   \sum_{r=1}^\infty   r^{-\frac{mq}{q-2}}\cdot r^{N-1}.
\end{align*}  
  The exponent is less than $-1$ if and only if  $q<\frac{2N}{(N-m)_+}$, and the continuity of the
  embedding is proved via the estimate $\|u\|_q\les \|u\|_{E^+\oplus E^-}$. The compactness follows by
  approximation by finite linear combinations of the orthonormal basis. 
\end{proof}

\subsection{Generalized wave equations on spheres}

We now focus on $M=\mathbb{S}^N$ and $\cA=(-\Delta_{\bS^N})^m$, which leads to the generalized
wave operator $$
    L_{\cA} := \partial_{tt}+(-\Delta_{\bS^N})^m
    \qquad\text{on}\quad \bS^N \times \mathbb{S}^1.
$$
In view of Proposition~\ref{prop-self-adjoint-realization} this operator admits an orthonormal basis in
$L^2(\bS^N\times \mathbb{S}^1)$ of eigenfunctions given by the functions
  \begin{equation}
    \label{eq:def-phi-k-l-i}
   \phi_{kli}(x,t) = c_0\,  e_l(t)Y_{k{i}}(x)
    \quad \text{with}\quad k\in\N_0, 1 \le {i} \le \rho_k,l\in\Z,  
  \end{equation}
  Here $c_0$ is normalization constant, the functions $e_l$ are given in \eqref{eq:def-e-l} and, for $k \in
  \N_0$, the functions $Y_{k1},\dots,Y_{k \rho_k}$ form an orthonormal basis in $L^2(\mathbb{S}^N)$ of the
  subspace of spherical harmonics of degree $k$, which is precisely the eigenspace of the Laplace-Beltrami
  operator $-\Delta_{\bS^N}$ associated with the $k$-th eigenvalue $\tilde \nu_k = k(k+N-1)$. 
   It is known that the dimension $\rho_k$ of this eigenspace is given by
  $\rho_k= N_k-N_{k-2}$ with $N_k=\binom{N+k}{k}$, but our analysis does not rely on the growth of $\rho_k$
  as $k\to\infty$.
  Since $(-\Delta_{\bS^N})^m Y_{k{i}}= \nu_k Y_{k{i}}$ with $\nu_k := \tilde \nu_k^m=k^m(k+N-1)^m$, the
  eigenvalues of $L_{\cA}$ associated with the functions $\phi_{kl{i}}$ are given by 
  $$
    \lambda_{k,l} := \nu_k-l^2 = l_k^2-l^2 \qquad\text{with}\qquad l_k:=k^{\frac{m}{2}}(k+N-1)^{\frac{m}{2}}.
  $$
  In order to single out the case where $L_{\cA}$ has an infinite-dimensional kernel we assume
  $$ 
    N=1\quad\text{or}\quad m \text{ is even}.
  $$
  Indeed, in this case we have $\lambda_{kl}=0 \Leftrightarrow |l|=l_k$ where $l_k$ is a natural number for
  all $k\in\N_0$.
  
  \medskip
  
To prove the embeddings of $E^+\oplus E^-$ we perform an eigenfunction expansion. 
To this end we introduce, for any given $w\in L^2(\bS^N)$, the
projection on the space of spherical harmonics of degree $k$ by the formula 
$$
  (\Pi_k w)(x) := \sum_{i=1}^{\rho_k} \skp{w}{Y_{ki}}_{L^2(\bS^N)} Y_{ki}(x).
$$
The eigenfunction expansion of the Laplace-Betrami operator on $\bS^N$ gives 
\begin{equation} \label{eq:EigenfunctionExpansionSN}
  w= \sum_{k=0}^\infty \Pi_k w \;\text{in }L^2(\bS^N)
  \quad\text{as well as}\quad  
  \sum_{k=0}^\infty \|\Pi_k u\|_{L^2(\bS^N)}^2 = \|u\|_{L^2(\bS^N)}^2.     
\end{equation} 
Sogge's bounds from \cite[Theorem~4.2]{Sogge} give
\begin{equation}\label{eq:Sogge_bound}
  \|\Pi_k w\|_{L^p(\bS^N)} \les (1+k)^{\sigma_p} \|\Pi_k w\|_{L^2(\bS^N)},
  \quad\text{where } 
  \sigma_p := \begin{cases}
    \frac{(N-1)(p-2)}{4p}   &,\text{if } 2\leq p\leq \frac{2(N+1)}{N-1} \\
    \frac{p(N-1)-2N}{2p}  &,\text{if } p\geq \frac{2(N+1)}{N-1} 
  \end{cases},
\end{equation}
see also \cite[Theorem~4]{Zhou_waveSn}. Note that the theorem is stated for $N\geq 2$, but it trivially
extends to the case $N=1$. 
It will turn out useful to choose ``mode shift'' $k_l\in\N_0$ onto
the most resonant spherical harmonic. Roughly speaking, we will choose $k_l$ in such a way that 
$|\nu_{k}-l^2|$ is smallest possible or close to smallest possible when $k=k_l$. Writing $u_l(x):=
\skp{u(x,\cdot)}{e_l}_{\bS^1}$ we obtain, for any given $u\in E^+\oplus E^-$, the formal expansion 
$$
  u(x,t) 
  = \sum_{l\in\Z} u_l(x) e_l(t)
  = \sum_{l\in\Z, k\in\N_0, \nu_k\neq l^2} \Pi_k(u_l)(x) e_l(t)
  = \sum_{j\in\Z} \sum_{l\in\Z_j}  \Pi_{k_l+j} (u_l)(x) e_l(t)
  = \sum_{j\in\Z} T_j(u)(x,t) 
$$
where 
$$
  T_j(u)(x,t) := \sum_{l\in\Z_j}  \Pi_{k_l+j} (u_l)(x) e_l(t),\qquad 
  \Z_j= \{l\in\Z: k_l+j\geq 0,\,\nu_{k_l+j}-l^2\neq 0\}.
$$
Here we used that contributions related to indices with $\nu_k=l^2$ are trivial for functions
in $E^+\oplus E^-$. 
 
\begin{prop} \label{prop:EmbeddingSn}
  For all $p\in (2,\infty]$ we have 
  $$
    \|u\|_{L^p(\bS^N\times \bS^1)} 
    \les   \Big(\sum_{j\in\Z} \Big( \sum_{l\in\Z_j}   |\nu_{k_l+j}-l^2|^{-\frac{p}{p-2}}
    (1+k_l+j)^{\frac{2p\sigma_p}{p-2}} \Big)^{\frac{p-2}{p}}  \Big)^{\frac{1}{2}}
    \|u\|_{E^+\oplus E^-}
  $$
  The embedding $E^+\oplus E^-\hookrightarrow L^p(\bS^N\times \bS^1)$ is compact if the series is convergent.
\end{prop}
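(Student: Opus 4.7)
The approach is to estimate each block $T_j(u)$ separately in $L^p(\bS^N\times\bS^1)$ and then sum in $j$ via Cauchy--Schwarz, using that the squared energy norm decomposes orthogonally as $\|u\|_{E^+\oplus E^-}^2 \asymp \sum_{j\in\Z} S_j$ with
$$
S_j := \sum_{l\in\Z_j} |\nu_{k_l+j}-l^2|\,\|\Pi_{k_l+j}(u_l)\|_{L^2(\bS^N)}^2.
$$
Indeed, the substitution $k=k_l+j$ shows that every pair $(k,l)\in\N_0\times\Z$ with $\nu_k\neq l^2$ contributes to exactly one slice, namely $j=k-k_l$.

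The heart of the argument is a four-inequality cascade bounding one block. For fixed $x\in\bS^N$, the Hausdorff--Young inequality on $\bS^1$ (valid for $p\in[2,\infty]$ with conjugate exponent $p'=p/(p-1)$) gives
$$
\|T_j(u)(x,\cdot)\|_{L^p(\bS^1)} \lesssim \Big(\sum_{l\in\Z_j} |\Pi_{k_l+j}(u_l)(x)|^{p'}\Big)^{1/p'}.
$$
Taking the $L^p(\bS^N)$ norm in $x$ and applying Minkowski's integral inequality, which is permissible because $p/p'\geq 1$, produces
$$
\|T_j(u)\|_{L^p(\bS^N\times\bS^1)} \lesssim \Big(\sum_{l\in\Z_j} \|\Pi_{k_l+j}(u_l)\|_{L^p(\bS^N)}^{p'}\Big)^{1/p'}.
$$
Then Sogge's bound~\eqref{eq:Sogge_bound} converts each $L^p(\bS^N)$ norm into an $L^2(\bS^N)$ norm at the cost of the factor $(1+k_l+j)^{\sigma_p}$, and a weighted H\"older inequality in $l$ with conjugate exponents $2/p'$ and $2/(2-p')$ -- both $\geq 1$ since $p'\in[1,2)$, which is where $p>2$ is used -- splits off the energy contribution. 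Using the arithmetic identity $2p'/(2-p')=2p/(p-2)$, this yields the block bound
$$
\|T_j(u)\|_{L^p(\bS^N\times\bS^1)} \lesssim S_j^{1/2}\cdot B_j^{1/2},\qquad B_j := \Big(\sum_{l\in\Z_j} (1+k_l+j)^{\frac{2p\sigma_p}{p-2}} |\nu_{k_l+j}-l^2|^{-\frac{p}{p-2}}\Big)^{\frac{p-2}{p}}.
$$

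Summing in $j$ by the triangle inequality and then Cauchy--Schwarz gives $\|u\|_{L^p}\lesssim (\sum_j B_j)^{1/2}\|u\|_{E^+\oplus E^-}$ thanks to $\sum_j S_j\asymp \|u\|_{E^+\oplus E^-}^2$. For the compactness assertion, I would approximate the embedding in operator norm by the finite-rank operators obtained by truncating to $|j|\leq J$ and $|l|\leq L$; the identical cascade applied to the remainder bounds its operator norm by the tail of the (assumed convergent) series, so the embedding is a norm limit of compact operators and therefore compact. The principal obstacle is tracking the exponents consistently through the Hausdorff--Young/Minkowski/H\"older chain and ensuring that every Minkowski step is applied with an integrated exponent $\geq 1$; this is precisely the point where the hypothesis $p>2$ enters.
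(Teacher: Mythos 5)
Your proof is correct and follows essentially the same approach as the paper's: the same block decomposition into the $T_j(u)$, the same four-step chain (Hausdorff--Young in $t$, Minkowski/triangle inequality in $L^{p/p'}(\bS^N)$, Sogge's bound, weighted H\"older in $l$), the same orthogonal Plancherel identity $\sum_j \|T_j u\|_{E^+\oplus E^-}^2 = \|u\|_{E^+\oplus E^-}^2$, and the same final Cauchy--Schwarz in $j$; the compactness argument via finite-rank truncation is also the one the paper uses.
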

\begin{proof}
  We first use the Hausdorff-Young inequality with respect to the time variable
  \begin{align*}
    \|T_j u\|_{L^p(\bS^N\times \bS^1)}
    &= \Big\|  \big\|\sum_{l\in\Z_j}  \Pi_{k_l+j} (u_l)(x) e_l(t)\big\|_{L_t^p(\bS^1)} \Big\|_{L^p_x(\bS^N)}
    \\
    &\les \Big\|  \big\|( \Pi_{k_l+j} (u_l)(x)) \big\|_{l^{p'}(\Z_j)}  \Big\|_{L^p_x(\bS^N)} \\
    &= \Big\|  \sum_{l\in\Z_j} |\Pi_{k_l+j} (u_l)|^{p'}  \Big\|_{L^{\frac{p}{p'}}(\bS^N)}^{\frac{1}{p'}}. 
  \end{align*}
  Then the triangle inequality, which applies due to $\frac{p}{p'}=p-1\geq 1$, and \eqref{eq:Sogge_bound} give
  \begin{align*}
    \|T_j u\|_{L^p(\bS^N\times \bS^1)}^{p'}
    &\les    \sum_{l\in\Z_j}   \| |\Pi_{k_l+j} (u_l) |^{p'} \|_{L^{\frac{p}{p'}}(\bS^N)} \\
    &=    \sum_{l\in\Z_j}   \| \Pi_{k_l+j} (u_l)  \|_{L^p (\bS^N)}^{p'} \\
    &\les    \sum_{l\in\Z_j}     (1+k_l+j)^{\sigma_p p'} \| \Pi_{k_l+j} (u_l) 
    \|_{L^2(\bS^N)}^{p'} \\
    &=   \Big\| \Big(  (1+k_l+j)^{\sigma_p} \| \Pi_{k_l+j} (u_l)\|_{L^2 (\bS^N)}\Big)
    \Big\|_{l^{p'}(\Z_j)}^{p'}.
  \end{align*}
  Next, H\"older's inequality with $\frac{1}{p'}=\frac{1}{2}+\frac{p-2}{2p}$ implies 
  ($\lambda_{k,l}=\nu_k-l^2$)
  \begin{align*}
    \|T_j u\|_{L^p(\bS^N\times \bS^1)}
    &\leq  \Big\| \Big(  |\lambda_{k_l+j,l}|^{\frac{1}{2}}\| \Pi_{k_l+j} (u_l)
    \|_{L^2(\bS^N)} \Big) \Big\|_{l^2(\Z_j)} \cdot 
    \Big\| \Big( |\lambda_{k_l+j,l}|^{-\frac{1}{2}} (1+k_l+j)^{\sigma_p} \Big)
    \Big\|_{l^{\frac{2p}{p-2}}(\Z_j)} \\
    &\leq \Big(  \sum_{l\in\Z_j}   |\lambda_{k_l+j,l}|\| \Pi_{k_l+j} (u_l) 
    \|_{L^2(\bS^N)}^2  \Big)^{\frac{1}{2}}  \cdot 
    \Big( \sum_{l\in\Z_j}  |\lambda_{k_l+j,l}|^{-\frac{p}{p-2}}
    (1+k_l+j)^{\frac{2p\sigma_p}{p-2}} \Big)^{\frac{p-2}{2p}}   \\
    &= \|T_j u\|_{E^+\oplus E^-}   
    \Big( \sum_{l\in\Z_j}  |\nu_{k_l+j}-l^2|^{-\frac{p}{p-2}} (1+k_l+j)^{\frac{2p\sigma_p}{p-2}}
    \Big)^{\frac{p-2}{2p}}.   
  \end{align*}
   From \eqref{eq:EigenfunctionExpansionSN} we get  
  $$
    \sum_{j=0}^\infty \|T_j u\|_{E^+\oplus E^-}^2 = \|u\|_{E^+\oplus E^-}^2,   
  $$
  so $u=\sum_{j=0}^\infty T_ju$ implies
  \begin{align*}
    \| u\|_{L^p(\bS^N\times \bS^1)}
    &\leq \sum_{j=0}^\infty \|T_j u\|_{L^p(\bS^N\times \bS^1)} \\
    &\les \sum_{j=0}^\infty \|T_j u\|_{E^+\oplus E^-}   
    \Big( \sum_{l\in\Z_j}  |\nu_{k_l+j}-l^2|^{-\frac{p}{p-2}} (1+k_l+j)^{\frac{2p\sigma_p}{p-2}}
    \Big)^{\frac{p-2}{2p}} \\
    &\les \| u\|_{E^+\oplus E^-} \Big(\sum_{j=0}^\infty
    \Big( \sum_{l\in\Z_j}  |\nu_{k_l+j}-l^2|^{-\frac{p}{p-2}} (1+k_l+j)^{\frac{2p\sigma_p}{p-2}}
    \Big)^{\frac{p-2}{p}}  \Big)^{\frac{1}{2}}.
  \end{align*}
  This proves  the estimate and, as above, the compactness  of the embedding.
\end{proof}

  \begin{cor}  \label{cor-compact-embedding-m-even-S-N}
    Assume  $M=\mathbb{S}^N$ and $\cA=(-\Delta_{\bS^N})^m$ 
    with $N=1$ or $m\in\N$ even. Then the embedding $E^+\oplus E^- \hookrightarrow L^p(M \times \mathbb{S}^1)$
    is compact provided that $1\leq p< \frac{2(N+1)}{(N-m)_+}$. 
  \end{cor}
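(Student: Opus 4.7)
The plan is to apply Proposition \ref{prop:EmbeddingSn} with a suitable choice of the mode shift $k_l \in \N_0$ and verify the convergence of the resulting double series. For each $l \in \Z$ I would pick $k_l$ to (nearly) minimize $|l_k - |l||$, with a small modification in the resonant case $l_{k_l} = |l|$ so that $\nu_{k_l} \neq l^2$. Since $l_k = (k(k+N-1))^{m/2}$ (which reduces to $k^m$ when $N=1$) satisfies $l_{k+1} - l_k \sim m(1+k)^{m-1}$ and $l_k \sim (1+k)^m$, this yields $k_l \sim |l|^{1/m}$ and $|l_{k_l} - |l|| \les (1+k_l)^{m-1}$.

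The key estimate is then the eigenvalue-gap bound
\[
  |\nu_{k_l+j} - l^2| = |l_{k_l+j} - |l|| \cdot (l_{k_l+j} + |l|) \ges |j|\,(1+k_l+|j|)^{2m-1},
\]
valid for $j\in\Z$ with $k_l+j\geq 0$ and $\nu_{k_l+j} \neq l^2$. The first factor is bounded from below by exploiting the fact that the choice of $k_l$ separates $l_{k_l+j}$ from $|l|$ by at least half of the relevant eigenvalue gap, and the second factor is of order $(1+k_l+|j|)^m$. Reindexing the inner sum by $k = k_l$, and using that for each $k \geq \max(0,-j)$ there are $\les (1+k)^{m-1}$ indices $l$ with $k_l = k$, I obtain with $\alpha = p/(p-2)$ and $\beta = 2p\sigma_p/(p-2)$:
\[
  \sum_{l \in \Z_j} |\nu_{k_l+j} - l^2|^{-\alpha}(1+k_l+j)^{\beta} \les |j|^{-\alpha} \sum_{k\geq 0} (1+k)^{m-1}(1+k+|j|)^{\beta-(2m-1)\alpha}.
\]
Provided $\beta - (2m-1)\alpha < -m$, the $k$-sum is $\sim (1+|j|)^{m+\beta-(2m-1)\alpha}$ for large $|j|$, giving an inner sum of order $(1+|j|)^{m+\beta-2m\alpha}$. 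Raising to the $\tfrac{p-2}{p}$-power and summing over $j$ yields convergence precisely when $m + \beta - 2m\alpha < -\alpha$, which after simplification amounts to the single condition
\[
  \sigma_p < \frac{m-1}{2} + \frac{m}{p}.
\]

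To conclude, I would check that this inequality is implied by $p < 2(N+1)/(N-m)_+$ in both of Sogge's regimes. For $2 \leq p \leq 2(N+1)/(N-1)$ one has $\sigma_p = (N-1)(p-2)/(4p)$ and the condition becomes $(N-2m+1)p < 2(N+2m-1)$, which is automatic: trivially when $N \leq 2m-1$, and otherwise because $(N+1)(N-2m+1) \leq (N-1)(N+2m-1)$ reduces to $N(m-1) \geq 0$. For $p \geq 2(N+1)/(N-1)$ one has $\sigma_p = (N-1)/2 - N/p$, and the condition becomes $(N-m)/2 < (N+m)/p$, which follows from $p(N-m) < 2(N+1) \leq 2(N+m)$ (and is trivial if $N \leq m$). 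Compactness of the embedding is then obtained, just as in the proof of Corollary~\ref{cor-compact-embedding-torus}, by approximating arbitrary elements of $E^+ \oplus E^-$ by finite linear combinations of the eigenfunctions $\phi_{kli}$. The main technical obstacle will be the uniform derivation of the eigenvalue-gap bound across the regimes $|j| \ll k_l$ (where the linearization $l_{k+j} - l_k \approx jm k^{m-1}$ dominates) and $|j| \gtrsim k_l$ (where higher order terms matter), and the separate handling of the corner cases where $k_l$ or $k_l+j$ is small.
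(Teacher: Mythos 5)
Your $j\neq 0$ analysis is essentially the paper's and is sound: the gap estimate $|\nu_{k_l+j}-l^2|\gtrsim|j|(1+k_l+|j|)^{2m-1}$, the counting $\#\{l:k_l=k\}\lesssim(1+k)^{m-1}$, and the resulting condition $\sigma_p<\tfrac{m-1}{2}+\tfrac{m}{p}$ (equivalently $(N-m)p<2(N+m)$ in the second Sogge regime) all check out. However, there is a genuine gap: you never handle the diagonal $j=0$, which turns out to be the binding constraint.

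For $j=0$ your stated gap bound is vacuous (the right-hand side is $0$). Since $l_k\in\N_0$ for all $k$, the natural fallback is $|l_{k_l}-|l||\geq 1$ whenever $\nu_{k_l}\neq l^2$, giving $|\nu_{k_l}-l^2|\gtrsim(1+k_l)^m$. Feeding this into the counting argument produces
$$\sum_{l\in\Z_0}|\nu_{k_l}-l^2|^{-\alpha}(1+k_l)^{\beta}\lesssim\sum_{k\geq 0}(1+k)^{m-1-m\alpha+\beta},$$
which converges iff $\beta<m\alpha-m$, i.e.\ $\sigma_p<\tfrac{m}{p}$. In the second Sogge regime this is $p<\tfrac{2(N+m)}{N-1}$, which is \emph{strictly smaller} than the advertised $p^*=\tfrac{2(N+1)}{(N-m)_+}$ whenever $m\geq 2$ and $N>m$ (the comparison $(N+1)(N-1)\leq(N+m)(N-m)$ holds iff $m\leq 1$). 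Concretely for $N=3$, $m=2$ you would only reach $p<5$ instead of the claimed $p<8$. The point is that the nearest-integer shift $k_l$ can sit arbitrarily close to the real resonance $k_l^*$, and the crude bound $|l_{k_l}-|l||\geq 1$ wastes the fact that \emph{most} indices $l$ with a given $k_l=k$ are much farther from resonance. The paper resolves this with a dedicated arithmetic analysis of the diagonal: parametrizing $l=r^m+s$ (with $s$ ranging over the $\sim r^{m-1}$ values assigning $k_l=r$), identifying the unique resonant $s^*(r)$, and showing $|k_l-k_l^*|\sim r^{1-m}|\tilde s|$ with $\tilde s=s-s^*(r)\in\Z\sm\{0\}$, so that $|\nu_{k_l}-l^2|\sim r^m|\tilde s|$; the convergent inner sum $\sum_{\tilde s}|\tilde s|^{-\alpha}$ then recovers the factor $(1+k)^{m-1}$ that the crude bound loses, yielding the sharper condition $\beta<m\alpha-1$, i.e.\ $\sigma_p<\tfrac{m-1}{2}+\tfrac{1}{p}$, which is exactly $p<\tfrac{2(N+1)}{(N-m)_+}$. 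You would need to supply an argument of this kind to close the gap.
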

\begin{proof} 
   We have $\nu_k = k^m(k+N-1)^m$, so choose 
$$
  k_l := [k_l^*] \quad\text{where } k_l^*:= - \frac{N-1}{2} + \sqrt{|l|^{\frac{2}{m}}+
  \big(\frac{N-1}{2}\big)^2} \geq 0 
$$
In other words, $k_l^*$ is the unique nonnegative real solution of $k^m(k+N-1)^m = l^2$ and $k_l\in\N_0$ is a
closest integer to $k_l^*$. We recall 
$$
  \Z_j
  = \{l\in\Z : k_l+j\in\N_0, \nu_{k_l+j}-l^2\neq 0\}
  = \{l\in\Z : k_l+j\in\N_0, k_l+j\neq k_l^*\}.
$$
Define $j^*\in\R$ via $k=k_l+j=k_l^*+j^* \in\N_0$  
and $A_{lj}:=j^*(j^*+2k_l^*+N-1)$. Then
\begin{align*}
  \nu_k-l^2
   &= k^{m}(k+N-1)^{m}-l^2 \\
   &=   (k^2+(N-1)k)^{m}-|l|^2 \\
   &=   \Big((k_l^*)^2+(N-1)k_l^*+j^*(j^*+2k_l^*+N-1)\Big)^{m}-|l|^2 \\
   &=   \Big(|l|^{\frac{2}{m}}+A_{lj}\Big)^{m}-|l|^2 \\
   &=   |l|^2 \Big( \Big(1+|l|^{-\frac{2}{m}}A_{lj}\Big)^{m}-1\Big).      
\end{align*}
From $k_l^*+j^*=k\geq 0$ and $(k_l^*,j^*)\neq (0,0)$ for $l\in\Z_j$ we infer
$$
  |A_{lj}|
  = |j^*| (j^*+2k_l^*+N-1)
  \sim |j^*| \max\{k_l^*,j^*\}
  \sim |j^*| (|l|+j_+^m)^{\frac{1}{m}}. 
$$
This implies $1+k_l+j \les |l|^{\frac{1}{m}}+ j_+$ and  
\begin{align} \label{eq:gap_estimate}
  \begin{aligned}
  |\nu_{k_l+j}-l^2|
   &=   |l|^2 \Big(  |l|^{-\frac{2}{m}}|A_{lj}| + (|l|^{-\frac{2}{m}}|A_{lj}|)^m\Big)\\
   &\sim \begin{cases}
        |l|^{\frac{2m-2}{m}}|A_{lj}| &, \text{if }|A_{lj}|\les |l|^{\frac{2}{m}}  \\
        |A_{lj}|^m &, \text{if }|A_{lj}|\ges |l|^{\frac{2}{m}}
   \end{cases} \\
   &\sim  \begin{cases}
        |l|^{\frac{2m-1}{m}}|j^*| &, \text{if }|j|\les |l|^{\frac{1}{m}}  \\
        j^{2m} &, \text{if }j\ges |l|^{\frac{1}{m}},
   \end{cases}           
\end{aligned}
\end{align}
Note that $j+k_l\geq 0$ implies that all relevant  cases are covered.

\medskip

\textbf{1st case: $j\neq 0$.}\; We then have $|j-j^*|=|k_l-k_l^*|\leq \frac{1}{2}$ and thus 
$\frac{1}{2}|j|\leq |j^*|\leq \frac{3}{2}|j|$. Note that this is false for $j=0$. So the above estimate gives
\begin{align*}
  &\sum_{l\in\Z_j}  |\nu_{k_l+j}-l^2|^{-\frac{p}{p-2}}
  (1+|l|^{\frac{1}{m}}+j_+)^{\frac{2p\sigma_p}{p-2}} \\
  &\sim \ind_{j\geq 1}  \sum_{l\in {\Z_j}, |l|\leq j^m} j^{-\frac{2mp}{p-2}+\frac{2p\sigma_p}{p-2}} +
    \sum_{l\in\Z_j, |l|\geq |j|^m} (|l|^{\frac{2m-1}{m}} |j^*|)^{-\frac{p}{p-2}}
  |l|^{\frac{2p\sigma_p}{m(p-2)}} 
    \\
  &\sim \ind_{j\geq 1}    j^{m-\frac{2mp}{p-2}+\frac{2p\sigma_p}{p-2}} +
   |j|^{-\frac{p}{p-2}}\cdot (|j|^m)^{1-\frac{(2m-1)p}{m(p-2)}+\frac{2p\sigma_p}{m(p-2)}} 
   \\ 
   &\sim   |j|^{m-\frac{2mp}{p-2}+\frac{2p\sigma_p}{p-2}}.  
\end{align*}  
So
\begin{align*}
  \sum_{|j|\geq 1} \Big( \sum_{l\in\Z_j} |\nu_{k_l+j}-l^2|^{-\frac{p}{p-2}}
  (1+|l|^{1/m}+j_+)^{\frac{2p\sigma_p}{p-2}}  \Big)^{\frac{p-2}{p}} 
  \sim \sum_{j=1}^\infty  j^{\frac{m(p-2)}{p}-2m+ 2\sigma_p}
\end{align*}
Since we are aiming for an embedding into $L^p(\bS^N\times\bS^1)$ with $p>\frac{2(N+1)}{N-1}$, we 
apply the criterion from Proposition~\ref{prop:EmbeddingSn} with $\sigma_p = \frac{p(N-1)-2N}{2p}$,
see~\eqref{eq:Sogge_bound}.
A short computation reveals that the series converges if and only if 
$(N-m)p< 2(N+m)$, which is true under the assumptions of the theorem. So it remains to deal with the
case $j=0$.

\medskip

\textbf{2nd case: $j=0$.}\; Now $j^*=k_l-k_l^*$ satisfies $0<|j^*|\leq \frac{1}{2}$, but $|j^*|$
is not  bounded away from zero any more. We have to prove that, under our assumptions, the sum
$$
  \sum_{l\in\Z_0} |\nu_{k_l}-l^2|^{-\frac{p}{p-2}}
  (1+|l|^{\frac{1}{m}})^{\frac{2p\sigma_p}{p-2}} 
$$
is finite. In view of \eqref{eq:gap_estimate} and $0\notin \Z_0$,
this is equivalent to proving 
$$
  \sum_{l\in\Z_0} (1+|j^*||l|^{\frac{2m-1}{m}})^{-\frac{p}{p-2}} |l|^{\frac{2p \sigma_p}{m(p-2)}}
  < \infty.
$$
So we study the asymptotics of $|j^*|=|k_l-k_l^*|$ as $l\to\infty$. To this end we make the ansatz $l=r^m+s$ 
$r\in\N,s\in\Z$ with $\un{s}(r)\leq s\leq \ov{s}(r)$ and $\un{s}(r)\in\Z_{\leq 0},\ov{s}(r)\in\Z_{\geq 0}$ 
defined via 
$$
  r-\frac{1}{2}\leq \sqrt{(r^m+s)^{\frac{2}{m}}+\frac{(N-1)^2}{4}}< r+\frac{1}{2} 
  \quad\Leftrightarrow\quad l=r^m+s, \un{s}(r)\leq s\leq \ov{s}(r).
$$ 
Solving these inequalities for $s$ one finds $|\un{s}(r)|,|\ov{s}(r)| = \frac{m}{2}r^{m-1} + O(r^{m-2})$ as $r\to\infty$.
For such $l,r,s$ we get
\begin{align*}
   |t^*|
   &=|k_l^*-k_l| = |k_l^*-[k_l^*]| \\
   &=  \Big|\sqrt{{|r^m+s|}^{\frac{2}{m}}+\frac{(N-1)^2}{4}}
   -\Big[\sqrt{{|r^m+s|}^{\frac{2}{m}}+\frac{(N-1)^2}{4}}\Big]\Big|  \\
   &= \Big| \sqrt{{|r^m+s|}^{\frac{2}{m}}+\frac{(N-1)^2}{4}} -r\Big| \\
   &= r \Big| \sqrt{{|1+sr^{-m}|}^{\frac{2}{m}}+\frac{(N-1)^2}{4r^2}} -1\Big|.       
\end{align*}
We define $s^*(r)\in \{\un{s}(r),\ldots,\ov{s}(r)\}$ as the uniquely determined resonant $s$-value where the
remainder term $t^*$ is exactly zero. This means 
$$
  \sqrt{(1+r^{-m}s^*(r))^{\frac{2}{m}}+\frac{(N-1)^2}{4r^2}} = 1
$$ 
or, more explicitly,
$$
  s^*(r)= -r^m+\Big(r^2-\frac{(N-1)^2}{4}\Big)^{\frac{m}{2}}.
$$
Then $l=r^m+s^*(r)$ satisfies $0=|j^*|=|k_l^*-k_l|$, so $l\in\Z_0$. As a consequence, we have $\nu_{k_l}\neq
l^2$ and the above ansatz leads to $l=r^m+s$ with $|\tilde s|\geq 1$ where $\tilde s:=s-s^*(r)$. 
We thus find for large $r$
 \begin{align*}
   |j^*|
   &\sim r \Big|\sqrt{\big(1+sr^{-m}\big)^{\frac{2}{m}}+\frac{(N-1)^2}{4r^2}} -1\Big| \\
   &\sim r \Big| \big(1+sr^{-m}\big)^{\frac{2}{m}}+\frac{(N-1)^2}{4r^2}  -1\Big| \\
   &= r \Big| \Big(1+s^*(r)r^{-m}+\tilde s r^{-m}\Big)^{\frac{2}{m}}+\frac{(N-1)^2}{4r^2} -1\Big| \\
   &= r \Big| \Big( \Big(1-\frac{(N-1)^2}{4r^2}\Big)^{\frac{m}{2}}+\tilde s
   r^{-m}\Big)^{\frac{2}{m}}+\frac{(N-1)^2}{4r^2} -1\Big| \\
   &= r \Big(1-\frac{(N-1)^2}{4r^2}\Big) \Big| \Big(1 +\tilde s
   r^{-m}\Big(1-\frac{(N-1)^2}{4r^2}\Big)^{-\frac{m}{2}}\Big)^{\frac{2}{m}}-1\Big| \\
   &\sim r \Big| \tilde s
   r^{-m}\Big(1-\frac{(N-1)^2}{4r^2}\Big)^{-\frac{m}{2}}\Big| \\     
   &\sim r^{1-m} |\tilde s|.    
 \end{align*}
 Note that the second last estimate holds because $\tilde s r^{-m}$ tends to zero uniformly as $r\to\infty$
 due to $\tilde s=O(r^{m-1})$.
 We conclude  with \eqref{eq:gap_estimate}, for $l=r^m+s \in \Z_0$,   
 \begin{align*}
   |\nu_{k_l}-l^2|
   \sim |l|^{\frac{2m-1}{m}}|j^*|
   \sim  r^{2m-1} r^{1-m}|\tilde s|  
   \sim  r^m |\tilde s|.     
 \end{align*}
This leads to  
 \begin{align*}
  \sum_{l\in\Z_0}  |\nu_{k_l}-l^2|^{-\frac{p}{p-2}}
  |l|^{\frac{2p\sigma_p}{m(p-2)}}  
  &\les 1+
  \sum_{r=r_0}^\infty \sum_{ s\in \{\un{s}(r),\ldots,\ov{s}(r)\}\sm\{0\}} 
   (r^m|\tilde s|)^{-\frac{p}{p-2}} 
  r^{\frac{2p\sigma_p}{p-2}}  \\
  &\sim 1 +  \sum_{r=r_0}^\infty r^{-\frac{mp}{p-2}+\frac{2p\sigma_p}{p-2}} \sum_{\tilde
  s\in\Z\sm\{0\}}  |\tilde s|^{-\frac{p}{p-2}}   \\
  &\sim 1 +  \sum_{r=r_0}^\infty r^{-\frac{mp}{p-2}+\frac{2p\sigma_p}{p-2}}.   
 \end{align*}
 Plugging in $\sigma_p=\frac{p(N-1)-2N}{2p}$ we find that the series converges
 if and only if $p<\frac{2(N+1)}{(N-m)_+}$, which finishes the proof. 
\end{proof}

The corresponding analysis related to the Klein-Gordon equation on spheres follows the same lines, but is
technically much simpler. The result is actually known, see \cite[Theorem~1]{Zhou_waveSn}. We present the
short proof for completeness.  
    
  \begin{cor}  \label{cor-compact-embedding-N-odd-S-N}
      Assume  $M=\mathbb{S}^N,\cA=-\Delta_{\bS^N} + c_N$ where $N\in\N$ is odd.
    Then the embedding $E^+\oplus E^- \hookrightarrow L^p({M\times\mathbb{S}^1})$ is compact provided that
    $1\leq p<\frac{2(N+1)}{N-1}$.
  \end{cor}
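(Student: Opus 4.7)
The plan is to apply Proposition~\ref{prop:EmbeddingSn} and to show that the series appearing there converges whenever $2<p<\frac{2(N+1)}{N-1}$. The key simplification compared with Corollary~\ref{cor-compact-embedding-m-even-S-N} is that the eigenvalues now factor perfectly: since
$$
\nu_k = k(k+N-1)+c_N = \Bigl(k+\tfrac{N-1}{2}\Bigr)^2
$$
and $N$ is odd, the numbers $l_k := k+\tfrac{N-1}{2}$ are nonnegative integers. This suggests the natural mode-shift choice
$$
k_l := \max\Bigl\{|l|-\tfrac{N-1}{2},\,0\Bigr\},
$$
so that for all $l\in\Z$ with $|l|\ge \tfrac{N-1}{2}$ one has $l_{k_l}=|l|$ and therefore
$$
\nu_{k_l+j}-l^2 = (|l|+j)^2 - l^2 = j\bigl(2|l|+j\bigr), \qquad 1+k_l+j \;\sim\; 1+|l|+j_+.
$$
In particular, $\Z_0$ contains at most finitely many indices (namely $|l|<\tfrac{N-1}{2}$), whose total contribution to the series is bounded.

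Since we are aiming at $p<\tfrac{2(N+1)}{N-1}$, Sogge's bound \eqref{eq:Sogge_bound} is used with $\sigma_p=\tfrac{(N-1)(p-2)}{4p}$, which gives the pleasant simplification
$$
\frac{2p\sigma_p}{p-2} = \frac{N-1}{2}.
$$
So I have to estimate, for each $j\neq 0$,
$$
S_j := \sum_{l\in\Z_j} \bigl(|j|\,|2|l|+j|\bigr)^{-\frac{p}{p-2}} (1+|l|+j_+)^{\frac{N-1}{2}}.
$$
I would split the sum at $|l|\sim |j|$. For $|l|>2|j|$ one has $|2|l|+j|\sim |l|$ and $1+k_l+j\sim |l|$, yielding
$$
\sum_{|l|>2|j|} |j|^{-\frac{p}{p-2}}|l|^{-\frac{p}{p-2}+\frac{N-1}{2}}
\;\sim\; |j|^{-\frac{p}{p-2}} \cdot |j|^{\frac{N+1}{2}-\frac{p}{p-2}}
= |j|^{\frac{N+1}{2}-\frac{2p}{p-2}},
$$
where the $l$-sum converges precisely under $\tfrac{p}{p-2}-\tfrac{N-1}{2}>1$, i.e.\ $p<\tfrac{2(N+1)}{N-1}$. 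For $|l|\le 2|j|$, the constraint $k_l+j\ge 0$ together with $j\neq 0$ forces $|2|l|+j|\gtrsim |j|$, so $|\nu_{k_l+j}-l^2|\gtrsim |j|^2$, and the same total order $|j|^{\frac{N+1}{2}-\frac{2p}{p-2}}$ is obtained by bounding the number of terms by $O(|j|)$.

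Taking the $\frac{p-2}{p}$-th power and summing over $j\in\Z\setminus\{0\}$, the resulting exponent is $\tfrac{(N+1)(p-2)}{2p}-2$, and a routine computation shows this is strictly less than $-1$ exactly when $p<\tfrac{2(N+1)}{N-1}$. Proposition~\ref{prop:EmbeddingSn} then yields the continuity and compactness of the embedding $E^+\oplus E^-\hookrightarrow L^p(\bS^N\times\bS^1)$. The main obstacle, if any, is the careful verification that small-$|l|$ or small-$|j|$ terms do not contribute divergently; but because $N$ odd makes $l_k$ integer-valued with uniform gap $1$ between consecutive squares, no delicate diophantine approximation argument as in Corollary~\ref{cor-compact-embedding-m-even-S-N} is required, which is why the proof is much shorter here.
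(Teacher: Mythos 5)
Your proof is correct and follows essentially the same route as the paper: you invoke Proposition~\ref{prop:EmbeddingSn} with the identical mode shift $k_l=(|l|-\tfrac{N-1}{2})_+$, exploit the exact factorization $\nu_{k_l+j}-l^2=j(2|l|+j)$ and the simplification $\tfrac{2p\sigma_p}{p-2}=\tfrac{N-1}{2}$, and reach the same bound $|j|^{\frac{N+1}{2}-\frac{2p}{p-2}}$ for the inner sum. The only difference is cosmetic: you split the $l$-sum at $|l|\sim|j|$ whereas the paper handles it by the sign of $j$; both give the same exponent and the same conclusion $p<\tfrac{2(N+1)}{N-1}$.
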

  \begin{proof}
   We have $\nu_k = k(k+N-1)+c_N = (k+\frac{N-1}{2})^2$, so choose $k_l:= (|l|-\frac{N-1}{2})_+$. 
   Then $\Z_0 =  \{l\in\Z : |l|<\frac{N-1}{2}\}$ is finite and so the corresponding sum  
   $$ 
     \sum_{l\in\Z_0}  |\nu_{k_l}-l^2|^{-\frac{p}{p-2}} (1+k_l)^{\frac{N-1}{2}}   
   $$
   is trivially finite. So we may concentrate on $|j|\geq 1$ where $\Z_j\subset \{l\in\Z: |l|+j\geq 0\}$.
    So any $l\in\Z_j$ satisfies $|l|+j\geq 0$ and hence 
   $$
     |\nu_{k_l+j}-l^2|
     = |(|l|+j)^2-l^2|
     = |2j|l|+j^2|
     = |j| |2|l|+j|
     \ges |j| (|l|+j_+).  
   $$
   Moreover, $1+k_l+j \leq 1+|l|+j$. So  
   \begin{align*}
     \sum_{l\in\Z_j}  |\nu_{k_l+j}-l^2|^{-\frac{p}{p-2}} (1+k_l+j)^{\frac{N-1}{2}}
     &\les \sum_{l\in\Z_j}  |j(|l|+j_+)|^{-\frac{p}{p-2}} (1+|l|+j)^{\frac{N-1}{2}} \\
     &\les |j|^{-\frac{p}{p-2}} \begin{cases}
       \sum_{l=0}^\infty  (l+j)^{-\frac{p}{p-2}}(1+l+j)^{\frac{N-1}{2}} &, \text{if }j\geq 1,\\
       \sum_{l=|j|}^\infty  l^{-\frac{p}{p-2}} (1+l-|j|)^{\frac{N-1}{2}} &, \text{if
       }j\leq -1 \end{cases} \\
     &\sim |j|^{-\frac{p}{p-2}} \cdot
       |j|^{1-\frac{p}{p-2}+\frac{N-1}{2}}   \\
     &\sim |j|^{-\frac{2p}{p-2}+\frac{N+1}{2}}.
   \end{align*}
   Here the second last estimate uses $-\frac{p}{p-2}+\frac{N-1}{2}<-1$.  
   This implies
   \begin{align*}
     &\;\sum_{j\in\Z\sm\{0\}} 
    \Big( \sum_{l\in\Z_j}  |\nu_{k_l+j}-l^2|^{-\frac{p}{p-2}} (1+k_l+j)^{\frac{N-1}{2}}
    \Big)^{\frac{p-2}{p}} \\
    &\les \sum_{j\in\Z\sm\{0\}}  \big( |j|^{-\frac{2p}{p-2}+\frac{N+1}{2}} \big)^{\frac{p-2}{p}} 
    \sim \sum_{j=1}^\infty  j^{-2 +\frac{(N+1)(p-2)}{2p}}.    
   \end{align*}
   Again, our assumption on $p$ implies that the exponent is less than $-1$. So the series converges and the
   claim follows from Proposition~\ref{prop:EmbeddingSn}.
\end{proof}
  
\section{On the control condition}
\label{sec:control-property}

The purpose of this section is to verify the control condition $(CC)_q$ from the introduction in the specific
settings associated with Theorems~\ref{thm-S-1-intro},~\ref{thm-T-N-intro},~\ref{thm-intro-sphere} and
\ref{thm-intro-sphere-klein-gordon}.
So let $(M,d,\mu)$ be a metric measure space and consider a self-adjoint
operator $\cA: D(\cA) \subset L^2(M) \to L^2(M)$ satisfying condition $(A)$ from the introduction. Moreover,
we consider the self-adjoint operator $L_{\cA}= \cA + \partial_{tt}$ on $L^2(M \times \mathbb{S}^1)$ given by
Proposition~\ref{prop-self-adjoint-realization}.

\medskip

\begin{defn} ~
  \begin{itemize}
  \item[(i)] We say that a subdomain $\Omega \subset M \times \mathbb{S}^1$ has the {\em control property} for the operator $L_\cA$ if, with some constant $C=C(\Omega)>0$, we have 
    \begin{equation*}
      \|u\|_{L^2(M \times \mathbb{S}^1)} \le C \|u\|_{L^2(\Omega)} \qquad \text{for all $u \in \ker(L_\cA)$.}
    \end{equation*}
\item[(ii)] We say that a subdomain $\omega \subset M$ has the {\em control property} for the operator $\cA$ if, with some constant $C=C(\omega)>0$, we have 
    \begin{equation}
      \label{eq:cp-space}
      \|u\|_{L^2(M)} \le C \|u\|_{L^2(\omega)} \qquad \text{for every eigenfunction $u \in D(\cA)$ of $\cA$.}
      \end{equation}
\end{itemize}
\end{defn}
Here, as before, $\ker(L_\cA)\subset  D(L_{\cA}) \subset L^2(M \times \mathbb{S}^1)$ denotes the kernel of the operator $L_\cA$.

Part (i) is clearly related to condition $(CC)_q$, as stated in the following remark.

\begin{rem}
  \label{rem-control-space-time}{\rm
(i) If $q$ is bounded below by a positive constant $c_q$ on a subdomain $\Omega \subset M \times \mathbb{S}^1$ satisfying the control property for the operator $L_\cA$, then the control condition $(CC)_q$ from the introduction is satisfied with $C= \frac{1}{c_q}$.\\
(ii) Clearly, if a subdomain $\omega \subset M$ has the {\em control property} for the operator    $\cA$ in
$M$, then it also has the control property for any operator of the form $P(\cA)$ in $M$, where $P$  is
polynomial with real coefficients which is strictly increasing on the spectrum $\sigma(\cA)$. Indeed, in this case, $P(\cA)$ also satisfies assumption $(A)$ from the introduction, and the eigenfunctions of $P(\cA)$ are
precisely the eigenfunctions of $\cA$. We denote the set of polynomials with this property by $\mathcal P_{\cA}$. }
\end{rem}

The following observation relates the control properties for the operators $\cA$ and $L_{\cA}$.  We recall
that, with the notation from the introduction, every function $u \in \ker(L_\cA)$ can be written as  
\begin{equation}\label{eq:tildeu}
  u(x,t) = \sum_{\nu_k=l^2} \alpha_{kl} \zeta_k(x)e_l(t)
  \quad\text{where}\quad \sum_{\nu_k=l^2} |\alpha_{kl}|^2 < \infty.
\end{equation}

\begin{lem}
\label{control-condition-implication}
  If $\omega \subset M$ satisfies the control property for the operator $\cA$,
then the set $\Omega = \omega \times \mathbb{S}^1$ satisfies the control property for the operator $L_{\cA}$.
\end{lem}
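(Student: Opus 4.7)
My plan is to reduce the space-time control estimate to the spatial one via a clean Fourier expansion in time. Given $u \in \ker(L_\cA)$, I would regroup the expansion~\eqref{eq:tildeu} according to the time-Fourier index, writing
$$
u(x,t) = \sum_{l \in \Z} u_l(x)\, e_l(t), \qquad u_l(x) := \sum_{k \,:\, \nu_k = l^2} \alpha_{kl}\, \zeta_k(x).
$$
The essential structural observation is that assumption (A) (compact resolvent) forces every eigenspace of $\cA$ to be finite-dimensional, so the inner sum defining $u_l$ is actually finite. Consequently, $u_l \in D(\cA)$ and, provided $u_l \not\equiv 0$, it is a genuine eigenfunction of $\cA$ with eigenvalue $l^2$ rather than merely a limit of such eigenfunctions.

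Next I would invoke Parseval's identity in the orthonormal basis $\{e_l : l \in \Z\}$ of $L^2(\mathbb{S}^1)$. Integrating first in $t$ and then over $M$, resp.\ over $\omega$, yields the two identities
$$
\|u\|_{L^2(M \times \mathbb{S}^1)}^2 = \sum_{l \in \Z} \|u_l\|_{L^2(M)}^2, \qquad \|u\|_{L^2(\omega \times \mathbb{S}^1)}^2 = \sum_{l \in \Z} \|u_l\|_{L^2(\omega)}^2.
$$
Applying the spatial control property \eqref{eq:cp-space} to each nonzero $u_l$ (and trivially to the vanishing ones) gives $\|u_l\|_{L^2(M)}^2 \le C^2 \|u_l\|_{L^2(\omega)}^2$ with a constant $C$ independent of $l$, and summing these inequalities over $l \in \Z$ delivers
$$
\|u\|_{L^2(M \times \mathbb{S}^1)}^2 \le C^2 \|u\|_{L^2(\omega \times \mathbb{S}^1)}^2,
$$
which is the claimed control property for $L_\cA$ on $\Omega = \omega \times \mathbb{S}^1$, with the same constant $C$.

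The argument is short and I do not foresee a substantial obstacle. The only point deserving care is the verification that each $u_l$ lies in the (spatial) eigenspace and not just in its closure, so that \eqref{eq:cp-space} is genuinely applicable; this is precisely where the finite multiplicity of eigenvalues implied by (A) is used. Everything else is a routine separation of variables via Parseval.
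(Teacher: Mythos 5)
Your proof is correct and takes essentially the same route as the paper: decompose $u\in\ker(L_\cA)$ into its time--Fourier modes, apply the spatial control estimate mode by mode, and sum via Parseval. There is, however, one genuinely useful difference in how you deploy the spatial estimate. The paper's displayed computation applies the control property~\eqref{eq:cp-space} to each individual basis eigenfunction $\zeta_k$, producing the bound $\|u\|_{L^2(M\times\bS^1)}^2 \les \sum_{\nu_k=l^2}|\alpha_{kl}|^2\|\zeta_k\|_{L^2(\omega)}^2$, and then rewrites this last sum as $\|u\|_{L^2(\Omega)}^2$. But that rewriting silently assumes that the $\zeta_k$ belonging to a common eigenvalue $\nu_k=l^2$ remain orthogonal in $L^2(\omega)$; when the eigenvalue has multiplicity $\ge 2$ (which happens already for $\cA=(-\Delta)^m$ on $\bS^1$), the $L^2(\omega)$ expansion of $u_l=\sum_{\nu_k=l^2}\alpha_{kl}\zeta_k$ carries cross terms that the termwise estimate does not see. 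You avoid this entirely by applying \eqref{eq:cp-space} to the full mode $u_l$, which (being a finite linear combination of eigenfunctions to the single eigenvalue $l^2$, thanks to the finite multiplicity guaranteed by the compact resolvent in assumption~(A)) is itself an eigenfunction of $\cA$ whenever it is nonzero. This is the correct and robust way to run the argument, and your explicit remark about $u_l$ being a genuine eigenfunction rather than a limit of eigenfunctions is exactly the point to make.
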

\begin{proof}
{Consider a function $u \in \ker(L_\cA)$ as in \eqref{eq:tildeu}.}   By orthogonality of the
functions $e_l$ in $L^2(\mathbb{S}^1)$, Fubini's theorem and the assumed control property of $\omega \subset M$ for the operator $\cA$, we then have
\begin{align*}
\|{u}\|_{L^2(M \times \mathbb{S}^1)}^2 
&= \sum_{\nu_k=l^2}  |\alpha_{kl}|^2 \|\zeta_k\|_{L^2(M)}^2 
\les  \sum_{\nu_k=l^2}  |\alpha_{kl}|^2 \|\zeta_k\|_{L^2(\omega)}^2 \\
&=  \int_{\omega}\Bigl(\sum_{\nu_k=l^2}  |\alpha_{kl}|^2 |\zeta_k(x)|^2\Bigr)\,dx 
=  \int_{\omega} \Bigl( \sum_{\nu_k=l^2}\sum_{\nu_i=j^2} \alpha_{kl}\zeta_k(x)
\overline{\alpha_{ij} \zeta_i(x)}\int_{\mathbb{S}^1}e_{l}(t) \overline{e_j(t)}dt \Bigr)\,dx\\
&=  \int_{\omega \times \mathbb{S}^1}\Bigl(\sum_{\nu_k=l^2}  \alpha_{kl} e_l(t)\zeta_k
(x)\Bigr)\overline{\Bigl(\sum_{\nu_i = j^2}  \alpha_{ij}
e_j(t)\zeta_i (x)\Bigr)}\,dx\,dt
=  \|{u}\|_{L^2(\Omega)}^2.   
\end{align*}
This proves the control property of $\Omega$ for the operator $L_{\cA}$.
\end{proof}

In the following, we consider the special case where $(M,g)$ is a closed Riemannian manifold and $\cA: D(\cA)
\subset L^2(M) \to L^2(M)$ is a selfadjoint partial differential operator on $M$ with compact resolvent.
As before, we consider the self adjoint (unbounded) generalized wave operator $L_\cA:= \cA + \partial_{tt}$
in the Hilbert space $L^2(M \times \mathbb{S}^1)$.  In this setting, we have the following
theorem, which follows by combining a result of Bardos, Lebeau and Rauch \cite{bardos-lebeau-rauch-1992} with
an argument of Komornik \cite{komornik-1992}.

\begin{thm}
  \label{closed-geodesic}
Let $\omega \subset M$ be an open set with the property that there exists $T>0$ such that every geodesic in
$M$ parametrized by arclength on the interval $[0,T]$ meets $\omega$.  Then $\omega$ satisfies the control
property for the operator $\cA = -\Delta$ in $M$.
\end{thm}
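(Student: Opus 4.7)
The plan is to reduce the eigenfunction bound \eqref{eq:cp-space} to a BLR-type observability estimate for the wave equation, following the well-known strategy of \cite{komornik-1992}. By the Bardos--Lebeau--Rauch theorem \cite{bardos-lebeau-rauch-1992}, the geometric control condition assumed on $\omega$ provides $T' > 0$ (which may be chosen $>T$) and a constant $C>0$ such that every finite-energy solution $w$ of the free wave equation $\partial_{tt} w - \Delta w = 0$ on $M \times \R$ satisfies the observability inequality
\[
\int_M \bigl(|\nabla w(\cdot,0)|^2 + |\partial_t w(\cdot,0)|^2\bigr)\,dx \;\le\; C \int_0^{T'}\!\!\int_\omega |\partial_t w|^2\,dx\,dt.
\]

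Next, I would feed Laplace eigenfunctions into this inequality via a time-harmonic ansatz. For any nonzero eigenfunction $\phi \in D(\cA)$ with $-\Delta \phi = \lambda \phi$ and $\lambda > 0$, the function
\[
w(x,t) := \tfrac{\sin(\sqrt{\lambda}\,t)}{\sqrt{\lambda}}\,\phi(x)
\]
solves the free wave equation with initial data $w(\cdot,0)=0$ and $\partial_t w(\cdot,0)=\phi$. Substituting this into the observability inequality, using $\partial_t w(x,t) = \cos(\sqrt{\lambda}\,t)\phi(x)$ together with the crude bound $\int_0^{T'}\cos^2(\sqrt{\lambda}\,t)\,dt \le T'$, I obtain
\[
\|\phi\|_{L^2(M)}^2 \;\le\; CT'\,\|\phi\|_{L^2(\omega)}^2,
\]
with $CT'$ independent of $\lambda$. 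The remaining case $\lambda = 0$ is immediate: on each connected component of the closed manifold $M$ the eigenfunctions for the eigenvalue $0$ are constants, and GCC forces $\omega$ to meet every component (since geodesics can wind arbitrarily long inside each component), so \eqref{eq:cp-space} is trivial with constant $\sqrt{\mu(M)/\mu(\omega)}$.

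The main obstacle is really just the invocation of BLR in a form uniform across all frequencies; once that is in hand, the reduction to the eigenfunction estimate is an essentially cosmetic computation. I note that the key subtlety in \cite{bardos-lebeau-rauch-1992} is the microlocal propagation argument that converts a purely geometric hypothesis on geodesics into a quantitative analytic inequality; this is the deep input we rely on. An alternative route through semiclassical defect measures would yield the same conclusion but is less direct than the above two-step reduction.
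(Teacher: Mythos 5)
Your proposal is correct and follows essentially the same strategy as the paper: invoke the Bardos--Lebeau--Rauch observability estimate under geometric control, then extract the eigenfunction bound by plugging in a time-harmonic solution of the wave equation. The only (cosmetic) differences are that the paper works with the $L^2$-observability form $\|u(0,\cdot)\|_{L^2(M)}^2 \le C_T \int_0^T \|u(t,\cdot)\|_{L^2(\omega)}^2\,dt$ and the complex-exponential ansatz $u(t,x)=e^{i\sqrt{\lambda}t}v(x)$, which degenerates gracefully at $\lambda=0$ and hence avoids the separate treatment of the zero eigenvalue that your sine-based ansatz requires.
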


\begin{proof}
Let $T>0$ be chosen as in the assumption. By the main result in \cite{bardos-lebeau-rauch-1992} of Bardos, Lebeau and Rauch and the Hilbert
uniqueness method, we know that for every classical (complex-valued) solution $u$ of the equation
$\partial_{tt}u-\Delta u = 0$ we have the estimate
\begin{equation}
  \label{eq:BLR-estimate}
\|u(0,\cdot)\|_{L^2(M)}^2 \le C_T \int_0^T \|u(t,\cdot)\|_{L^2(\omega)}^2\,dt
\end{equation}
with a constant $C_T>0$ which does not depend on $u$. 
So if $v$ is an eigenfunction of $-\Delta$ in $M$ corresponding to the eigenvalue $\lambda$, we may apply (\ref{eq:BLR-estimate}) to the function $(t,x) \mapsto u(t,x)= e^{i\sqrt{\lambda}t}v(x)$ which is a classical solution of $\partial_{tt}u-\Delta u = 0$ on $[0,T]$. This gives
$$
\|v\|_{L^2(M)}^2 \le T C_T \|v\|_{L^2(\omega)}^2,
$$
so (\ref{eq:cp-space}) holds with $C = \sqrt{T C_T}$. 
\end{proof}

\begin{cor}
  \label{cor-CC-sphere}
  Let $M= \mathbb{S}^N$, let $\cA = P(-\Delta)$ for some $P\in\mathcal P_{\cA}$, and let $\omega \subset
  \mathbb{S}^N$ be an open subset which intersects every great circle on $\mathbb{S}^N$. 
  Then $\omega$ has the control property for the operator  $\cA$.  
\end{cor}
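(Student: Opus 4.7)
The plan is to reduce the corollary to Theorem~\ref{closed-geodesic} applied to the standard Laplace--Beltrami operator on the sphere, and then transfer the control property to $\cA = P(-\Delta)$ via Remark~\ref{rem-control-space-time}(ii). Since $P \in \mathcal{P}_{\cA}$ is strictly increasing on $\sigma(-\Delta) \subset [0,\infty)$, the eigenspaces of $P(-\Delta)$ coincide with those of $-\Delta$ (the $\lambda$-eigenspace of $-\Delta$ equals the $P(\lambda)$-eigenspace of $P(-\Delta)$). Consequently, the estimate~\eqref{eq:cp-space} for $\cA=-\Delta$ immediately yields the analogous estimate for $\cA = P(-\Delta)$ with the same constant, so it suffices to treat the Laplacian case.

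To apply Theorem~\ref{closed-geodesic}, I need to exhibit a $T>0$ such that every arclength-parametrized geodesic $\gamma:[0,T]\to \mathbb{S}^N$ meets $\omega$. This is the geometric heart of the argument: on the round sphere $\mathbb{S}^N$ of radius $1$, every geodesic parametrized by arclength is a great circle traversed with period $2\pi$. Thus for any $T \ge 2\pi$, the image $\gamma([0,T])$ contains the full great circle carrying $\gamma$. By hypothesis, $\omega$ meets every great circle of $\mathbb{S}^N$, so $\gamma([0,T]) \cap \omega \neq \varnothing$ for every such $\gamma$. Theorem~\ref{closed-geodesic} therefore applies with $T = 2\pi$ and yields the control property of $\omega$ for $-\Delta$.

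Combining the two steps completes the proof. I do not expect any genuine obstacle; the only points that deserve a brief justification are (a) that the reduction via Remark~\ref{rem-control-space-time}(ii) is legitimate because the polynomial $P$ being strictly increasing on $\sigma(-\Delta)$ preserves eigenspaces, and (b) that the geometric hypothesis on $\omega$ translates exactly into the geodesic condition of Theorem~\ref{closed-geodesic}, since geodesics on $\mathbb{S}^N$ are nothing but great circles up to reparametrization.
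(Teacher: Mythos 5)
Your proof is correct and follows essentially the same route as the paper: both reduce to the Laplacian via Remark~\ref{rem-control-space-time}(ii), observe that arclength geodesics on the round sphere traverse full great circles within time $2\pi$, and then invoke Theorem~\ref{closed-geodesic} with $T=2\pi$. Nothing is missing and no further comparison is needed.
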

\begin{proof}
  Since every geodesic of length greater than or equal to $2\pi$ on $\mathbb{S}^N$ contains a full great circle, the set $\omega$ satisfies the assumptions of Theorem~\ref{closed-geodesic} with $T= 2\pi$.
  Then, by Theorem~\ref{closed-geodesic}, the set $\omega$ satisfies the control property for the operator
  $-\Delta$ in $M$. By Remark~\ref{rem-control-space-time}, it then also has the control property for $\cA=
  P(-\Delta)$.
\end{proof}

We collect the following further results related to the flat torus $M=\mathbb T^N$. 

\begin{thm} \label{thm-komornik}  ~
  \begin{itemize}
  \item[(i)] Every nonempty open subset $\Omega \subset \mathbb T^N \times \mathbb{S}^1$ has the control
  property for the operator $L_{\cA}$ with $\cA= (-\Delta)^2$.
  \item[(ii)] Every nonempty open subset $\Omega \subset \mathbb T^1 \times \mathbb{S}^1$ has the control
  property for the operator $L_{\cA}$ with $\cA= (-\Delta)^m$ for some $m \ge 2$.
  \item[(iii)] Every nonempty open subset $\omega \subset \mathbb T^N$ has the control property for the
  operator $\cA=P(-\Delta)$ whenever $P\in\mathcal P_{-\Delta}$.  As a consequence, $\omega\times\bS^1$
  has the control property for $L_{\cA}$.
  \end{itemize}  
\end{thm}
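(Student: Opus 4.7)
The approach reduces each claim to a known observability estimate for Schrödinger-type equations on the torus.

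Part (iii) is the easiest. Since $P\in\mathcal P_{-\Delta}$ is strictly increasing on $\sigma(-\Delta)$, the eigenfunctions of $\cA=P(-\Delta)$ coincide with those of $-\Delta$, so by Remark~\ref{rem-control-space-time}(ii) it suffices to verify the control property of $\omega$ for $-\Delta$ itself. Given any eigenfunction $\phi$ with $-\Delta\phi=\lambda\phi$, the function $v(x,t):=e^{i\lambda t}\phi(x)$ solves the Schrödinger equation $i\partial_t v=-\Delta v$ on $\T^N$. Jaffard's observability theorem for the Schrödinger equation on tori asserts that, for any nonempty open $\omega\subset\T^N$ and any $T>0$, there is a constant $C=C(\omega,T)$ such that $\|v(\cdot,0)\|_{L^2(\T^N)}^2\le C\int_0^T\|v(\cdot,t)\|_{L^2(\omega)}^2\,dt$. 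Using $|e^{i\lambda t}|=1$, this collapses to $\|\phi\|_{L^2(\T^N)}^2\le CT\|\phi\|_{L^2(\omega)}^2$. The control property of $\omega\times\bS^1$ for $L_{\cA}$ is then an immediate consequence of Lemma~\ref{control-condition-implication}.

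For (i) and (ii), every $u\in\ker(L_{\cA})$ with $\cA=(-\Delta)^m$ admits a Fourier splitting $u=v+w$ where $v=\sum_{k} a_k e^{i(k\cdot x+|k|^m t)}$ and $w=\sum_{k} b_k e^{i(k\cdot x-|k|^m t)}$, whose frequency supports $\{(k,+|k|^m)\}$ and $\{(k,-|k|^m)\}$ in $\Z^N\times\Z$ are disjoint except at $k=0$. Hence $\|u\|_{L^2(\T^N\times\bS^1)}^2=\|v\|^2+\|w\|^2$ up to the constant mode. Given any nonempty open $\Omega$, pick a product subset $\omega\times I\subset\Omega$. In case (i), with $m=2$, the components $v$, $w$ satisfy $iv_t=-\Delta v$ and $iw_t=\Delta w$, so Jaffard's theorem applied on the time interval $I$, together with the time-isometry of the Schrödinger flow, yields $\|v\|_{L^2(\T^N\times\bS^1)}^2\les \|v\|_{L^2(\omega\times I)}^2$ and the analogous bound for $w$. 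In case (ii), with $M=\bS^1$ and $m\ge 2$, the frequencies $(k,\pm k^m)\in\Z^2$ have second-coordinate gaps $\sim m|k|^{m-1}\to\infty$, enabling a vector-valued Ingham-Komornik inequality on any rectangle $I_1\times I_2\subset\Omega$ once finitely many low-frequency modes are split off and handled by a finite-dimensional compactness argument.

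The main obstacle in parts (i) and (ii) is then to combine the separate bounds for $v$ and $w$ into a single estimate by $\|u\|_{L^2(\omega\times I)}^2=\|v+w\|_{L^2(\omega\times I)}^2$, since the orthogonality of $v$ and $w$ that holds on the full product is lost on a proper subset. One way to resolve this is to apply a vector-valued Ingham inequality directly to the combined frequency set $\{(k,\pm|k|^m):k\in\Z^N\}$, which inherits the required gap structure for the ranges of $m$ considered; this yields observability for the generalized beam equation $u_{tt}+(-\Delta)^m u=0$ on the torus in a single step. Alternatively, one can use a Haraux-type decoupling based on the time-reversal symmetry $(x,t)\mapsto(x,-t)$, which exchanges the roles of $v$ and $w$, together with a covering argument for $\Omega$. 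I expect this decoupling step to be the most delicate part of the proof.
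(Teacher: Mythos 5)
Your argument for part (iii) is correct but follows a genuinely different route from the paper. You appeal directly to the observability estimate for the Schr\"odinger equation $i\partial_t v = -\Delta v$ on $\T^N$ from an arbitrary nonempty open set and in arbitrary time (the Jaffard--Komornik theorem), and apply it to the standing-wave solution $v(x,t)=e^{i\lambda t}\phi(x)$ associated with an eigenfunction $\phi$; Lemma~\ref{control-condition-implication} then lifts the result to $L_\cA$. The paper instead derives (iii) as a \emph{corollary of part (i)}: given an eigenfunction $v=\sum_{\beta\in B_\lambda}c_\beta e^{i\beta\cdot x}$ of $-\Delta$ with eigenvalue $\lambda$, one forms the function $u(x,t)=\sum_{\beta\in B_\lambda}c_\beta e^{i(\beta,\lambda)\cdot(x,t)}$, which lies in $\ker(\partial_{tt}+\Delta^2)$, and applies the beam-equation estimate on $\omega\times\bS^1$. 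Both arguments are valid; yours is more self-contained if the Schr\"odinger observability is taken as a black box, while the paper's keeps all the analysis within the single reference \cite{komornik-1992}.

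For parts (i) and (ii) there is a genuine gap, and you have in fact located it yourself: the decoupling of the forward and backward Schr\"odinger components $v$ and $w$ of $u\in\ker(L_\cA)$ is precisely the hard step, and your proposal leaves it unresolved. Worse, the first of your two suggested fixes does not actually work for part (i) with $N\geq 2$: the combined frequency set $A=\{(k,\pm|k|^2):k\in\Z^N\}\subset\Z^{N+1}$ does \emph{not} inherit a gap structure of the kind needed for a na\"ive vector-valued Ingham inequality. Already in $A_+$ the points $(3,4,25)$ and $(4,3,25)$ (for $N=2$) are at distance $\sqrt2$, and since the number of lattice points on a sphere $|k|^2=\lambda$ is unbounded, the internal gaps of $A_+$ do not improve when one discards low-frequency modes, so the criterion $K(A)=0$ does not follow by the direct argument. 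This is why the paper does not attempt a from-scratch proof of (i) but instead cites the observability estimate established in \cite[p.~335]{komornik-1992} for exactly this class of exponential sums. For part (ii) the spatial dimension is $N=1$ and your gap intuition is correct, but you have not turned it into a proof. The paper does so using Komornik's quantitative criterion: one shows $K(A)\le K(A_+)+K(A_-)=2K(A_+)$ by symmetry, which eliminates the cross-terms between $v$ and $w$ at the level of the frequency set, and then computes $d(A_+^n)\to\infty$ for the tail sets $A_+^n=\{(d,|d|^m):|d|\geq n\}$, using $(n+1)^m-n^m\geq mn$ when $|d|\neq|e|$ and $|d-e|\geq 2n$ when $d=-e$. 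This requires $m\geq 2$ and is the elementary but essential computation your proposal omits.
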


Parts (i) and (iii) are due to Komornik \cite{komornik-1992}, although the results are stated in a somewhat different form in \cite{komornik-1992}. Part (ii) follows by similar arguments as in \cite{komornik-1992}. For the convenience of the reader, we include the proof of these facts here. 

\begin{proof}
(i) Let $\Omega \subset \mathbb T^N \times \mathbb{S}^1$ be an arbitrary nonempty open subset. In \cite[p.
335]{komornik-1992}, Komornik proves the estimate
  \begin{equation*}
  \|u\|_{L^2(\T^N \times \mathbb{S}^1)} \les \|u\|_{L^2(\Omega)} 
  \end{equation*}
for every function of the form
$$
  u(x,t) = \sum_{\alpha \in A}b_\alpha e^{i \alpha \cdot (x,t)}, \qquad \text{with}\; b_\alpha \in \C,\quad
  \sum_{\alpha \in A}|b_\alpha|^2 < \infty, 
$$
where
 $$
    A:= A_+ \cup A_- \qquad \text{with}\quad A_\pm := \Bigl\{ \bigl(d_1,\dots,d_N, \pm \sum_{j=1}^N d_j^2\bigr)\::\: d \in \Z^N\Bigr\} \; \subset \Z^{N+1}.
 $$
These functions $u$ are precisely the functions in $\ker(L_{\cA})= \ker \bigl(\partial_{tt}+ (-\Delta)^2)\bigr)$, so (i) follows.  \\
(ii) Suppose that $N=1$, i.e. $\T^N = \mathbb{S}^1$, and $\cA = (-\Delta)^m$ for some $m \ge 2$.
It suffices to prove the estimate
    \begin{equation}
    \label{eq-cp-space-time-komornik-extra}
  \|u\|_{L^2(\T^N \times \mathbb{S}^1)} \les \|u\|_{L^2(\Omega)} 
  \end{equation}
  for every function  
  \begin{equation}
    \label{eq:kernel-function-S-1}
u(x,t) = \sum_{\alpha \in A}b_\alpha e^{i \alpha \cdot (x,t)}, \qquad \text{with}\; b_\alpha \in \C,\quad \sum_{\alpha \in A}|b_\alpha|^2 < \infty,
  \end{equation}
where   
    $$
    A:= A_+ \cup A_- \qquad \text{with}\quad A_{\pm}:= \Bigl\{ \bigl(d, \pm |d|^m \bigr)\::\: d \in \Z
    \Bigr\} \;\subset \: \Z^2.
    $$
  As detailed in \cite{komornik-1992}, \eqref{eq-cp-space-time-komornik-extra} holds for such functions if
  $K(A)= 0$ where 
  $$
  K(A) = \inf \Bigl\{ \sum_{j=1}^k \frac{1}{d(A_j)} \::\: k \in \N,\: A_1,\dots,A_k \subset A,\: \bigcup_{j=1}^k A_j = A \Bigr\}
  $$
  and 
  $$
  d(B):= \inf \{|a-a'|\::\: a,a' \in B,\: a \not = a'\} \qquad \text{for a subset }B \subset \R^2.
  $$
  Note that $d(B)= \infty$ and therefore $\frac{1}{d(B)} = 0$ if $B$ consists of no more than one element.
  Therefore the value $K(A)$ does not change if finitely many points are removed or added to $A$. Moreover, by symmetry we have 
  $K(A_+)= K(A_-)$ and therefore
  $$
    K(A)\le K(A_+)+ K(A_-)= 2 K(A_+).
  $$
 Since, for every $n \in \N$, $A$ differs from $A^n_+:= \{\bigl(d, |d|^m \bigr) \in A \::\: |d| \ge n\}$ just
by finitely many points and therefore $K(A)=K(A^n)\leq 2K(A^n_+) \leq \frac{2}{d(A^n_+)}$, it suffices to show
that
  \begin{equation}
    \label{eq:sufficient-internal-distance}
    \lim_{n \to \infty} d(A^n_+)= \infty. 
  \end{equation}
  To see this, we note that for every two different points $(d,d^m), (e,e^m) \in A^n_+$ two cases may occur.
  If $|d| \not = |e|$, we have, since $m \ge 2$,  
  $$
|(d,|d|^m)-(e,|e|^m)| \ge \Bigl| |d|^m-|e|^m \Bigr| 
\ge (n+1)^m -n^m = \sum_{k=0}^{m-1}{m \choose k} n^k \ge m n
  $$
On the other hand, if $|d|=|e|$, we must have $d = -e$ and therefore
  $$
  |(d,d^m)-(e,e^m)| \ge |d-e| \ge 2n.
  $$
We thus conclude that (\ref{eq:sufficient-internal-distance}) and therefore (\ref{eq-cp-space-time-komornik-extra}) holds, and thus the proof of (ii) is finished.\\
To deduce (iii) for a nonempty open subset $\omega \subset M$, we note that every eigenfunction of $-\Delta$
on $\mathbb T^N$ with eigenvalue $\lambda \ge 0$ is of the form   
$$
    v(x) = \sum_{\beta \in B_\lambda} c_\beta e^{i \beta \cdot x} \qquad \text{with}\quad 
    B_\lambda:= \{\beta \in \Z^N \::\: \sum_{j=1}^N \beta_j^2 = \lambda\}
    $$
    and $c_\beta \in \C$ for $\beta \in B_\lambda$ with  $\sum_{\beta\in B_\lambda} |c_\beta|^2<\infty$.
    For such $v$, the function 
    $$
  u \in L^2(\T^N \times \mathbb{S}^1), \qquad 
  u(x,t) = \sum_{\beta \in B_\lambda}c_\beta e^{i \alpha_\beta \cdot (x,t)} \qquad \text{with }\:\alpha_\beta
 = (\beta, \lambda) \text{ for }\beta \in B_\lambda 
$$
belongs to $\ker(L_\cA u)$ for $\cA = (-\Delta)^2$. Applying (i) with $\Omega = \omega \times \mathbb{S}^1$
thus gives
\begin{align*}
  \|v\|_{L^2(M)}^2 
  &\sim \sum_{\beta \in B_\lambda} |c_\beta|^2 
  \sim \|u\|_{L^2(\T^N \times \mathbb{S}^1)}^2 
  \les \|u\|_{L^2(\Omega)}^2
   =\sum_{\beta,\beta' \in B_\lambda}c_\beta \overline{c_{\beta'}} \int_{\Omega}e^{i (\alpha_\beta-\alpha_{\beta'}) \cdot (x,t)}d(x,t)\\
  &=\sum_{\beta,\beta' \in B_\lambda}c_\beta \overline{c_{\beta'}} \int_{\omega}e^{i (\beta-\beta') \cdot x}
  \,dx \int_{\mathbb{S}^1} e^{i(\lambda^2-\lambda^2)t} dt =2 \pi \int_{\omega} \Bigl|\sum_{\beta\in B_\lambda} c_\beta
  e^{i \beta \cdot x} \Bigr|^2\,dx = 2\pi \|v\|_{L^2(\omega)}^2,
\end{align*}
so (\ref{eq:cp-space}) holds for $v$ with a constant independent of $v$. 
This proves the control property for $-\Delta$ on $\mathbb T^N$ and hence, by
Remark~\ref{rem-control-space-time}, the control property for $P(-\Delta)$ for $P\in\mathcal P_{-\Delta}$. So
Lemma~\ref{control-condition-implication} yields the control property of
$\omega\times\bS^1$ for $L_{\cA} = \partial_{tt}+P(-\Delta)$, which  
finishes the proof of (iii).
\end{proof}

\begin{cor}
  \label{cor-summary-CC-q}
  Let $(M,g)$ be a closed Riemannian manifold of dimension $N \ge 1$, and let $q \in L^\infty(M \times \mathbb{S}^1)$
  be a nonnegative function with $q>0$ on some nonempty open subset $\Omega \subset M \times \mathbb{S}^1$. Then the
  control condition $(CC)_q$ from the introduction is satisfied in the following cases:
  \begin{itemize}
  \item[(i)] $M = \mathbb{S}^N$, $\cA = P(-\Delta)$ for $P\in\mathcal P_{-\Delta}$ and $\Omega = \omega
  \times \mathbb{S}^1$ for some open subset $\omega \subset \mathbb{S}^N$ which intersects every great circle
  on $\mathbb{S}^N$.
  \item[(ii)] $M = \T^N$ and $\cA = (-\Delta)^2$.
  \item[(iii)] $M = \mathbb{S}^1$ and $\cA = (-\Delta)^m$ for $m \ge 2$.
  \item[(iv)] $M= \T^N$, $\cA=P(-\Delta)$ for $P\in\mathcal P_{-\Delta}$  and $\Omega = \omega \times
  \mathbb{S}^1$ for some nonempty open subset $\omega \subset \T^N$.
  \end{itemize}
\end{cor}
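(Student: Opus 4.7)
The unifying strategy for all four cases is to combine Remark~\ref{rem-control-space-time}(i) with the control-property results established earlier in this section. Concretely, in each case I would identify a subdomain $\Omega \subset M \times \mathbb{S}^1$ on which $q$ is bounded below by a positive constant $c_q := \inf_\Omega q > 0$, verify that this $\Omega$ has the control property for $L_{\cA}$, and then Remark~\ref{rem-control-space-time}(i) yields $(CC)_q$ with constant $C = 1/c_q$. All the serious analytic work has already been done in the preceding lemmas and theorems; the corollary is essentially a bookkeeping statement that assembles them.

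For case (i) the input is Corollary~\ref{cor-CC-sphere}, which says that any open $\omega \subset \mathbb{S}^N$ meeting every great circle has the control property for $\cA = P(-\Delta)$. I would then invoke Lemma~\ref{control-condition-implication} to lift this from $\omega$ to $\Omega = \omega \times \mathbb{S}^1$ as a subdomain on which $L_{\cA}$ is controlled. Case (ii) is immediate from Theorem~\ref{thm-komornik}(i), which gives the control property for $L_{\cA}$ on any nonempty open $\Omega \subset \T^N \times \mathbb{S}^1$ when $\cA = (-\Delta)^2$. Case (iii) is settled analogously by Theorem~\ref{thm-komornik}(ii), valid for $M = \mathbb{S}^1$ and $\cA = (-\Delta)^m$ with $m\geq 2$. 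Case (iv) follows from the second assertion of Theorem~\ref{thm-komornik}(iii), which states that $\omega \times \mathbb{S}^1$ has the control property for $L_{\cA} = \partial_{tt} + P(-\Delta)$ whenever $\omega \subset \T^N$ is a nonempty open set and $P \in \mathcal{P}_{-\Delta}$.

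Because the hard analytic steps (the Bardos--Lebeau--Rauch geometric control condition behind Theorem~\ref{closed-geodesic} for case (i), and Komornik's Ingham-type gap estimates for cases (ii)--(iv)) are already packaged in the preceding results, there is no genuine obstacle to overcome here. The only point that deserves mild care is reading the hypothesis ``$q>0$ on some nonempty open subset $\Omega$'' as meaning $\inf_\Omega q > 0$, which is precisely the form of the assumption in every application in the introduction, cf.~\eqref{eq:q-con-wave-1+1}, \eqref{eq:q-con-S1}, \eqref{eq:q-con-TN}, \eqref{eq:q-con-SN-intro}. With that reading the constant $c_q$ above is strictly positive and the conclusion $(CC)_q$ follows uniformly in all four cases.
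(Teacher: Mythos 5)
Your proof is correct and follows essentially the same route as the paper: reduce via Remark~\ref{rem-control-space-time}(i) to checking the control property for $L_{\cA}$, then cite Corollary~\ref{cor-CC-sphere} plus Lemma~\ref{control-condition-implication} for (i) and Theorem~\ref{thm-komornik} for (ii)--(iv). The only cosmetic difference is in case (iv), where you invoke the second assertion of Theorem~\ref{thm-komornik}(iii) directly, while the paper re-cites the ingredients of that assertion's proof (Theorem~\ref{thm-komornik}(i), Remark~\ref{rem-control-space-time}(ii), Lemma~\ref{control-condition-implication}); your citation is the cleaner of the two, and your remark about reading the hypothesis as $\inf_\Omega q>0$ is an appropriate clarification consistent with how the condition is used throughout the paper.
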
 
\begin{proof}
  By Remark~\ref{rem-control-space-time}(i), it suffices to show that, in each of the cases $(i)-(iv)$, the
  open subset $\Omega$ has the control property for the operator $L_{\cA}$. In Case $(i)$, this follows from 
   Corollary~\ref{cor-CC-sphere} and Lemma~\ref{control-condition-implication}. In Case (ii), this follows
   from Theorem~\ref{thm-komornik}(i) thanks to $\T^1=\mathbb{S}^1$, and in Case (iii) it follows directly from
   Theorem~\ref{thm-komornik}(ii). Finally, Case (iv) is covered by Theorem~\ref{thm-komornik}(iii). 
\end{proof}

For the periodic wave equation on $\mathbb{S}^1\times \mathbb{S}^1$ we get a finer result. We consider functions $u:\mathbb{S}^1\times
\mathbb{S}^1\to\R$ as being $2\pi\Z^2$-periodic functions on $\R^2$. For a given set $\Omega\subset [0,2\pi)^2$  
define   
$$
  \tilde \Omega := \{(x,t+2\pi j) : (x,t)\in\Omega,\;j\in\{0,1\}\} \subset \R^2.
$$
So $\tilde \Omega$ is nothing but the union of $\Omega$ with its translate by $2\pi$ in the second
coordinate. For $\xi\in\R$ we define the sets 
$$  
  A_\xi:= \{x\in\R: (x,-x+\xi)\in\tilde \Omega\},\qquad
  B_\eta:= \{x\in\R: (x,x-\eta)\in\tilde \Omega\}.
$$

\begin{thm}\label{thm:CCq_for_wave1+1}
  The $q$-control property $(CC)_q$ holds for 
  $\cA = -\Delta, M= \mathbb{S}^1$ provided that $\inf_\Omega q>0$ for some subset $\Omega\subset \mathbb{S}^1\times
  \mathbb{S}^1$ such that 
  \begin{align*}
    \inf_{\xi\in [0,2\pi)} |A_\xi|>0
    \qquad\text{and}\qquad
    \inf_{\eta\in [0,2\pi)} |B_\eta|>0. 
  \end{align*}
\end{thm}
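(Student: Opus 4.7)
The plan is to reduce the $q$-control property to an observability inequality for the d'Alembert decomposition of $\ker(L_\cA)$, and then to establish this inequality via the slice hypotheses.

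\textbf{Step 1: Structure of the kernel.} For $\cA = -\Delta$ on $\mathbb{S}^1$, the eigenvalues are $k^2$ with $k\in\N_0$, so by Proposition~\ref{prop-self-adjoint-realization} the kernel $\ker(L_\cA)$ consists precisely of d'Alembert-type functions $u(x,t) = f(x+t) + g(x-t)$ with $f, g \in L^2(\mathbb{S}^1)$, the representation being unique once we normalize $\int_{\mathbb{S}^1}g = 0$. A direct Fourier computation then gives $\|u\|_{L^2(\mathbb{T}^2)}^2 = 2\pi(\|f\|_{L^2}^2 + \|g\|_{L^2}^2)$. Since $\inf_\Omega q > 0$ by hypothesis, $(CC)_q$ reduces to the observability estimate $\|u\|_{L^2(\mathbb{T}^2)}^2 \le C\int_\Omega|u|^2\,d(x,t)$ for all such $u$.

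\textbf{Step 2: Reduction to a separable bilinear form.} The $2\pi$-periodicity of $u$ in $t$ yields $\int_{\tilde\Omega}|u|^2 = 2\int_\Omega|u|^2$. Applying the diffeomorphism $(x,t)\mapsto (\xi,\eta) = (x+t,x-t)$ with Jacobian $2$, one obtains
\[
4\int_\Omega|u|^2\,d(x,t) = \iint_{\tilde\Omega^*}|f(\xi) + g(\eta)|^2\,d\xi\,d\eta,
\]
where $\tilde\Omega^*$ is the image of $\tilde\Omega$. By Fubini, the $\eta$-slice of $\tilde\Omega^*$ at height $\xi$ has length $2|A_\xi| \ge 2a := 2\inf_\xi|A_\xi|>0$, and similarly $\xi$-slices have length $\ge 2b:=2\inf_\eta|B_\eta|>0$.

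\textbf{Step 3: Diagonal terms and the cross term.} Expanding yields
\[
\iint_{\tilde\Omega^*}|f+g|^2\,d\xi\,d\eta = F + G + 2\,\mathrm{Re}\,C,
\]
with $F := \int 2|A_\xi||f(\xi)|^2\,d\xi \ge 2a\|f\|^2$, $G := \int 2|B_\eta||g(\eta)|^2\,d\eta \ge 2b\|g\|^2$, and the cross term $C := \iint_{\tilde\Omega^*}f(\xi)\overline{g(\eta)}\,d\xi\,d\eta$. The diagonal contributions are immediately coercive in $(\|f\|^2,\|g\|^2)$ via the slice bounds.

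\textbf{Step 4: The main obstacle --- cross-term control.} Cauchy--Schwarz only gives $|C|^2 \le F G$, hence $F+G-2|C| \ge (\sqrt F - \sqrt G)^2$, which vanishes when $F = G$ and is therefore insufficient. The key is a \emph{strict} bound $|C|^2 \le \alpha\, FG$ with some $\alpha = \alpha(a,b) < 1$. Once this is available, the elementary identity
\[
F+G-2\sqrt{\alpha FG} = (\sqrt F - \sqrt{\alpha G})^2 + (1-\alpha)G
\]
and its symmetric counterpart give $\int_\Omega|u|^2 \gtrsim (1-\alpha)\min(a,b)(\|f\|^2+\|g\|^2)$, completing the proof.

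\textbf{Strategy for the strict bound.} Equality in the Cauchy--Schwarz bound on $C$ requires $f(x+t) = \lambda\, g(x-t)$ on $\Omega$ for some scalar $\lambda$; the slice hypotheses --- which ensure that \emph{every} characteristic of both families meets $\Omega$ in a set of positive measure, uniformly --- force both $f$ and $g$ to be constants in this case, and the mean-zero normalization then gives $g\equiv 0$ and consequently $f\equiv 0$. The quantitative version is obtained by a compactness argument: if no such $\alpha<1$ existed, a normalized sequence $(f_n,g_n)$ would achieve $|C_n|^2/(F_nG_n)\to 1$; weak limits $(f_\ast,g_\ast)$ would realize equality, yielding the above rigidity and hence a contradiction with normalization. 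The main technical difficulty lies precisely here: making this rigidity \emph{effective} by exploiting both the $A_\xi$- and $B_\eta$-hypotheses simultaneously, together with $\int g = 0$, to rule out near-alignment of $f(x+t)$ and $g(x-t)$ on $\Omega$.
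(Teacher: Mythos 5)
Your outline follows the paper's strategy in its main moves: write $u\in\ker(L_\cA)$ in d'Alembert form $u(x,t)=\phi(x+t)+\psi(x-t)$, pass to the characteristic coordinates $(\xi,\eta)=(x+t,x-t)$, and invoke the slice hypotheses $\inf_\xi|A_\xi|>0$, $\inf_\eta|B_\eta|>0$. The decisive difference is at the step you flag yourself: how to compare $\int_{\tilde\Omega}|u|^2$ with the ``diagonal'' quantity $\int_{\tilde\Omega}\bigl(|\phi(x+t)|^2+|\psi(x-t)|^2\bigr)$. The paper replaces $\Omega$ by the doubled set $\tilde\Omega$ and then passes directly to the lower bound $\|u\|_{L^2(\tilde\Omega)}^2\ge\frac12\int_{\tilde\Omega}(|\phi|^2+|\psi|^2)$, after which the slice conditions immediately give coercivity in $\|\phi\|_{L^2}^2+\|\psi\|_{L^2}^2$; there is no compactness argument anywhere. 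Your proposal, by contrast, expands $|u|^2$ explicitly, isolates the cross term $C=\iint_{\tilde\Omega^*}f\bar g$, and tries to prove a strict Cauchy--Schwarz improvement $|C|^2\le\alpha FG$ with $\alpha<1$.

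This is where your proof has a genuine gap, and you acknowledge it yourself. The compactness/rigidity argument is only a sketch, and the steps you omit are precisely the nontrivial ones. (a) You must rule out zero weak limits: a normalized extremizing sequence could have $f_n\rightharpoonup 0$, $g_n\rightharpoonup 0$ while $F_n,G_n$ stay bounded below; the escape is that the cross functional $C$ has a Hilbert--Schmidt kernel (the indicator of a finite-measure set), hence is weakly sequentially continuous, so $C(f_*,g_*)\ne 0$ — but this has to be said. (b) Even then, lower semicontinuity only gives $F_*G_*\le\liminf(F_nG_n)$; to get the Cauchy--Schwarz equality case for $(f_*,g_*)$ you must first upgrade the weak convergence to strong, which is itself a small argument. (c) The rigidity step ``$f_*(\xi)=\lambda\,\overline{g_*(\eta)}$ a.e.\ on $\tilde\Omega^*$ implies both constant'' requires more than positive measure of every slice: one must argue that the slices overlap enough to propagate the constancy, and with periodicity and the structure of $\tilde\Omega$ this needs to be spelled out. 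As written, your proposal correctly identifies the obstacle but does not prove the quantitative bound $|C|^2\le\alpha FG$; you should either close these three steps carefully, or prove the direct integral estimate the paper uses and thereby avoid the cross term entirely.
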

\begin{proof}
  Any $u\in E^0$ is $2\pi\Z^2$-periodic and a change of coordinates reveals $u(x,t)=\phi(x+t)+\psi(x-t)$ where
  $\phi,\psi$ are $2\pi$-periodic and square integrable. Then  
  \begin{align*}
    \int_{[0,2\pi)^2} q(x,t)|u|^2\,d(x,t)     
    &\geq \int_{\Omega} q(x,t)|u|^2\,d(x,t) \\
    &\geq \inf_{\Omega} q\cdot \|u\|_{L^2(\Omega)}^2  \\
    &=  \frac{1}{2} \inf_\Omega q \cdot \|u\|_{L^2(\tilde\Omega)}^2 \\
    &\geq   \frac{1}{4}\inf_\Omega q \cdot \int_{\tilde\Omega} |\phi(x+t)|^2+|\psi(x-t)|^2
    \,d(x,t) 
    \\
    &=   \frac{1}{4}\inf_\Omega q \cdot 
    \left(\int_{\R} \Big(\int_{A_\xi} 1 \,dx \Big)\cdot |\phi(\xi)|^2 \,d\xi  +
     \int_{\R} \Big(\int_{B_\eta} 1 \,dx \Big)\cdot |\psi(\eta)|^2 \,d\eta \right)
     \\
    &\geq   \frac{c}{4}\inf_\Omega q \cdot   \left(
    \int_0^{2\pi} |\phi(\xi)|^2\,d\xi + \int_0^{2\pi}|\psi(\eta)|^2\,d\eta  \right) 
    \\
    &=   \frac{c}{32\pi}\inf_\Omega q \cdot   \left(
    \int_0^{4\pi} \int_{-2\pi}^{2\pi} |\phi(\xi)|^2+ |\psi(\eta)|^2 \,d\xi \,d\eta  \right) 
    \\     
    &\geq c' \inf_\Omega q \cdot \int_{[0,2\pi)^2} |u(x,t)|^2 \,d(x,t). 
  \end{align*} 
  Here the last inequality follows from a change of coordinates $(\xi,\eta)\mapsto
  (x,t)=(\frac{\xi+\eta}{2},\frac{\xi-\eta}{2})$ that transforms $[0,4\pi]\times [0,2\pi]$ into a superset
  of $[0,2\pi)^2$.  This proves $(CC)_q$.  
\end{proof}

\begin{cor}
  The $q$-control property $(CC)_q$ holds for  $\cA = -\Delta, M= \mathbb{S}^1$ provided that $\inf_\Omega q>0$
  holds with 
  $$
     \Omega\supset  [a_1,b_1]\times [a_2,b_2]
     \quad\text{and } b_1+b_2-a_1-a_2 > 2\pi.
  $$ 
  In particular, $(CC)_q$ holds if $\Omega\supset \omega\times \mathbb{S}^1$ or
  $\Omega\supset \mathbb{S}^1\times \omega $ for some nonempty open set $\omega\subset \mathbb{S}^1$.
\end{cor}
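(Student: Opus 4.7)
The plan is to reduce directly to Theorem~\ref{thm:CCq_for_wave1+1}, for which I must verify the two uniform lower bounds $\inf_{\xi \in [0,2\pi)} |A_\xi| > 0$ and $\inf_{\eta \in [0,2\pi)} |B_\eta| > 0$ under the rectangle hypothesis $\Omega \supset [a_1,b_1]\times [a_2,b_2]$ with $b_1+b_2-a_1-a_2 > 2\pi$. The core geometric ingredient is the arc-intersection lemma on $\mathbb{S}^1$: two closed arcs of lengths $\ell_1$ and $\ell_2$ on a circle of circumference $2\pi$ must overlap in a set of length at least $\ell_1 + \ell_2 - 2\pi$, which follows immediately from the fact that the complement of the first arc has length $2\pi - \ell_1$.

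First I would unfold the definitions of $\tilde{\Omega}$ and $A_\xi$: the inclusion $\Omega \supset [a_1,b_1]\times [a_2,b_2]$ yields that $A_\xi$ contains all $x \in [a_1,b_1]$ for which $-x+\xi$ lies in $[a_2,b_2] \cup [a_2+2\pi, b_2+2\pi]$, the second alternative precisely capturing the wraparound when $-x+\xi$ falls outside $[0,2\pi)$. This is exactly the role of the $(0,2\pi)$-lift in the definition of $\tilde{\Omega}$. Once this reduction is made, $A_\xi$ is seen to contain the intersection on $\mathbb{S}^1$ of the arc $[a_1,b_1]$ with the translated arc $\xi - [a_2,b_2]$; setting $\ell_i := b_i-a_i$, the assumption $\ell_1+\ell_2 > 2\pi$ combined with the arc-intersection lemma gives $|A_\xi| \ge \ell_1+\ell_2-2\pi > 0$ uniformly in $\xi$. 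The same reasoning, with the diagonal $x+t=\xi$ replaced by the antidiagonal $x-t=\eta$, yields the analogous uniform bound for $|B_\eta|$, and then Theorem~\ref{thm:CCq_for_wave1+1} delivers $(CC)_q$.

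For the \emph{in particular} assertion, assume $\Omega \supset \omega \times \mathbb{S}^1$ for a nonempty open $\omega \subset \mathbb{S}^1$. I choose any closed subarc $[a_1,b_1] \subset \omega$ with $b_1 > a_1$ and any closed arc $[a_2,b_2] \subset \mathbb{S}^1$ whose length is so close to $2\pi$ that $b_1-a_1+b_2-a_2 > 2\pi$; such a choice exists since $b_1-a_1 > 0$. Then $\Omega \supset [a_1,b_1]\times [a_2,b_2]$ satisfies the rectangle hypothesis and the first part applies. The case $\Omega \supset \mathbb{S}^1 \times \omega$ is handled symmetrically by switching the roles of $x$ and $t$. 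The main technical point, and the one requiring the most care, is the bookkeeping around the lift $\tilde{\Omega} = \Omega \cup (\Omega + (0,2\pi))$: since only the $t$-direction is duplicated, one has to track how this single duplication realizes the arc-intersection picture for every $\xi \in [0,2\pi)$, especially near the endpoints where the wraparound is active.
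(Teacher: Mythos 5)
Your plan --- reduce to Theorem~\ref{thm:CCq_for_wave1+1} and lower-bound $|A_\xi|$ uniformly --- is the same route as the paper's, and the arc-intersection lemma is a slicker way to package the lower bound than the paper's explicit choice of $j(\xi)\in\{0,1\}$ followed by the estimate $\min\{b_1-a_1,2\pi\eps,b_2-a_2\}$. The deduction of the \emph{in particular} assertions from the rectangle hypothesis is also fine. However, the step you yourself flag as requiring the most care contains a genuine gap: the wraparound you identify goes in the wrong direction.

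You write that $A_\xi$ contains all $x\in[a_1,b_1]$ with $-x+\xi\in[a_2,b_2]\cup[a_2+2\pi,b_2+2\pi]$, the second alternative supposedly capturing the wraparound when $-x+\xi$ leaves $[0,2\pi)$. The set containment itself is correct for the lift $\tilde\Omega=\Omega\cup(\Omega+(0,2\pi))$, but the second alternative is vacuous in the range where you need it: for $\xi\in[0,2\pi)$ and $x\in[a_1,b_1]\subset[0,2\pi)$ one has $-x+\xi\in(-2\pi,2\pi)$, which can never reach $[a_2+2\pi,b_2+2\pi]\subset[2\pi,4\pi)$. The wraparound that actually occurs is $-x+\xi<0$, whose relevant circle-preimage is $-x+\xi\in[a_2-2\pi,b_2-2\pi]$; to promote $(x,-x+\xi+2\pi)\in\Omega$ to $(x,-x+\xi)\in\tilde\Omega$ one would need the \emph{downward} translate $\Omega-(0,2\pi)$ in the lift, not $\Omega+(0,2\pi)$. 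Thus, with the lift as you describe it, $A_\xi$ collapses to $\{x\in[a_1,b_1]:-x+\xi\in[a_2,b_2]\}$, and this set has measure zero for $\xi$ near $0$ (e.g.\ $A_0$ is at most a single point), so the claimed uniform bound $|A_\xi|\geq(b_1-a_1)+(b_2-a_2)-2\pi$ does not hold on all of $[0,2\pi)$. The assertion that $A_\xi$ realizes the $\mathbb{S}^1$-intersection of $[a_1,b_1]$ and $\xi-[a_2,b_2]$ is exactly the thing that needs to be proved, and it hinges on the direction of the one-sided lift; this is the bookkeeping that the paper carries out explicitly through the intervals $[\max\{a_1,\xi-b_2+2\pi j\},\min\{b_1,\xi-a_2+2\pi j\}]$ and the choice of $j(\xi)$, and that your sketch asserts rather than verifies --- and, as stated, asserts with the wrong sign of the shift.
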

\begin{proof}
  We use the criterion from the previous Theorem. We have 
  \begin{align*}
    (x,\xi-x)\in\tilde\Omega
    &\quad\Leftarrow\quad (x,\xi-x+2\pi j) \in [a_1,b_1]\times [a_2,b_2], j\in\{0,1\} \\
    &\quad\Leftrightarrow\quad x\in \bigcup_{j\in\{0,1\}} [\max\{a_1,\xi-b_2+2\pi
    j\},\min\{b_1,\xi-a_2+2\pi j\}].
  \end{align*}
  For all $\xi\in [0,2\pi)$ at least one of these intervals is nondegenerate in view of $b_1+b_2-a_1-a_2 >
  2\pi$. Indeed, the assumption implies that there is a small $\eps>0$ independent of $\xi$ such that 
  $\frac{\xi-a_1-a_2}{2\pi} -\eps$ and $\frac{\xi-b_1-b_2}{2\pi}+\eps$ have distance larger than $1$, i.e., 
  $$
    \frac{b_1+b_2-a_1-a_2}{2\pi} > 1 + 2\eps. 
  $$
  It is then possible to choose, for a given $\xi\in [0,2\pi)$, 
  $$
    j(\xi)\in\{0,1\}
     \quad\text{such that}\quad
     \frac{b_1+b_2-\xi}{2\pi} -\eps > j(\xi) > \frac{a_1+a_2-\xi}{2\pi}+\eps.  
  $$ 
  Note that the assumption implies that the upper bound is always smaller than 2 and the lower bound is always
  bigger than -1, so $j(\xi)\in\{0,1\}$ is not restrictive.  This proves  
  \begin{align*}
    \inf_{\xi\in [0,2\pi)} |A_\xi|
    &\geq \inf_{\xi\in [0,2\pi)} \Big| \bigcup_{j\in\{0,1\}} [\max\{a_1,\xi-b_2+2\pi
    j\},\min\{b_1,\xi-a_2+2\pi j\}]\Big| \\
    &\geq  \inf_{\xi\in [0,2\pi)} \big(\max\{a_1,\xi-b_2+2\pi j(\xi)\}-\min\{b_1,\xi-a_2+2\pi j(\xi)\}\big) \\
    &=  \inf_{\xi\in [0,2\pi)} \min\{b_1-a_1,b_1+b_2-\xi-2\pi j(\xi),\xi-a_1-a_2+2\pi j(\xi),b_2-a_2\} \\
    &\geq  \min\{b_1-a_1,2\pi \eps,b_2-a_2\}  >0
  \end{align*}
 Analogously, one proves $\inf_{\eta\in [0,2\pi)} |B_\eta|>0$,  
  and Theorem~\ref{thm:CCq_for_wave1+1} gives the claim.
\end{proof}

\section{Proofs of
Theorems~\ref{thm-wave-1+1},~\ref{thm-S-1-intro},~\ref{thm-T-N-intro},~\ref{thm-intro-sphere}
and~\ref{thm-intro-sphere-klein-gordon}}
\label{sec:proofs-theor-refthm}

 In this section, we collect all relevant material to prove  
 Theorems~\ref{thm-wave-1+1},~\ref{thm-S-1-intro},~\ref{thm-T-N-intro},~\ref{thm-intro-sphere} and
 \ref{thm-intro-sphere-klein-gordon} from the introduction. For this we first note that it is well known that
 the operators $\cA$ considered in these theorems satisfy assumption $(A)$. Hence, in order to deduce these
theorems from the abstract existence result given in Theorem~\ref{thm-general-hyperbolic-intro}, we only need
to show that the conditions $(CE)_p$ and $(CC)_q$ are also satisfied for suitable $2<p<p^*$ where the
nonnegative function $q\in L^\infty(M\times \mathbb{S}^1)$ and the critical exponent $p^*$ are chosen depending on the
operator. We refer to Table~\ref{tab:results} for an overview.  

\bigskip

In the case of Theorem~\ref{thm-wave-1+1}, the condition $(CE)_p$  is a consequence of
Corollary~\ref{cor-compact-embedding-m-even-S-N} and $(CC)_q$ holds by Theorem~\ref{thm:CCq_for_wave1+1}
thanks to assumption~\eqref{eq:q-con-wave-1+1}. \\[0.2cm]
In the case of Theorem~\ref{thm-S-1-intro}, we first note that
condition $(CE)_p$ is satisfied by Corollary~\ref{cor-compact-embedding-m-even-S-N}, applied in the case
$N=1$. Moreover, condition $(CC)_q$ follows from assumption~(\ref{eq:q-con-S1}) and
Corollary~\ref{cor-summary-CC-q}(iii) and (iv) in the case $N=1$, i.e., $\T^N = \mathbb{S}^1$. This completes the
proof of Theorem~\ref{thm-S-1-intro}.\\[0.2cm] 
In the case of Theorem~\ref{thm-T-N-intro}, the condition $(CE)_p$ is satisfied by
Corollary~\ref{cor-compact-embedding-m-even-S-N}, while condition $(CC)_q$ follows from
assumption~\eqref{eq:q-con-TN} and Corollary~\ref{cor-summary-CC-q}(ii) and (iv). in the case $N=1$, i.e.,
$\T^N = \mathbb{S}^1$. This completes the proof of Theorem~\ref{thm-T-N-intro}.\\[0.2cm] 
In the case of
Theorem~\ref{thm-intro-sphere}, the condition $(CE)_p$ is satisfied by Corollary~\ref{cor-compact-embedding-m-even-S-N},  while condition $(CC)_q$ follows from assumption~(\ref{eq:q-con-SN-intro}) and Corollary~\ref{cor-summary-CC-q}(i).
This completes the proof of Theorem~\ref{thm-intro-sphere}.\\[0.2cm]
In the case of Theorem~\ref{thm-intro-sphere-klein-gordon}, the condition $(CE)_p$ is satisfied by
Corollary~\ref{cor-compact-embedding-N-odd-S-N},  while condition $(CC)_q$ follows from
assumption~(\ref{eq:q-con-SN-intro-1}) and Corollary~\ref{cor-summary-CC-q}(i).
This completes the proof of Theorem~\ref{thm-intro-sphere-klein-gordon}.\\ 

{\bf Acknowledgement:} The authors would like to thank Lorenzo Giaretto for helpful comments on the first version of the paper.  

\bibliographystyle{abbrv}
\bibliography{biblio}

\end{document}